\newtheorem{theorem}{Theorem}[section]
\newtheorem{lemma}[theorem]{Lemma}
\newtheorem{proposition}[theorem]{Proposition}
\theoremstyle{definition}
\newtheorem{definition}[theorem]{Definition}
\newtheorem{remark}[theorem]{Remark}
\newtheorem{example}[theorem]{Example}
\numberwithin{equation}{section}
\newcommand\C{\mathbb{C}}
\newcommand\N{\mathbb{N}}
\newcommand\B{\mathcal{B}}
\newcommand\op{{\operatorname{op}}}
\newcommand\Hom{\operatorname{Hom}}
\newcommand\Aut{\operatorname{Aut}}
\newcommand\id{\operatorname{id}}
\newcommand\Baire{\mathcal{B}a}
\newcommand\AbsMes{\mathbf{AbsMbl}}
\newcommand\ConcMes{\mathbf{CncMbl}}
\newcommand\AbsProb{\mathbf{AbsPrb}}
\newcommand\CH{\mathbf{CH}}
\newcommand\Cat{\mathcal{C}}
\newcommand\CHProb{{\mathbf{CHPrb}}}
\newcommand\Bool{\mathbf{Bool}}
\newcommand\ProbAlg{{\mathbf{PrbAlg}}}
\newcommand\ProbAlgG{{\mathbf{PrbAlg}_\Gamma}}
\newcommand\ProbAlgGK{{\mathbf{PrbAlg}_{\Gamma\times K^\op}}}
\newcommand\SigmaAlg{{\mathbf{Bool}_\sigma}}
\newcommand\Set{\mathbf{Set}}
\newcommand\ConcProb{\mathbf{CncPrb}}
\newcommand\Hilb{\mathbf{Hilb}}
\newcommand\OpHilb{{\mathbf{Hilb}^*}}
\newcommand\CHGroupGen{{\mathbf{CHGrp}}}
\newcommand\CHGroup{{\CHGroupGen_\pi}}
\newcommand\Mes{\mathtt{Alg}}
\newcommand\BaireFunc{\mathtt{Bair}}
\newcommand\Stone{\mathtt{Conc}}
\newcommand\Inc{{\mathtt{Inc}}}
\newcommand\Abs{\mathtt{Abs}}
\newcommand\ident{\mathtt{id}}
\newcommand\Func{\mathtt{Func}}
\newcommand\Vertex{\mathtt{Vertex}}
\newcommand\Cond{\mathtt{Cond}}
\newcommand\CondTilde{{\widetilde{\mathtt{Cond}}}}
\newcommand\Forget{\mathtt{Forget}}
\newcommand\Cast{\mathtt{Cast}}
\newcommand\Inv{\mathtt{Inv}_\Gamma}
\newcommand\Haar{\mathrm{Haar}}
\begin{document}


\baselineskip=17pt


\title{An uncountable Mackey--Zimmer theorem}

\author{Asgar Jamneshan\\
Ko\c{c} University\\
Department of Mathematics\\ 
34450 Istanbul, Turkey  \\
Email: ajamneshan@ku.edu.tr \\
\and 
 Terence Tao\\
 University of California, Los Angeles \\
 Department of Mathematics\\
 Los Angeles,  CA 90095-1555, USA \\
 Email: tao@math.ucla.edu}

\date{}

\maketitle


\renewcommand{\thefootnote}{}

\footnote{2020 \emph{Mathematics Subject Classification}: Primary 37A15; Secondary 37A20.}

\footnote{\emph{Key words and phrases}: Mackey--Zimmer theorem, Mackey theory, ergodic group extensions, uncountable ergodic theory, point-free measure theory.}

\renewcommand{\thefootnote}{\arabic{footnote}}
\setcounter{footnote}{0}


\begin{abstract}
The Mackey--Zimmer theorem classifies ergodic group extensions $X$ of a measure-preserving system $Y$ by a compact group $K$, by showing that such extensions are isomorphic to a group skew-product $X \equiv Y \rtimes_\rho H$ for some closed subgroup $H$ of $K$.  An analogous theorem is also available for ergodic homogeneous extensions $X$ of $Y$, namely that they are isomorphic to a homogeneous skew-product $Y \rtimes_\rho H/M$. These theorems have many uses in ergodic theory, for instance playing a key role in the Host--Kra structural theory of characteristic factors of measure-preserving systems.

The existing proofs of the Mackey--Zimmer theorem require various "countability", "separability", or "metrizability" hypotheses on the group $\Gamma$ that acts on the system, the base space $Y$, and the group $K$ used to perform the extension.  In this paper we generalize the Mackey--Zimmer theorem to "uncountable" settings in which these hypotheses are omitted, at the cost of making the notion of a measure-preserving system and a group extension more abstract. However, this abstraction is partially counteracted by the use of a "canonical model" for abstract measure-preserving systems developed in a companion paper. 
In subsequent work we will apply this theorem to also obtain uncountable versions of the Host--Kra structural theory.
\end{abstract}

\section{Introduction}
\label{intro}

The purpose of this paper is to extend the Mackey--Zimmer theorem classifying ergodic group extensions to the "uncountable" setting in which the group acting is not required to be countable, and the spaces the group is acting on are not required to be separable or metrizable.  

\subsection{The countable Mackey--Zimmer theorem}

In this section we review the classical "countable" theorem of Mackey \cite{mackey} and Zimmer \cite{zimmer1976extension}.  It will be convenient to formulate the theorem here using the language of category theory, in order to utilize some foundational material developed in a companion paper \cite{jt-foundational}.  A review of the category-theoretic notation we employ here can be found in \cite[Appendix A]{jt-foundational}.

We will need a certain amount of notation.  We begin by recalling two categories of probability spaces from our companion paper \cite{jt-foundational}: the familiar category $\ConcProb$ of concrete probability spaces, and the somewhat less familiar category $\ProbAlg$ of   probability algebras.  (Two other categories $\CHProb$, $\AbsProb$ of probability spaces will be introduced later.)
F
\begin{definition}[Probability categories]\
\begin{itemize}
\item[(i)] \cite[Definition 5.1(ii)]{jt-foundational} A \emph{$\ConcProb$-space} (or \emph{concrete probability space}) is a triplet 
$X = (X_\Set, \Sigma_X, \mu_X)$, where $X_\Set$ is a set, $\Sigma_X$ is a $\sigma$-algebra on $X_\Set$, and $\mu_X \colon \Sigma_X \to [0,1]$ is a countably additive probability measure.  A \emph{$\ConcProb$-morphism} $T \colon X \to Y$ between two $\ConcProb$-spaces $X = (X_\Set, \Sigma_X, \mu_X)$, $Y = (Y_\Set,\Sigma_Y, \mu_Y)$ is a measurable map $T_\Set \colon X_\Set \to Y_\Set$ (with associated pullback map $T_\SigmaAlg \colon \Sigma_Y \to \Sigma_X$ defined by $T_\SigmaAlg(E) := T_\Set^{-1}(E)$) such that $T_* \mu_X = \mu_Y$, where $T_* \mu_X$ is the pushforward measure $T_* \mu_X := \mu_X \circ T_\SigmaAlg$.  Composition of $\ConcProb$-morphisms is given by the $\Set$-composition law.
\item[(ii)] \cite[Definition 6.1(vi)]{jt-foundational} A \emph{$\ProbAlg$-space} (or \emph{probability algebra}) is a pair $X = (\Inc(X)_\SigmaAlg, \mu_X)$, where\footnote{Probability algebras are the special case of \emph{measure algebras} in which the total measure is equal to one.  The symbol $\Inc$ denotes the inclusion functor from $\ProbAlg$ to the category $\AbsProb$ of abstract probability spaces, which are similar to  probability algebras but in which non-zero elements $E$ of the $\sigma$-algebra of zero measure are permitted; see Definition \ref{basic-func}.} $\Inc(X)_\SigmaAlg$ is a $\sigma$-complete Boolean algebra, and $\mu_X \colon \Inc(X)_\SigmaAlg \to [0,1]$ is a countably additive (abstract) probability measure such that $\mu_X(E)>0$ whenever $E \in \Inc(X)_\SigmaAlg$ is non-zero.
A \emph{$\ProbAlg$-morphism} $T \colon X \to Y$ between two $\ProbAlg$-spaces $X = (\Inc(X)_\SigmaAlg, \mu_X)$,
$Y = (\Inc(Y)_\SigmaAlg, \mu_Y)$ is a Boolean algebra homomorphism\footnote{By convention, we implicitly define $\ProbAlg$ as an opposite category in which the direction of all morphism arrows are reversed. This is to keep certain functors covariant, where we follow a common convention in category theory that functors are covariant by definition, and a contravariant functor is a synonym for a  functor on the opposite category.} $T^* = \Inc(T)_\SigmaAlg \colon \Inc(Y)_\SigmaAlg \to \Inc(X)_\SigmaAlg$ with the property that $\mu_Y = T_* \mu_X$, where $T_* \mu_X$ is the pushforward measure $T_* \mu_X := \mu_X \circ \Inc(T)_\SigmaAlg$.  $\ProbAlg$-composition is given by the law
$$ \Inc(S \circ T)_\SigmaAlg = \Inc(T)_\SigmaAlg \circ \Inc(S)_\SigmaAlg.$$
\end{itemize}
\end{definition}

\begin{remark} While the definition given above for a $\ProbAlg$-morphisms only requires $\Inc(T)_\SigmaAlg$
 to be Boolean homomorphisms, these homomorphisms in fact automatically preserve countable joins and meets.  To verify this claim, it suffices to show that $\bigwedge_{n \in \N} \Inc(T)_\SigmaAlg(E_n)$ vanishes whenever $E_n$ is a decreasing sequence in $\Inc(Y)_\SigmaAlg$ with $\bigwedge_{n \in \N} E_n = 0$. But from countable additivity we see that $\mu_X( E_n ) \to 0$, hence $\mu_Y( \Inc(T)_\SigmaAlg(E_n) ) \to 0$.  Since the $\Inc(T)_\SigmaAlg(E_n)$ are decreasing, the claim follows. As such, this definition of $\ProbAlg$ agrees with the one given in \cite{jt-foundational}.
\end{remark}

To every $\ConcProb$-space $X = (X_\Set, \Sigma_X, \mu_X)$ one can form an associated $\ProbAlg$-space $X_\ProbAlg = (\Inc(X_\ProbAlg)_\SigmaAlg, \mu_{X_\ProbAlg})$ by declaring $\Inc(X_\ProbAlg)_\SigmaAlg$ to be the $\sigma$-algebra $\Sigma_X$ quotiented by the $\sigma$-ideal of null sets, and defining $\mu_{X_\ProbAlg}$ to be the associated descent of $\mu_X$ to this quotient.  Informally, one can view $X_\ProbAlg$ as an abstraction of $X$ in which the null sets have been "deleted".  Any $\ConcProb$-morphism $T \colon X \to Y$ similarly induces an associated $\ProbAlg$-morphism $T_\ProbAlg \colon X_\ProbAlg \to Y_\ProbAlg$.  As an informal first approximation, $T_\ProbAlg$ is an abstraction of $T$ "up to almost everywhere equivalence", although one should not take this interpretation too literally in "uncountable" settings; see \cite[Example 5.2]{jt19}.  This operation of \emph{casting} $\ConcProb$ to $\ProbAlg$ is an example of what we call a \emph{casting functor}, and in this particular case can be factored as the composition $\Mes \circ \Abs$ of two other casting functors $\Mes, \Abs$, as depicted in the diagram of categories in Figure \ref{fig:categories}.

\begin{figure}
    \centering
    \begin{tikzcd}
    &\CH \arrow[ddl, blue, tail, "\BaireFunc", near end] \arrow[dl, "\CondTilde_Y"'] & \CH_\Gamma \arrow[l, tail,blue] & \CH_{\Gamma \times K^\op} \arrow[l, tail,blue] \\
    \Set  & \CHProb  \arrow[u, tail, blue] \arrow[d, tail, blue] & \CHProb_\Gamma \arrow[u, tail, blue]  \arrow[l, tail,blue] \arrow[d,tail,blue]  & \CHProb_{\Gamma \times K^\op} \arrow[u, tail, blue]  \arrow[l, tail,blue] \arrow[d,tail,blue] \\
     \ConcMes \arrow[d,blue, "\Abs"] \arrow[u,shift left = 1.5 ex, "\Cond_Y"]  \arrow[u, tail, blue] & \ConcProb \arrow[l, tail, blue] \arrow[d, blue, "\Abs"] & \ConcProb_\Gamma \arrow[l, tail, blue] \arrow[d, blue, "\Abs"] & \ConcProb_{\Gamma \times K^\op} \arrow[l, tail, blue] \arrow[d, blue, "\Abs"] \\
     \AbsMes   \arrow[d,dashed, leftrightarrow,blue,"\id"']  & \AbsProb \arrow[l, tail, blue]  \arrow[d, blue, "\Mes"] & \AbsProb_\Gamma \arrow[l, tail, blue]  \arrow[d, blue,  "\Mes"]  & \AbsProb_{\Gamma \times K^\op} \arrow[l,tail, blue]  \arrow[d, blue, "\Mes"]  \\
      \SigmaAlg^\op & \ProbAlg \arrow[u, shift left = 1.5 ex, tail, two heads, "\Inc"] \arrow[uuu, bend right, shift right = 3ex,tail , two heads, "\Stone"' near end] & \ProbAlgG \arrow[l, tail, blue] \arrow[u, shift left = 1.5 ex, tail,two heads, "\Inc"] \arrow[loop, out=240,in=300, looseness = 2, "\Inv"'] \arrow[uuu, bend right, shift right = 3, tail, two heads, "\Stone"' near end] & \ProbAlgGK \arrow[l, tail, blue] \arrow[u, shift left = 1.5 ex, tail,two heads, "\Inc"]\arrow[loop, out=240,in=300, looseness = 2, "\Inv"'] \arrow[uuu, bend right, shift right = 3ex, tail, two heads, "\Stone"' near end] \\
    \end{tikzcd}
    \caption{The primary categories and functors used in this paper (the definitions are reviewed in Section \ref{categories-sec}).  Unlabeled arrows refer to forgetful functors. Arrows with tails are faithful functors and arrows with two heads in one direction are full.  The diagram is not fully commutative (even modulo natural isomorphisms), but the functors in blue form a commuting subdiagram and will be used for casting operators $\Cast_{\Cat \to \Cat'}$ (see Definition \ref{cast}).  These conventions will be in force in all other diagrams of categories in this paper.}
    \label{fig:categories}
\end{figure} 

A key notion here will be that of \emph{isomorphism in $\ProbAlg$}: two $\ConcProb$-spaces $X,Y$ are isomorphic in $\ProbAlg$ if there is a $\ProbAlg$-isomorphism $T \colon X_\ProbAlg \to Y_\ProbAlg$ between their associated $\ProbAlg$-spaces $X_\ProbAlg$, $ Y_\ProbAlg$.  As an informal first approximation, isomorphism in $\ProbAlg$ asserts that $X$ and $Y$ are "equivalent modulo null sets", and is a strictly weaker notion than $\ConcProb$-isomorphism.  For instance, a $\ConcProb$-space is always isomorphic in $\ProbAlg$ to its measure-theoretic completion, but this completion need not be $\ConcProb$-isomorphic to the original space in general.

Now we add dynamics to these categories, using the following general construction, which also recently appeared implicitly in \cite{moriakov}.

\begin{definition}[Dynamical categories]\label{dynamic} Let $\Cat$ be a category (e.g., $\Cat=\ConcProb, \ProbAlg$), and let $\Gamma$ be a group.  We define the category $\Cat_\Gamma$ as follows:
\begin{itemize}
    \item[(i)]  A $\Cat_\Gamma$-object (or \emph{$\Cat_\Gamma$-system}) is a pair $X = (X_\Cat,T_X)$, where $X_\Cat$ is a $\Cat$-space and $T_X \colon \gamma \mapsto T_X^\gamma$ is a group homomorphism from $\Gamma$ to the automorphism group $\Aut_\Cat(X_\Cat)$ of $\Cat$-isomorphisms of $X_\Cat$.  We refer to $T_X$ as a  \emph{$\Cat$-action} of $\Gamma$ on $X$.  
    \item[(ii)] A $\Cat_\Gamma$-morphism $\pi \colon X \to Y$ from one $\Cat_\Gamma$-system $X=(X_\Cat,T_X)$ to another $Y=(Y_\Cat,T_Y)$ is a $\Cat$-morphism $\pi_\Cat \colon X \to Y$ such that the diagram
\begin{center}
    \begin{tikzcd}
    X_\Cat \arrow[r, "T_X^\gamma"] \arrow[d, "\pi_\Cat"] & X_\Cat \arrow[d, "\pi_\Cat"] \\
    Y_\Cat \arrow[r, "T_Y^\gamma"] & Y_\Cat
\end{tikzcd}
\end{center}
commutes in $\Cat$ for all $\gamma \in \Gamma$.
    \item[(iii)] Any functor $\Func \colon \Cat \to \Cat'$ induces a corresponding functor $\Func = \Func_\Gamma \colon \Cat_\Gamma \to \Cat'_\Gamma$ by mapping a $\Cat_\Gamma$-system $(X_\Cat,T_X)$ to $(\Func(X_\Cat), \Func(T_X))$, and mapping a $\Cat_\Gamma$-morphism $\pi \colon X \to Y$ to $\Func(\pi_\Cat)$ (which can be easily seen to be promoted from a $\Cat'$-morphism to a $\Cat'_\Gamma$-morphism).   
\end{itemize}
\end{definition}

A $\ConcProb_\Gamma$-system is also known as a \emph{(concrete) measure-preserving system} for the group $\Gamma$; a $\ProbAlgG$ is informally an abstraction of this concept in which all null sets have been "removed".  The casting functor $\Mes \circ \Abs$ from $\ConcProb$ to $\ProbAlg$ induces a casting functor $\Mes \circ \Abs = \Mes_\Gamma \circ \Abs_\Gamma$ from $\ConcProb_\Gamma$ to $\ProbAlgG$, associating a $\ProbAlgG$-system $X_{\ProbAlgG}$ to each $\ConcProb_\Gamma$-system $X$.  Then, as before, we can call two $\ConcProb_\Gamma$-systems \emph{isomorphic in $\ProbAlgG$} if there is a $\ProbAlgG$-isomorphism between the associated $\ProbAlgG$-systems.  Informally, this means that the two systems "agree up to null sets", with the caveat (which is important when the group $\Gamma$ is uncountable) that the null sets where the actions disagree are permitted to vary with the choice of group element.

Given a $\ConcProb_\Gamma$-system $Y$, a \emph{$\ConcProb_\Gamma$-extension} of $Y$ is a pair $(X,\pi)$, where $X$ is a $\ConcProb_\Gamma$-system $X$ and $\pi \colon X \to Y$ is a $\ConcProb_\Gamma$-morphism; we also call $(Y,\pi)$ a \emph{$\ConcProb_\Gamma$-factor} of $X$.  In both cases we refer to $\pi$ as the \emph{factor map}.  The collection $(\ConcProb_\Gamma \downarrow Y)$ of $\ConcProb_\Gamma$-extensions of $Y$ also forms a category, as part of the general construction of slice categories $(\Cat \downarrow D)$ of a category $\Cat$ over a $\Cat$-object; see \cite[Definition A.5]{jt-foundational}; a $(\ConcProb_\Gamma \downarrow Y)$-morphism $L$ from one $(\ConcProb_\Gamma \downarrow Y)$-space $(X,\pi)$ to another $(X',\pi')$ is then a $\ConcProb_\Gamma$-morphism $L \colon X \to X'$ such that $\pi' \circ L = \pi$. The collection $(X \downarrow \ConcProb_\Gamma)$ of $\ConcProb_\Gamma$-factors of $X$ is similarly a category (a special case of the coslice category in  \cite[Definition A.5]{jt-foundational}).

Given a $\ProbAlgG$-system $Y$, one can similarly define the category $(\ProbAlgG \downarrow Y)$ of 
$\ProbAlgG$-extensions of $Y$.  If $Y$ is a $\ConcProb_\Gamma$-system, there is an obvious casting functor from $(\ConcProb_\Gamma \downarrow Y)$ to $(\ProbAlgG \downarrow Y_{\ProbAlgG})$, and one can then define the notion of two $\ConcProb_\Gamma$-extensions in $(\ConcProb_\Gamma \downarrow Y)$ being isomorphic in $(\ProbAlgG \downarrow Y_{\ProbAlgG})$.  Again, this captures the informal notion of the two extensions being "equivalent up to null sets".  Similarly for factors instead of extensions.

\begin{remark}  $\ProbAlgG$-morphisms are essentially the same concept as "homomorphisms of measure preserving systems" in \cite[Definition 5.4]{furstenberg2014recurrence}, \cite[Definition 12.7]{EFHN} or "morphisms of measure-preserving dynamical systems" in \cite[\S 2.2]{glasner2015ergodic}, though in those references one restricts attention to the "countable" setting in which the $\sigma$-complete Boolean algebra is countably generated.  One additional minor difference is that in these references the ambient space is a $\ConcProb$-space (equipped with an action of $\Gamma$ on the corresponding $\ProbAlg$-space), rather than a $\ProbAlg$-space, but this difference is not of major relevance in applications since every $\ProbAlg$-space has at least one $\ConcProb$-space model.  See also \cite[Ch. VI]{DNP} where the notion of a $\ProbAlg$-isomorphism is essentially introduced (with the same technical difference as mentioned previously), and the observation made that such isomorphisms can also be viewed as  Banach lattice isomorphisms (or Markov isomorphisms) of the corresponding $L^1$ spaces.  See the final subsection of \cite[\S 12.3]{EFHN} for further discussion comparing the concrete and abstract approaches to ergodic theory.  One can also enlarge the category $\ProbAlg$ by replacing the class of $\ProbAlg$-morphisms with the more general class of Markov operators, which is the abstract analogue of the class of probability kernels on $\ConcProb$-spaces.  These categories are studied in \cite[Ch. 13]{EFHN} and \cite{voevodsky} respectively.  However, we will not need these larger categories in our current work.
\end{remark}

There is an \emph{invariant factor functor} $\Inv \colon \ProbAlgG \to \ProbAlgG$ that takes an  probability algebra system $X = (X_\ProbAlg, T_X)$ and returns its invariant factor; see Definition \ref{inv-def} for a precise definition.  A $\ProbAlgG$-system $X$ is said to be \emph{ergodic} if $\Inv(X)$ is trivial, and a $\ConcProb_\Gamma$-system $X$ is \emph{ergodic} if the associated $\ProbAlgG$-system $X_{\ProbAlgG}$ is ergodic.

The Mackey--Zimmer theorem concerns two key ways to extend a given $\ConcProb_\Gamma$-system $Y$:

\begin{definition}[Concrete skew-products and group extensions]\label{concrete-ext}  Let $Y = (Y_\ConcProb, T_Y)$ be a $\ConcProb_\Gamma$-system, let $K$ be a compact Hausdorff group\footnote{One could of course organize the compact Hausdorff groups into a category if desired.  In fact there are \emph{two} natural categories one could use here: the category $\CHGroupGen$ of compact Hausdorff groups with morphisms that are continuous homomorphisms, and the subcategory $\CHGroup$ in which the morphisms are also required to be surjective.  The latter category $\CHGroup$ interacts well with Haar measure (surjective continuous homomorphisms preserve Haar measure), allowing one to interpret the Haar measure construction as a functor from $\CHGroup$ to $\CHProb$.  However we will not need to extensively use the category theoretic properties of compact Hausdorff groups in this paper.}, and let $L$ be a compact subgroup of $K$.  We endow $K$ (resp. $K/L$) with the Baire $\sigma$-algebra\footnote{The Baire algebra of a compact Hausdorff space $K$ is the $\sigma$-algebra generated by the space $C(K)$ of continuous functions of $K$ into $\C$; equivalently, it is the $\sigma$-algebra generated by compact $G_\delta$ subsets of $K$.  For metrizable $K$ the Baire algebra agrees with the Borel algebra, but for non-metrizable $K$ the Baire algebra can be strictly smaller.  See \cite{jt-foundational} for more discussion of why the Baire algebra is preferred over the Borel algebra in uncountable analysis.}, as well as the bi-invariant Haar probability measure $\Haar_K$ (resp. the left-invariant Haar probability measure $\Haar_{K/L}$). 
\begin{itemize}
    \item[(i)]  A \emph{$K$-valued $\ConcProb_\Gamma$-cocycle} on $Y$ is a tuple $\rho = (\rho_\gamma)_{\gamma \in \Gamma}$ of measurable maps $\rho_\gamma \colon Y_\ConcProb \to K$ that obeys the \emph{cocycle equation}
    \begin{equation}\label{cocycle-eq} \rho_{\gamma  \gamma'}(y) = \rho_\gamma(T_Y^{\gamma'} y) \rho_{\gamma'}(y)
    \end{equation}
    for all $\gamma,\gamma' \in \Gamma$ and $y \in Y_\Set$.
    \item[(ii)] If $\rho$ is a $K$-valued $\ConcProb_\Gamma$-cocycle, we define the \emph{$\ConcProb_\Gamma$-homogeneous skew-product} 
    $$Y \rtimes^{\ConcProb_\Gamma}_\rho K/L = Y \rtimes_\rho K/L = (X, \pi) \in (\ConcProb_\Gamma \downarrow Y)$$
    by taking the product space $X_\ConcProb = Y_\ConcProb \times^{\ConcProb} K/L$ (using the standard product construction in $\ConcProb$), and defining the action $T_X$ on $X_\ConcProb$ by the formula
    $$ T_X^\gamma(y,kL) := (T_Y^\gamma(y), \rho_\gamma(y) kL).$$
    for $(y,kL) \in X_\Set$, with the projection map $\pi = \pi_{X \to Y} \colon X \to Y$ defined by $\pi(y,kL) = y$ for $(y,kL) \in X_\Set$.  If $L=\{1\}$, we refer to $Y \rtimes^{\ConcProb_\Gamma} K$ as a \emph{$\ConcProb_\Gamma$-group skew-product}.
    \item[(iii)]  A \emph{$\ConcProb_\Gamma$-homogeneous extension} of $Y$ by $K/L$ is a tuple $X = (X, \pi, \theta, \rho)$, where $(X, \pi)$ is an extension of $Y$, the \emph{vertical coordinate} $\theta \colon X_\ConcMes \to K/L$ is a measurable  function such that $\pi, \theta$ jointly generate the $\sigma$-algebra $\Sigma_X$ of $X$, and $\rho = (\rho_\gamma)_{\gamma \in \Gamma}$ is a $K$-valued $\ConcProb_\Gamma$-cocycle on $Y$ such that
    \begin{equation}\label{theta-gam-concrete}
    \theta(T_X^\gamma x) = \rho_\gamma(\pi(x)) \theta(x)
    \end{equation}
    for all $x \in X_\Set$ and $\gamma \in \Gamma$, using of course the left action of $K$ on $K/L$.  If $L=\{1\}$, we refer to $X$ as a \emph{$\ConcProb_\Gamma$-group extension}.  We sometimes write $\pi = \pi_{X \to Y}$ when we need to emphasize the domain and codomain of $\pi$.
\end{itemize}
\end{definition}

Homogeneous extensions are very closely related to the notions of \emph{isometric extensions} and \emph{compact extensions}, which play a fundamental role in the Furstenberg--Zimmer structure theory \cite{furstenberg2014recurrence, zimmer1976ergodic, zimmer1976extension} of measure-preserving systems, as well as subsequent refinements of that theory, such as is found in the work of Host and Kra \cite{host2005nonconventional}.  See \cite{jamneshan2019fz} for a discussion of these relationships in both the countable and uncountable settings.  It is common in the literature to reduce to the \emph{corefree} case when $\bigcap_{k \in K} kLk^{-1} = \{1\}$, by quotienting out by the normal core $\bigcap_{k \in K} kLk^{-1}$, but we will not need to use the corefree property here.

One can easily check using the cocycle equation \eqref{cocycle-eq} (and the Fubini--Tonelli theorem and invariance of Haar measure) that if $\rho$ is a $K$-valued $\ConcProb_\Gamma$-cocycle, then the  $\ConcProb_\Gamma$-homogeneous skew-product $Y \rtimes^{\ConcProb_\Gamma}_\rho K/L$ is indeed a $\ConcProb_\Gamma$-extension of $Y$, and is in fact a $\ConcProb_\Gamma$-homogeneous extension with the vertical coordinate $\theta(y,kL) := kL$.

The Mackey--Zimmer theorem \cite{mackey, zimmer1976extension} asserts a partial converse to this implication, namely that (under some countability hypotheses), an \emph{ergodic} $\ConcProb_\Gamma$-homogeneous extension of $Y$ is isomorphic (in $(\ProbAlgG \downarrow Y_{\ProbAlgG})$) to a $\ConcProb_\Gamma$-homogeneous skew-product, possibly after passing from $K$ to a subgroup.  The following formulation of the theorem is essentially in \cite{glasner2015ergodic}:

\begin{theorem}[Countable Mackey--Zimmer theorem]\label{mackey-countable}  Let $\Gamma$ be a group, let $Y$ be an ergodic $\ConcProb_\Gamma$-system, and let $K$ be a compact Hausdorff group.  Assume the following additional hypotheses:
\begin{itemize}
    \item[(a)] $\Gamma$ is at most countable.
    \item[(b)] $Y$ is a standard Lebesgue space (a standard Borel space equipped with a regular probability measure).
    \item[(c)] $K$ is metrizable.
\end{itemize}
Then
\begin{itemize}
    \item[(i)] Every ergodic $\ConcProb_\Gamma$-group extension $X$ of $Y$ by $K$ is isomorphic in $(\ProbAlgG \downarrow Y_{\ProbAlg})$ to a $\ConcProb_\Gamma$-group skew-product $Y \rtimes^{\ConcProb_\Gamma}_\rho H$ for some compact subgroup $H$ of $K$ and some $H$-valued $\ConcProb_\Gamma$-cocycle $\rho$.
    \item[(ii)] Every ergodic $\ConcProb_\Gamma$-homogeneous extension $X$ of $Y$ by $K/L$ for some compact subgroup $L$ of $K$ is isomorphic in $(\ProbAlgG \downarrow Y_{\ProbAlg})$ to a $\ConcProb_\Gamma$-homogeneous skew-product $Y \rtimes^{\ConcProb_\Gamma}_\rho H/M$ for some compact subgroup $H$  of $K$, some compact subgroup $M$ of $H$, and some $H$-valued $\ConcProb_\Gamma$-cocycle $\rho$.  
\end{itemize}
\end{theorem}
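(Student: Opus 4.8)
The plan is to prove the homogeneous case (ii) directly, obtaining the group extension case (i) as the special case $L=\{1\}$ (where the subgroup $M$ below becomes trivial). First I would pass to a concrete model: since $\pi$ and $\theta$ jointly generate $\Sigma_X$, the map $x \mapsto (\pi(x),\theta(x))$ identifies $X$ (in $\ProbAlgG$) with $Y \times K/L$ carrying some measure $\lambda$ projecting to $\mu_Y$, on which $T_X^\gamma$ acts by $(y,kL)\mapsto (T_Y^\gamma y, \rho_\gamma(y)\, kL)$. Using that $Y$ is a standard Lebesgue space and $K/L$ is compact metrizable, I disintegrate $\lambda = \int_Y \lambda_y\, d\mu_Y(y)$ into fibre measures $\lambda_y \in P(K/L)$. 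Invariance of $\mu_X$ under $T_X^\gamma$ then translates into the equivariance
\[ \lambda_{T_Y^\gamma y} = (\rho_\gamma(y))_* \lambda_y \qquad \text{for a.e. } y,\ \text{all }\gamma\in\Gamma, \]
where $(\rho_\gamma(y))_*$ denotes pushforward under left translation by $\rho_\gamma(y)$ on $K/L$; here the countability of $\Gamma$ is what lets me intersect the conull sets over all $\gamma$.

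The argument then runs on two applications of ergodicity. Because $K$ is compact, its continuous action on the Polish space $P(K/L)$ has closed (hence locally closed) orbits, so the action is smooth and the orbit space $P(K/L)/K$ is standard Borel. The map $y \mapsto [\lambda_y]$ into this orbit space is $\Gamma$-invariant by the equivariance above, so ergodicity of $Y$ forces it to be essentially constant: there is a fixed $\lambda_0 \in P(K/L)$ and a measurable transfer map $g\colon Y \to K$ with $\lambda_y = (g(y))_*\lambda_0$. Replacing the vertical coordinate $\theta$ by $g(\pi(\cdot))^{-1}\theta$ (a $(\ProbAlgG\downarrow Y_{\ProbAlg})$-isomorphism) normalizes all fibre measures to the single $\lambda_0$ and replaces $\rho$ by the cohomologous cocycle $\rho'_\gamma(y) = g(T_Y^\gamma y)^{-1}\rho_\gamma(y)g(y)$; the equivariance now reads $(\rho'_\gamma(y))_*\lambda_0 = \lambda_0$, so $\rho'$ takes values in the closed stabilizer subgroup $H := \{h \in K : (h)_*\lambda_0 = \lambda_0\}$. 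Since $\lambda_0$ is $H$-invariant, it is supported on $H$-orbits in $K/L$ (double cosets $HkL$), and the map $(y,kL)\mapsto HkL \in H\backslash K/L$ is now $\Gamma$-invariant because $\rho'_\gamma(y)\in H$ preserves each $H$-orbit. Ergodicity of $X$ then forces this to be constant, so $\lambda_0$ is the $H$-invariant probability measure on a single orbit $Hk_0 L$. Identifying this orbit $H$-equivariantly with $H/M$, where $M := H \cap k_0 L k_0^{-1}$ is the stabilizer of $k_0 L$ in $H$, carries $\lambda_0$ to $\Haar_{H/M}$ and exhibits $X$ as isomorphic in $(\ProbAlgG \downarrow Y_{\ProbAlg})$ to the homogeneous skew-product $Y \rtimes_{\rho'} H/M$.

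The main obstacle --- and the reason for hypotheses (a)--(c) --- is entirely measure-theoretic rather than algebraic: producing the disintegration $y\mapsto\lambda_y$ and, above all, the measurable transfer function $g$. The existence of $g$ rests on the smoothness of the compact-group action on $P(K/L)$ together with a measurable-selection theorem, both of which require the standard Borel / Polish structure supplied by $Y$ being standard Lebesgue and $K$ being metrizable; countability of $\Gamma$ is used to handle the ``for all $\gamma$'' quantifier inside almost-everywhere statements. The algebraic skeleton (constancy of the orbit type under ergodicity of $Y$, reduction of the cocycle to the stabilizer $H$, and collapse to a single orbit under ergodicity of $X$) is robust, but it is precisely this selection-and-disintegration layer that the uncountable generalization must replace, motivating the point-free canonical-model machinery of the companion paper.
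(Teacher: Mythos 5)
Your argument is correct, and it is essentially the classical disintegration proof to which the paper's stated proof of this theorem merely points (the citations to Glasner, Theorems 3.25--3.26); but it is genuinely different from the self-contained route the paper actually develops, namely deducing Theorem \ref{mackey-countable} from the uncountable Theorem \ref{mackey-uncountable} in Section \ref{implication}. You locate the Mackey range $H$ as the stabilizer of a normalized fibre measure $\lambda_0$ obtained by disintegrating $\mu_X$ over $Y$, produce the transfer function $g$ by measurable selection for the smooth $K$-action on the space of probability measures on $K/L$, prove (ii) directly, and recover (i) as the case $L=\{1\}$. The paper instead proves (i) first --- finding $H$ from the $K^{\op}$-action on the invariant factor $\Inv(Y\rtimes_\rho K)$ of the product system $X\rtimes_1 K \to Y\rtimes_\rho K$, classifying $K^{\op}$-factors of $K$ as $H\backslash K$ by convolution and Stone--Weierstrass, replacing measurable selection by Gleason's projectivity of the extremally disconnected canonical model, and replacing disintegration by the Fubini computation of Lemma \ref{integral-comp} --- and then obtains (ii) from (i) by first promoting the homogeneous extension to an \emph{ergodic} group extension via Krein--Milman (Theorem \ref{ext-thm}). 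Your route buys brevity and concreteness but genuinely consumes hypotheses (a)--(c), exactly as you diagnose; the paper's route buys independence from disintegration and selection, which is what permits the uncountable generalization and, as Section \ref{implication} notes, shows that (a) and (b) can in fact be dropped. One small point to tidy: after conjugating by $g$, the cocycle $\rho'_\gamma$ lands in $H$ only for a.e.\ $y$, so to get a genuine $H$-valued cocycle you should restrict to the set $\{y:\rho'_\gamma(y)\in H\ \forall\gamma\}$ (conull by countability of $\Gamma$, and exactly invariant by the cocycle identity) and redefine $\rho'$ to be the identity off it; likewise the final identification of the orbit $Hk_0L$ with $H/M$ sends $\lambda_0$ to $\Haar_{H/M}$ by uniqueness of the $H$-invariant probability measure on a compact homogeneous space, which deserves an explicit word.
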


\begin{proof}
A proof of (i) can be found in \cite[Theorem 3.25(5)]{glasner2015ergodic}, while a proof of (ii) can be found in \cite[Theorem 3.26]{glasner2015ergodic}. 
\end{proof}

We refer to this theorem as a "countable" theorem because of the hypotheses (a), (b), (c) which place countability (or separability) type axioms on the data $\Gamma, Y, K$.  The objective of this paper is to eliminate these countability hypotheses from Theorem \ref{mackey-countable}, in order to make the theory applicable to "uncountable" settings in which $\Gamma$ could for instance be an ultraproduct of groups, $Y$ could be a Loeb space, and $K$ an uncountable product of compact groups.  A similar elimination was achieved by us in \cite{jt19} for the Moore--Schmidt theorem regarding the cohomology of cocycles.  In that result it was necessary to formulate the result exclusively in the abstract framework, and the same phenomenon occurs here.  More precisely, the main result of this paper is

\begin{theorem}[Uncountable Mackey--Zimmer theorem]\label{mackey-uncountable}  Let $\Gamma$ be a group, $Y$ be an ergodic $\ProbAlgG$-system, and $K$ be a compact Haudorff group.
\begin{itemize}
    \item[(i)] Every ergodic $\ProbAlgG$-group extension $X$ of $Y$ by $K$ is isomorphic in $(\ProbAlgG \downarrow Y)$ to a $\ProbAlgG$-group skew-product $Y \rtimes^{\ProbAlgG}_\rho H$ for some compact subgroup $H$ of $K$ and some $H$-valued $\ProbAlgG$-cocycle $\rho$. 
    \item[(ii)] Every ergodic $\ProbAlgG$-homogeneous extension $X$ of $Y$ by $K/L$ for some compact subgroup $L$ of $K$ is isomorphic in $(\ProbAlgG \downarrow Y)$ to a $\ProbAlgG$-homogeneous skew-product $Y \rtimes^{\ProbAlgG}_\rho H/M$ for some compact subgroup $H$ of $K$, some compact subgroup $M$ of $H$, and some $H$-valued $\ProbAlgG$-cocycle $\rho$.  
\end{itemize}
\end{theorem}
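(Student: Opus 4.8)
The plan is to reduce both parts to the classical structure of the \emph{Mackey group}, transported into the abstract setting by means of the canonical model of the companion paper \cite{jt-foundational}; I concentrate on part (i) and indicate at the end how (ii) follows. First I would unpack the hypothesis. By joint generation of $\pi$ and the vertical coordinate $\theta \colon X \to K$, together with the equivariance relation $\theta \circ T_X^\gamma = (\rho_\gamma \circ \pi)\,\theta$, the system $X$ is abstractly isomorphic in $\ProbAlgG$ to a system carried by the probability algebra of $Y \times K$, on which $\Gamma$ acts by the skew-product law $(y,k)\mapsto (T_Y^\gamma y,\rho_\gamma(y)k)$ and which is equipped with some invariant measure $\mu$ lying over $\mu_Y$; crucially the fibre measure need \emph{not} be Haar, and this is exactly the defect that the theorem must repair. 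The decisive observation is that this skew-product action commutes with the \emph{right} translation action $R_a\colon (y,k)\mapsto(y,ka)$ of $K$, since $\rho$ enters on the left. Consequently each pushforward $(R_a)_*\mu$ is again $\Gamma$-invariant, and is ergodic because $R_a$ is an automorphism of the system. I would then define the \emph{Mackey group}
\[
H := \{\, a \in K : (R_a)_*\mu = \mu \,\},
\]
which is readily checked to be a subgroup of $K$.

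To run the measure-theoretic part rigorously without countability or metrizability hypotheses, I would pass to the canonical model: realise $Y$ as a concrete $\CHProb_\Gamma$-system on a compact Hausdorff space and realise $\rho$, $\theta$ and $\mu$ concretely, so that the cocycle equation \eqref{cocycle-eq} and the coordinate relation \eqref{theta-gam-concrete} hold \emph{everywhere} and $\mu$ is a Radon (Baire-regular) probability measure. On this model the map $a\mapsto (R_a)_*\mu$ is weak-$*$ continuous, so $H$, being the stabiliser of the point $\mu$ under the continuous $K$-action on the space of Radon probability measures, is \emph{closed}. Ergodicity now yields a dichotomy: for $a,a'\in K$ the ergodic measures $(R_a)_*\mu$ and $(R_{a'})_*\mu$ are either equal or mutually singular, and they are equal precisely when $a$ and $a'$ determine the same coset of $H$; thus $a\mapsto (R_a)_*\mu$ descends to an embedding of the coset space of $H$ into the ergodic measures lying over $\mu_Y$.

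Next I would disintegrate $\mu = \int_Y \mu_y\, d\mu_Y(y)$ over the factor $Y$, using the disintegration theory available for Radon measures on the canonical model. Right $H$-invariance of $\mu$ forces $\mu_y$ to be invariant under right translation by $H$ for almost every $y$, and the singularity dichotomy forces the support of $\mu_y$ to be a single coset; by uniqueness of Haar measure $\mu_y$ is then the natural measure on a coset $\phi(y)H$. Choosing a measurable section $\phi\colon Y\to K$ of $y\mapsto \phi(y)H$ and conjugating by $(y,k)\mapsto(y,\phi(y)^{-1}k)$, I would verify that the reduced cochain $\rho'_\gamma(y):=\phi(T_Y^\gamma y)^{-1}\rho_\gamma(y)\phi(y)$ takes values in $H$, is a genuine $H$-valued cocycle, and that the conjugation carries $(\mu,\rho)$ to $(\mu_Y\otimes\Haar_H,\rho')$. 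Casting this concrete isomorphism back through the functors of Figure \ref{fig:categories} yields the desired isomorphism $X \cong Y\rtimes^{\ProbAlgG}_{\rho'}H$ in $(\ProbAlgG\downarrow Y)$. Part (ii) follows by repeating the argument with $K/L$ in place of $K$, the relevant right action then being by the subgroup of $K$ stabilising the base coset and producing a homogeneous skew-product $Y\rtimes_{\rho'}H/M$; alternatively one first applies (i) to the group extension $Y\rtimes_\rho K$ sitting above the homogeneous extension and then projects along $K\to K/L$.

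The main obstacle is the final, measure-theoretic step: producing the fibre disintegration, the singularity dichotomy, and above all a \emph{measurable section} $\phi\colon Y\to K$ of $K\to K/H$, in a setting where $K$ need not be metrizable and $Y$ need not be a standard Lebesgue space, so that the classical selection and ergodic-decomposition theorems do not apply. The resolution is to perform all of these operations on the canonical model, where the underlying space is compact Hausdorff with a Baire-regular measure for which disintegration is available, and to exploit the everywhere (rather than almost-everywhere) validity of \eqref{cocycle-eq} and \eqref{theta-gam-concrete} on that model, together with the uncountable Moore--Schmidt theorem \cite{jt19}, to manipulate the $K$-valued cocycle. Closedness of the Mackey group and the selection of $\phi$ are the technical linchpins on which the whole conjugation step rests.
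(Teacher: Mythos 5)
Your overall architecture---realise $X$ on $Y\times K$ with an invariant measure $\mu$ over $\mu_Y$, define the Mackey group $H$ as the stabiliser of $\mu$ under right translation, show the fibres of $\mu$ over $Y$ are Haar measures on single cosets of $H$, then conjugate by a section---is the classical Choquet-theoretic route, and it is genuinely different from the paper's, which never disintegrates $\mu$ at all. The gap is the sentence ``the singularity dichotomy forces the support of $\mu_y$ to be a single coset.'' Nothing you have established implies this: the dichotomy (distinct ergodic measures are mutually singular) is a statement about the global measures $(R_a)_*\mu$ and says nothing about the disintegration of any one of them over $Y$. The classical way to pass from the stabiliser $H$ to the coset structure of the fibres is to show that $\mu_Y\times\Haar_K=\int_K (R_a)_*\mu\,d\Haar_K(a)$ is the \emph{ergodic decomposition} of the product measure, so that the invariant factor of $Y\rtimes_\rho K$ is $H\backslash K$ and the conditional measures relative to it are exactly the translates of $\mu$; this is the Keynes--Newton theorem, and its proof rests on ergodic decomposition and Choquet theory, which are precisely the tools that fail for uncountable, non-amenable $\Gamma$. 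The paper routes around this by identifying $\Inv(Y\rtimes_\rho K)$ as a $K^\op$-factor of $K$ and classifying such factors by a Stone--Weierstra{\ss}/convolution argument (Lemma \ref{piw}), then verifying the product structure of the measure by a direct Fubini computation against test functions (Lemma \ref{integral-comp}) instead of via fibre measures. Until you supply an uncountable substitute for the fibre-structure step, the proof does not close. A secondary problem in the same step: ``right $H$-invariance of $\mu$ forces $\mu_y$ to be $H$-invariant for a.e.\ $y$'' requires intersecting exceptional null sets over all $h\in H$, an uncountable union; this particular point is repairable by averaging the disintegration against $\Haar_H$ and invoking a.e.-uniqueness once (in the spirit of Lemma \ref{trans}), but it must be said.

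Two smaller issues. For part (ii), ``repeating the argument with $K/L$ in place of $K$'' is not available, since $K/L$ is not a group and carries no right translation action of itself on which to base the stabiliser construction; and the alternative of applying (i) to a group extension sitting above $X$ requires first \emph{constructing} such an extension and, crucially, making it \emph{ergodic}---the natural lift is generally not ergodic, and the paper needs a Riesz representation plus Krein--Milman extreme-point argument for this (Theorem \ref{ext-thm}). Finally, the lift $\phi$ of $y\mapsto\phi(y)H$ through $K\to K/H$ should come from Gleason's theorem, i.e.\ the projectivity of the extremally disconnected space $\Stone(Y)$ in $\CH$ (Lemma \ref{surj} and Theorem \ref{lift}), not from the Moore--Schmidt theorem of \cite{jt19}, which concerns coboundaries of cocycles into \emph{abelian} groups and provides no selection statement for the non-abelian quotient $K\to K/H$.
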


The notions of $\ProbAlgG$-group skew-products,
$\ProbAlgG$-homogeneous skew-products, $\ProbAlgG$-group extensions, and
$\ProbAlgG$-homogeneous extensions used in the above theorem will be defined formally in Definition \ref{abstract-ext}; for now, it will suffice to say that they are the natural analogues of their concrete counterparts in Definition \ref{concrete-ext}.

The implication of Theorem \ref{mackey-countable} from Theorem \ref{mackey-uncountable} is almost immediate, but requires a verification that the abstraction of concrete skew-products does not depend on the choice of model.  We give the details of this implication in Section \ref{implication}.  We remark that one additional advantage of the abstract formulation in Theorem \ref{mackey-uncountable} is that it also applies to \emph{near-cocycles} (which only obey the cocycle equation almost everywhere, rather than everywhere), which at the concrete level only generate \emph{near-actions} (in the sense of Zimmer \cite{zimmer1976ergodic}) rather than genuine actions; see for instance our recent article \cite{jt21} for an application of this variant of the Mackey--Zimmer theorem.

Part (i) of Theorem \ref{mackey-uncountable} was also previously established by Ellis \cite{ellis}; see Appendix \ref{appendix} for a proof of the equivalence of Ellis's results and part (i) of Theorem \ref{mackey-uncountable}. 
While there are many common elements between both proofs (most notably the reliance on canonical models, as well as a lifting lemma of Gleason \cite{gleason}), the arguments in \cite{ellis} are mostly from topological dynamics, whereas our approach is more ergodic theoretic in nature.  For instance, in \cite{ellis} the Mackey range $H$ is located via a Zorn's lemma argument based on locating a minimal subflow of a topological group extension associated to $X$, whereas in our arguments the Mackey range is described in more explicit ergodic-theoretic terms as the stabilizer of the $K^{\op}$-action on the $\Gamma$-invariant factor of $Y \rtimes_\rho K$.  In a similar vein, to establish that the measure on $X$ agrees (after suitable changes of variable) with product measure on $Y \times H$, the arguments in \cite{ellis} rely crucially on a non-trivial theorem of Keynes and Newton \cite{keynes-newton} describing the Choquet theory of invariant measures on compact group extensions, while our argument proceeds by a calculation (see Lemma \ref{integral-comp}) based on Fubini's theorem and the invariance properties of Haar measure.

\subsection{Proof methods}\label{proof-method}

To prove Theorem \ref{mackey-uncountable} we broadly follow the arguments in \cite[\S 3.5]{glasner2015ergodic} (as adapted in a blog post of the second author \cite{tao2014mackey}).  

\begin{figure}
    \centering
    \begin{tikzcd}
    K & X \arrow[l,dotted,"\theta"] \arrow[dl,dotted,"\theta_*"] \arrow[d,"\pi"] & X \rtimes_1 K \arrow[l,"\pi"]  \arrow[d,Rightarrow,"\Pi"] \arrow[r,Rightarrow,"\pi"] & \Inv(X \rtimes_1 K) \equiv K \arrow[d, Rightarrow,"\pi"] \\
    H & Y \arrow[d,dotted,"\psi_0"] \arrow[dl, dotted,"\Phi"] & Y \rtimes_\rho K \arrow[l,"\pi"] \arrow[r, Rightarrow, "\psi"] & \Inv(Y \rtimes_\rho K) \equiv H \backslash K \\
    K \arrow[r,dotted,"\pi"'] & H \backslash K
    \end{tikzcd}
    \caption{The commutative diagram to chase to prove part (i) of the Mackey--Zimmer theorem. Dotted arrows are merely measurable; ordinary arrows preserve the probability measure and the $\Gamma$ action; and the thick arrows also preserve the larger $\Gamma \times K^\op$ action. Subscripts on the various projection maps $\pi$ have been omitted for brevity, as has any mention of the inclusion functor $\Inc$.}
    \label{fig:diagram}
\end{figure}

For part (i) of the Mackey--Zimmer theorem, the given data consists of a group extension $(X,\pi)$ of $Y$ by a group $K$, together with a vertical coordinate $\theta \colon X \to K$.  The proof proceeds by expanding this diagram to Figure \ref{fig:diagram}, according to the following strategy (where for sake of discussion we ignore the technical distinctions between concrete and abstract categories).

\begin{enumerate}
    \item First, one lifts the extension $(X, \pi)$ of the $\Gamma$-system $Y$ to an extension $(X \rtimes_1 K, \Pi)$ of the $\Gamma \times K^\op$-system $Y \rtimes_\rho K$ defined by $\Pi(x,k):= (\pi(x), \theta(x) k)$, where $\rho$ is the cocycle associated to the group extension $X$, and $X \rtimes_1 K$ is the group skew-extension of $X$ by $K$ using the trivial cocycle $1$ (this is the same as the direct product $X \times K$ of $X$ and $K$, where $\Gamma$ acts trivially on $K$ and $K^\op$ acts trivially on $X$).  The point is that this new system also acquires a right action of $K$ (or equivalently, a left action of the opposite group $K^\op$), which commutes with the existing $\Gamma$ action to promote this extension to an extension of $\Gamma \times K^{\op}$-systems.
    \item Using the ergodicity of $X$, one can show that the invariant factor $\Inv(X \rtimes_1 K)$ is isomorphic to $K$, which then implies that $\Inv(Y \rtimes_\rho K)$ is a factor of $K$.  All of these factors remain preserved by the right action of $K$. Using an argument based on the Stone--Weierstra{\ss} theorem and the use of convolutions to approximate measurable functions by continuous ones, one can classify the factors of $K$ as being of the form $H\backslash K$ (up to isomorphism) for some closed subgroup $H$ of $K$.  Thus $\Inv(Y \rtimes_\rho K)$ is isomorphic to $H\backslash K$ for some such $H$ (known as the \emph{Mackey range} of $\rho$).
    \item The factor map $\psi \colon Y \times K \to H \backslash K$ is equivariant with respect to the right-action of $K$, and is thus determined by a map $\psi_0 \colon Y \to H \backslash K$.  If we lift this map to a map $\Phi \colon Y \to K$, one can use this map to "straighten" the vertical coordinate $\theta$, to obtain a new vertical coordinate $\theta_*$ that takes values in $H$ rather than $K$.
    \item By appealing to the Fubini--Tonelli theorem, and the translation invariance of Haar measure, one can show that the pair $(\pi,\theta_*)$ pushes the measure $\mu_X$ on $X$ to the product of the measure $\mu_Y$ on $Y$ and Haar measure $\Haar_H$ on $H$, at which point it is straightforward to show that $X$ is equivalent to a skew-product $Y \rtimes_{\rho_*} H$ of $Y$ by $H$.
\end{enumerate}

For part (ii), the given data now consists of a homogeneous extension $(X,\pi)$ of $Y$ by a group quotient $K/L$, together with a vertical coordinate $\theta \colon X \to K/L$.  The proof now proceeds by expanding the diagram to Figure \ref{fig:diagram2}, according to the following strategy:

\begin{figure}
    \centering
    \begin{tikzcd}
    H \arrow[d, "\pi"] & X' \arrow[l,"\theta'_*"'] \arrow[d,"\pi"] \arrow[r,dotted,"\theta'"] & K \arrow[d, "\pi"] \\
    H/M & X \arrow[l,"\theta_*"'] \arrow[d, "\pi"] \arrow[r, dotted, "\theta"] & K/L \\
    & Y
    \end{tikzcd}
    \caption{The commutative diagram to chase to prove part (ii) of the Mackey--Zimmer theorem. Dotted arrows are merely measurable; ordinary arrows preserve the probability measure and the $\Gamma$ action. Subscripts on the various projection maps $\pi$ have been omitted for brevity.}
    \label{fig:diagram2}
\end{figure}

\begin{enumerate}
    \item First, one lifts the homogeneous extension $X$ of $Y$ by $K/L$ to a group extension $X$ of $Y$ by $K$.  It is not difficult to locate a non-ergodic extension of this type by an explicit construction; but in order to apply part (i) we will need the extension to be ergodic.  This can be accomplished by an argument involving the Riesz representation theorem and the Krein--Milman theorem.
    \item Applying part (i), we can view $X'$ as a group skew-product of $Y$ with some closed subgroup $H$ of $K$, thus imbuing $X'$ with a vertical coordinate $\theta'_*$ which is now measure-preserving.  The original vertical coordinate $\theta$ of $X$ then also gets associated with a corresponding vertical coordinate $\theta_*$ in the quotient space $H/M$, where $M := H \cap L$.
    \item From Fubini's theorem one can verify that the pair $(\pi, \theta_*)$ pushes forward the measure $\mu_X$ on $X$ to the product of the measure $\mu_Y$ on $Y$ and the Haar measure $\Haar_{H/M}$ on $H/M$,  at which point it is straightforward to show that $X$ is equivalent to a skew-product $Y \rtimes_{\rho_*} H/M$ of $Y$ by $H/M$.
\end{enumerate}

When extending these arguments to uncountable settings, several steps need to be taken to overcome the additional technical difficulties that arise in this case:

\begin{itemize}
    \item The Nedoma pathology \cite{nedoma1957note} shows that the Borel $\sigma$-algebra behaves badly with respect to products on compact Hausdorff spaces that are not assumed to be metrizable.  In particular the group operations on a compact Hausdorff group can fail to be Borel measurable.  To avoid this problem, we follow \cite{jt19}, \cite{jt-foundational} and endow compact Hausdorff spaces with the Baire $\sigma$-algebra instead of the Borel $\sigma$-algebra, which is much better behaved. 
    \item To avoid having to take the uncountable union of null sets, we will again follow \cite{jt19}, \cite{jt-foundational} and pass from concrete measurable spaces, probability spaces, and measure-preserving systems to their abstract counterparts, which gains us the ability to "delete" the ideal of null sets so that this difficulty no longer arises.
    \item As is common in ergodic theory, it is convenient to model the  probability algebras that arise from the previous considerations by concrete models, in order to use "pointwise" tools such as the Fubini--Tonelli theorem, or pointwise manipulation of cocycles.  We use \emph{canonical model} appearing in various forms in the literature \cite{segal}, \cite{DNP},  \cite{fremlinvol3}, \cite{doob-ratio}, \cite{ellis}, \cite{EFHN}, \cite{jt-foundational} that models  probability algebras by concrete (and compact) probability spaces (somewhat analogously to how the Stone-{\v C}ech compactification "models" a locally compact Hausdorff space by a  compact space).  Crucially, the canonical model is functorial, so that it can also be used to model $\ProbAlgG$-systems by a special type of $\ConcProb_\Gamma$-system. See Theorem \ref{canon}. 
    \item The group $\Gamma$ is not assumed to be countable or amenable, and so many standard ergodic theorems no longer apply in this setting.  Fortunately, one can structure the argument in such a way that no ergodic theorems are required\footnote{In an earlier version of the paper we used an abstract ergodic theorem of Alaoglu and Birkhoff \cite{ab40abstract} that is valid for arbitrary group actions; we thank the referee for pointing out that even this ergodic theorem is not needed in the argument.}.
    \item In the uncountable setting, standard measurable selection lemmas no longer apply, which potentially causes difficulty in the step where one wishes to lift the map $\psi_0 \colon Y \to H \backslash K$ to a map $\Phi \colon Y \to K$ in an (abstractly) measurable fashion.  However it turns out to be possible to proceed by using a lifting result of Gleason \cite{gleason}, exploiting the extremal disconnectedness of (the canonical model of) $Y$.
\end{itemize}

As a variant of our main theorem (and following a suggestion of the anonymous referee), we also give a cocycle-free description of group extensions, in the spirit of \cite[Theorem 3.29]{glasner2015ergodic}; see Section \ref{cocycle-free}.

\subsection{Notation}

We will use the language of category theory throughout this paper.  For basic definitions, such as that of a category, functor, or natural transformation, see \cite[Appendix A]{jt-foundational} and the references therein.  We highlight in particular the \emph{casting convention} we will use to "automatically" convert objects or morphisms in one category $\Cat$ into another $\Cat'$ (cf.~\cite[Section 1.6]{jt-foundational}):

\begin{definition}[Casting operators]\label{cast}  Define a \emph{casting functor} (or \emph{casting operator}) to be any one of the following functors:
\begin{itemize}
    \item[(i)]  A functor depicted in blue in Figure \ref{fig:categories}. (In particular, all the forgetful functors in Figure \ref{fig:categories} are casting functors.)
    \item[(ii)]  The identity functor $\ident_\Cat$ on any category $\Cat$. 
    \item[(iii)]The vertex functors ${ \Vertex} \colon (\Cat \downarrow X) { \to} \Cat$, ${ \Vertex} \colon (X \downarrow \Cat) \to \Cat$ that map a $(\Cat \downarrow X)$-object $Y\to X$ or $(X\downarrow \Cat)$-object $X\to Y$ to its vertex object $X_\Cat$, and any morphism $f$ in $(\Cat \downarrow X)$ or $(X \downarrow \Cat)$ to the corresponding $\Cat$-morphism $f_\Cat$.  (For example, we have vertex functors from $(\Cat \downarrow Y)$ to $\Cat$ when $\Cat = \ProbAlg$, $\ConcProb$, $\ProbAlgG$, $\ConcProb_\Gamma$ and $Y$ is a $\Cat$-object.)
    \item[(iv)]  The obvious forgetful functor from $\Cat_{\Gamma'}$ to $\Cat_\Gamma$ whenever $\Gamma$ is a normal subgroup of $\Gamma'$.
    \item[(v)] Any finite composition of functors from the above list.
\end{itemize}
The casting functors in this paper are chosen to form a commutative diagram; thus for any two categories $\Cat,\Cat'$ there is at most one casting functor $\Cast_{\Cat \to \Cat'} \colon \Cat {\to} \Cat'$ from the former to the latter.  If such a casting functor exists, we say that $\Cat$ can be \emph{casted} to $\Cat'$, and for any $\Cat$-object $X = X_\Cat$ we define the \emph{cast} of $X$ to $\Cat'$ to be the corresponding object in $\Cat'$, we write $X_{\Cat'}$ for $\Cast_{\Cat \to \Cat'}(X)$, and refer to $X_{\Cat'}$ as the \emph{cast} of $X$ to $\Cat'$ (and $X_\Cat$ as a \emph{promotion} of $X_{\Cat'}$ to $\Cat$).  We may cast or promote $\Cat$-morphisms, $\Cat$-diagrams, products, coproducts, and tensors  in $\Cat$ to $\Cat'$ in a similar fashion.  Thus for instance a $\Cat'$-morphism has at most one promotion to a $\Cat$-morphism if the casting functor is faithful.  (Informally, one should view the $\Cat'$-cast of a mathematical structure associated to $\Cat$ as the "obvious" corresponding $\Cat'$-structure associated to the $\Cat$-structure, with the choice of casting functors in Figure \ref{fig:categories} in this paper formalizing what "obvious" means.)

When a mathematical expression or statement requires an object or morphism to lie in $\Cat$, but an object or morphism in another category $\Cat'$ appears in its place, then it is understood that a casting operator from $\Cat$ to $\Cat'$ is automatically applied.  In particular, if a statement is said to "hold in $\Cat'$" or "be interpreted in $\Cat'$", or if an object or morphism  is to be understood as a $\Cat'$-object or a $\Cat'$-morphism, then the appropriate casting operators to $\Cat$ are understood to be automatically applied.  We will sometimes write $X =_\Cat Y$ to denote the assertion that an identity $X=Y$ holds in $\Cat$.  

If one composes a named functor $\Func$ on the left or right (or both) with forgetful casting functors, the resulting functor will also be called $\Func$ when there is no chance of confusion (or if the ambiguity is irrelevant). 
\end{definition}

We give some examples to illustrate this casting convention (using concepts introduced in Section \ref{categories-sec} below).  Further examples may be found in \cite[Example A.23]{jt-foundational}.

\begin{example}\text{}
\begin{itemize}
\item[(i)] If $X = X_\CHProb = (X_\Set, {\mathcal F}_X, \mu_X)$ is a compact Hausdorff probability space, then $X_\Set$ is the associated underlying set, $X_\CH = (X_\Set, {\mathcal F}_X)$ is the associated compact Hausdorff space, $X_\ConcMes = \BaireFunc(X_\CH)=(X_\Set, \Sigma_{X_\Set})$ is the associated concrete measurable space where $X_\Set$ is equipped with the Baire $\sigma$-algebra $\Sigma_{X_\Set}=\Baire(X_\CH)$, $X_\ConcProb = (X_\ConcMes, \mu_X)$ is the associated concrete probability space, $X_\SigmaAlg = \Baire(X)$ is the Baire $\sigma$-algebra, $X_\ProbAlg = (\Baire(X)/{\mathcal N}_X, \mu_X/{\sim})$ is the associated  probability algebra, and $X_\AbsProb = (\Baire(X), \mu_X)$ is the associated abstract probability space.  In contrast, $\Inc(X) = \Inc(X_\ProbAlg) = (\Baire(X)/{\mathcal N}_X, \mu_X/{\sim})$ is a smaller abstract probability space than $X_\AbsProb$, in which the ideal of null sets has been "deleted".
\item[(ii)]  If $T \colon X \to Y$ is a $\ConcProb$-morphism, then $T_{\SigmaAlg} \colon Y_{\SigmaAlg} \to X_{\SigmaAlg}$ is the associated pullback map: $T_\SigmaAlg(E) = T^*(E) = T_\Set^{-1}(E)$.
\item[(iii)]  If $f_1, f_2 \colon X \to Y$ are $\ConcProb$-morphisms that agree almost everywhere, then they agree in $\ProbAlg$: $f_1 =_{\ProbAlg} f_2$, that is to say the $\ProbAlg$-morphisms $(f_1)_\ProbAlg \colon X_\ProbAlg \to Y_\ProbAlg$ and $(f_2)_{\ProbAlg} \colon X_\ProbAlg \to 
Y_\ProbAlg$ agree.  (The converse implication can fail; see \cite[Examples 5.1, 5.2, 5.3]{jt19}.)
\item[(iv)]  If $X = (X_\ConcMes, \mu_X)$, $Y = (Y_\ConcMes, \mu_Y)$ are $\ConcProb$-spaces, then a $\ConcMes$-morphism $T \colon X_\ConcMes \to Y_\ConcMes$ can be promoted to a $\ConcProb$-morphism $T_\ConcProb \colon X \to Y$ if and only if $T$ preserves the measure in the sense that $T_* \mu_X = \mu_Y$.  If this happens then the promotion is unique; this reflects the faithful nature of the casting functor from $\ConcProb$ to $\ConcMes$.
\end{itemize}
\end{example}

\section{Main categories and functors}\label{categories-sec}

In this section we present the categories and functors appearing in Figure \ref{fig:categories} that have not already been defined in the introduction.  Most of these categories and functors were already discussed in depth in the companion paper \cite{jt-foundational}; in those cases we only give an abbreviated description of these objects here, giving precise citations to locations in \cite{jt-foundational} that contain a more precise definition.

\subsection{Basic categories and functors}

We first review several categories and functors from \cite{jt-foundational}.

\begin{definition}[Basic categories]\
\begin{itemize}
\item[(i)] \cite[Example A.2]{jt-foundational}  $\Set$ is the category of sets $X$ (with morphisms being arbitrary functions $T \colon X \to Y$).
\item[(ii)]  \cite[Definition 6.1(ii)]{jt-foundational} $\SigmaAlg$ is the category of $\sigma$-complete Boolean algebras $\B$, with morphisms being Boolean homomorphisms $\phi \colon \B \to \B'$ preserving countable joins and meets.
\item[(iii)]  \cite[Definition 6.1(iii)]{jt-foundational} $\AbsMes$ is the opposite category to $\SigmaAlg$, thus for instance an $\AbsMes$-space (or \emph{abstract measurable space}) $X$ takes the form $X = X_\SigmaAlg$ for a $\SigmaAlg$-algebra $X$, and an $\AbsMes$-morphism $T \colon X \to Y$ takes the form $T = T_\SigmaAlg$ for a $\SigmaAlg$-morphism $T_\SigmaAlg \colon Y_\SigmaAlg \to X_\SigmaAlg$, and composition given by the law $S \circ T := (T_\SigmaAlg \circ S_\SigmaAlg)$.
\item[(iv)] \cite[Definition 3.1(iii)]{jt-foundational} $\ConcMes$ is the category of concrete measurable spaces $X$, with morphisms being measurable functions $T \colon X \to Y$.
\item[(v)]  \cite[Definition 6.1(iv)]{jt-foundational} $\AbsProb$ is the category of \emph{abstract probability spaces} $X = (X_\AbsMes, \mu_X)$, with morphisms being $\AbsMes$-morphisms $T_\AbsMes \colon X_\AbsMes \to Y_\AbsMes$ that are probability-preserving, and composition given by the $\AbsMes$-composition law.
\item[(vi)]  \cite[Definition 1.1(i)]{jt-foundational} $\CH$ is the category of compact Hausdorff spaces, with morphisms being continuous maps $T \colon X \to Y$.
\item[(vii)]  \cite[Definition 5.1(i)]{jt-foundational} $\CHProb$ is the category of compact Hausdorff probability spaces $X = (X_\CH, \mu_X)$, where the underlying $\SigmaAlg$-algebra for $\mu_X$ is the Baire algebra, and the morphisms given by continuous probability-preserving maps $T \colon X \to Y$.
\end{itemize}
Unless otherwise specified, composition of morphisms is given by the $\Set$-composition law.
\end{definition}

As discussed in \cite{jt-foundational}, the categories $\Set$, $\CH$, $\ConcMes$, $\AbsMes$ admit arbitrary categorical products (and dually, $\SigmaAlg$ admits arbitrary categorical coproducts), while $\ProbAlg$, $\ConcProb$, $\CHProb$ admit arbitrary tensor products\footnote{See \cite[Section A.3]{jt-foundational} for an introduction and comparison of the notions of categorical (co)products and tensor products based on the concept of (semicartesian) symmetric monoidal categories.}.

The faithful forgetful functors displayed as unlabeled blue arrows in Figure \ref{fig:categories} are defined in the obvious fashion.  Several additional functors in this diagram were defined in \cite{jt-foundational}, and we give an abridged description here (describing the action of the functors on objects but not morphisms):

\begin{definition}[Basic functors]\label{basic-func}\ 
\begin{itemize}
  \item[(i)]  \cite[Definition 6.1(viii)]{jt-foundational}  If $X = (X_\Set, \Sigma_X)$ is a $\ConcMes$-space, $\Abs(X) = X_\AbsMes = \Sigma_X$ is its abstraction.  Similarly, if $X = (X_\ConcMes, \mu_X)$ is a $\ConcProb$-space, $\Abs(X) = (\Abs(X_\ConcMes), \mu_X)$ is its abstraction.
  \item[(ii)]  \cite[Definition 6.1(ix)]{jt-foundational} If $X = (X_\AbsMes, \mu_X)$ is an $\AbsProb$-space, $\Mes(X) = X_\ProbAlg$ is the associated $\ProbAlg$-space formed by quotienting out the null ideal of $X_\SigmaAlg$.
  \item[(iii)]  \cite[Definition 3.1(v)]{jt-foundational}  If $X$ is a $\CH$-space, $\BaireFunc(X) = X_\ConcMes$ is the $\ConcMes$-space formed by endowing $X_\Set$ with the Baire $\SigmaAlg$-algebra $\Sigma_{X_\Set}=\Baire(X)$.
  \item[(iv)] If $X = (\Inc(X)_\SigmaAlg, \mu_X)$ is a $\ProbAlg$-space, then $\Inc(X) = \Inc(X)_\SigmaAlg$ is the associated $\AbsProb$-space.
\end{itemize}
\end{definition}

All the functors in blue in Figure \ref{fig:categories} are declared to be casting functors in the sense of Definition \ref{cast}; for instance, we have a casting functor $\Cast_{\CHProb \to \ProbAlg}$ defined by
$$\Cast_{\CHProb \to \ProbAlg} = \Mes \circ \Abs \circ \Cast_{\CHProb \to \ConcProb}.$$
In \cite[Lemma 6.4(ii)]{jt-foundational} it was observed that there is a natural monomorphism $\iota$ from $\Inc \circ \Mes$ to $\ident_\AbsProb$, thus we have an $\AbsProb$-monomorphism $\iota \colon \Inc(X_\ProbAlg) \to X$ naturally assigned to each $\AbsProb$-space $X$.

We will make frequent use of the Riesz representation theorem for compact Hausdorff spaces (equipped with the Baire $\sigma$-algebra).  Given a $\CH$-space $X$, we let $C(X)$ be the $*$-algebra of continuous functions from $X$ to $\C$.

\begin{theorem}[Riesz representation theorem in $\CH$]\label{rrt}  Let $X$ be a $\CH$-space.  Then there is a one-to-one correspondence between Baire probability measures $\mu_X$ on $X$ and linear functionals $\lambda \colon C(X) \to \C$ which are non-negative ($\lambda(f) \geq 0$ whenever $f \geq 0$) and are such that $\lambda(1)=1$, with each measure $\mu_X$ being associated to the integration functional
$$ \lambda_\mu(f) := \int_X f\ d\mu.$$
Furthermore, the Baire probability measures are automatically Baire--Radon (one has $\mu(E) = \sup \{ \mu(F): F \subset E, F \hbox{ is } G_\delta \}$ for all Baire sets $E$).
\end{theorem}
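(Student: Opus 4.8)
The plan is to treat the two directions of the correspondence separately and then address the automatic regularity at the end. First I would verify the easy direction $\mu_X \mapsto \lambda_{\mu_X}$. Since $X$ is compact, every $f \in C(X)$ is bounded, and by the very definition of the Baire $\sigma$-algebra as the one generated by $C(X)$, each such $f$ is Baire-measurable; hence $\lambda_{\mu_X}(f) := \int_X f\,d\mu_X$ is well-defined, clearly linear and non-negative, and satisfies $\lambda_{\mu_X}(1) = \mu_X(X) = 1$. For injectivity of this assignment I would show that a Baire probability measure is determined by its integrals against continuous functions. The key point is that the zero sets $Z = \{f = 0\}$ (for $f \in C(X)$) form a $\pi$-system generating $\Baire(X)$, and that each indicator $1_Z$ is the pointwise decreasing limit of the continuous functions $g_n := \max(1 - n|f|, 0)$, which take values in $[0,1]$. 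By dominated convergence $\mu_X(Z) = \lim_n \lambda_{\mu_X}(g_n)$, so two measures inducing the same functional agree on every zero set, and therefore, by Dynkin's $\pi$--$\lambda$ theorem together with $\mu_X(X)=1$, on all of $\Baire(X)$.

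For surjectivity, given a non-negative $\lambda$ with $\lambda(1)=1$, I would produce a representing Baire measure by invoking the classical Riesz--Markov--Kakutani representation theorem, which is valid on an arbitrary compact Hausdorff space and yields a regular Borel probability measure $\nu$ with $\lambda(f) = \int_X f\,d\nu$ for all $f \in C(X)$. I would then take $\mu_X$ to be the restriction of $\nu$ to the sub-$\sigma$-algebra $\Baire(X)$; this restriction is again a countably additive probability measure, and since continuous functions are Baire-measurable we have $\int_X f\,d\mu_X = \int_X f\,d\nu = \lambda(f)$, so $\mu_X$ represents $\lambda$. Note that the injectivity established above makes this clean even in the non-metrizable case: although there may be several regular Borel measures in play, their restrictions to $\Baire(X)$ all coincide, so the representing Baire measure is genuinely unique, and the bijection follows.

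It remains to establish the Baire--Radon inner regularity, which I expect to be the main technical point, since it is precisely where the Borel/Baire distinction and the Nedoma-type pathologies the paper is careful about could intrude. Rather than relying on regularity of the ambient Borel measure $\nu$ (delicate when $X$ is non-metrizable), I would prove regularity intrinsically on $\Baire(X)$. In a compact Hausdorff space the compact $G_\delta$ sets coincide with the zero sets, since compact Hausdorff spaces are normal and in a normal space a closed $G_\delta$ set is a zero set; the cozero sets are their complements. I would then call a Baire set $E$ \emph{regular} if for every $\eps > 0$ there exist a zero set $Z \subseteq E$ and a cozero set $U \supseteq E$ with $\mu_X(U \setminus Z) < \eps$, and verify that the regular sets form a $\sigma$-algebra: closure under complementation is immediate because zero and cozero sets interchange, while closure under countable unions follows from an $\eps/2^n$ argument combined with continuity of $\mu_X$. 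Every zero set $Z = \{f=0\}$ is regular, taking $Z$ itself as the inner approximation and the decreasing cozero sets $\{|f| < 1/n\}$ (whose measures converge to $\mu_X(Z)$) as outer approximations, so this $\sigma$-algebra contains the generators of $\Baire(X)$ and hence equals it. This yields inner regularity by zero sets, equivalently by compact $G_\delta$ sets, which is stronger than and hence implies the stated approximation by $G_\delta$ sets. The remaining bookkeeping, namely that zero sets are closed under finite unions and intersections and so genuinely form a $\pi$-system, is routine.
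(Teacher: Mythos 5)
Your proposal is correct, but it is worth noting that the paper does not actually prove this theorem: it delegates the correspondence to classical references (Varadarajan, Sunder, Hartig, Garling) and the Baire--Radon property to \cite[Proposition 4.2]{jt-foundational}. Your argument is therefore a genuine reconstruction rather than a match. The pieces all check out: injectivity via the $\pi$-system of zero sets, dominated convergence applied to $g_n = \max(1-n|f|,0)$, and Dynkin's theorem is the standard uniqueness argument; surjectivity by invoking the Borel Riesz--Markov--Kakutani theorem and restricting to $\Baire(X)$ is legitimate and not circular (the non-uniqueness of regular Borel extensions in the non-metrizable case is exactly neutralized by your injectivity step, as you observe); and your intrinsic ``good sets'' proof of inner regularity by zero sets --- using that zero sets and compact $G_\delta$ sets coincide in a compact Hausdorff space by normality, and that the sets approximable inside by zero sets and outside by cozero sets form a $\sigma$-algebra containing the generators --- is essentially the argument in the companion paper, and it correctly yields the stated (weaker) approximation by $G_\delta$ sets. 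The one place where your route differs in spirit from the cited sources is the existence step: references such as Garling and Varadarajan construct the Baire measure directly by a Daniell-type extension on the lattice of zero sets, which avoids ever mentioning Borel measures and is more in keeping with the paper's Baire-first philosophy; your detour through a regular Borel measure costs nothing mathematically but imports a heavier classical theorem than is strictly needed. Minor bookkeeping you gesture at but should make explicit if writing this up: countable unions of cozero sets are cozero (sum a uniformly convergent series of the defining functions), which is needed for the $\eps/2^n$ step.
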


\begin{proof}  See for instance \cite[\S 2]{varadarajan-riesz}, \cite[Theorem 3.3]{sunder}, \cite{hartig}, or \cite{garling} for the first claim, and \cite[Proposition 4.2]{jt-foundational} for the second claim.  See also \cite[Theorem 5.4]{jt-foundational} for further variations of the Riesz representation theorem for locally compact Hausdorff spaces.
\end{proof}

\subsection{The canonical model}

A more non-trivial functor that we will need in our arguments is the \emph{canonical model functor}:

\begin{theorem}[Canonical model functor]\label{canon}  There exists a faithful and full functor $\Stone \colon \ProbAlg \to \CHProb$ with the following properties:
\begin{itemize}
\item[(i)] (Concrete model)  There is a natural isomorphism between $\Cast_{\CHProb \to \ProbAlg} \circ \Stone$ and the identity functor $\ident_{\ProbAlg}$.
\item[(ii)]  (Inclusion) There is a natural monomorphism $\iota'$ from $\Inc$ to $\Cast_{\CHProb \to \AbsProb} \circ \Stone$. (See Figure \ref{fig:canon-diag}.)
\item[(iii)]  (Strong Lusin property)  For any $\ProbAlg$-space $X$, we have the equivalence $L^\infty(\Stone(X)) \equiv C(\Stone(X))$.  That is to say, every bounded measurable function on $\Stone(X)$ agrees almost everywhere with precisely one continuous function on $\Stone(X)$.
\item[(iv)]  (Canonical representation) 
 If $X$ is a $\ProbAlg$-space and $K$ is a $\CH$-space, then to every $\AbsMes$-morphism $f \colon \Inc(X) \to K$ there is a unique $\CH$-morphism $\tilde f \colon \Stone(X) \to K$ which represents (or "extends") $f$ in the sense that $f =_\AbsMes \tilde f \circ \iota'$, where $\iota' \colon \Inc(X) \to \Stone(X)$ is the canonical $\AbsProb$-morphism.  In other words, one has an equivalence
$$ \Hom_\AbsMes(\Inc(X) \to K) \equiv \Hom_\CH(\Stone(X) \to K).$$
  \item[(v)] (Surjective morphisms)  If $T\colon X \to Y$ is a $\ProbAlg$-morphism, then $\Stone(T) \colon \Stone(X) \to \Stone(Y)$ is surjective\footnote{This is consistent with the fact that all morphisms in $\ProbAlg$ are epimorphisms; see \cite[Lemma 6.3(iii)]{jt-foundational}.}.
\end{itemize}
\end{theorem}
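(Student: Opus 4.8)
The plan is to realize $\Stone(X)$ as the Gelfand spectrum of the commutative von Neumann algebra $L^\infty(X)$ attached to the probability algebra $X$, and to transport the integration functional to a Baire measure via the Riesz representation theorem (Theorem \ref{rrt}). Concretely, let $A := L^\infty(X)$ be the unital commutative $C^*$-algebra generated by the indicators $1_E$, $E \in \Inc(X)_\SigmaAlg$; since measure algebras are Dedekind complete, $A$ is in fact a commutative von Neumann algebra carrying the faithful normal state $f \mapsto \int_X f\,d\mu_X$. Set $\Stone(X)_\CH := \Spec(A)$, so the Gelfand transform gives an isometric $*$-isomorphism $A \cong C(\Stone(X)_\CH)$ onto a compact Hausdorff space; the state becomes a positive unital functional on $C(\Stone(X)_\CH)$, which by Theorem \ref{rrt} is integration against a unique Baire probability measure $\mu_{\Stone(X)}$, giving the $\CHProb$-space $\Stone(X)$. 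A $\ProbAlg$-morphism $T \colon X \to Y$ pulls back to a measure-preserving unital $*$-homomorphism $L^\infty(Y) \to L^\infty(X)$, and Gelfand duality turns this into a continuous measure-preserving map $\Stone(T) \colon \Stone(X) \to \Stone(Y)$; functoriality, faithfulness, and fullness all follow from the fact that Gelfand duality is a contravariant equivalence and that $\ProbAlg$-morphisms $X \to Y$ correspond bijectively to measure-preserving $*$-homomorphisms $L^\infty(Y) \to L^\infty(X)$ (such a $*$-homomorphism restricts to, and is determined by, its action on the projection lattices).

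The technical heart is the strong Lusin property (iii). Because $A$ is a commutative von Neumann algebra, its spectrum $\Stone(X)_\CH$ is hyperstonean: it is extremally disconnected, and $\mu_{\Stone(X)}$ is a normal measure of full support. I would use this to identify the Baire $\sigma$-algebra of $\Stone(X)$ modulo null sets canonically with the complete Boolean algebra of clopen sets, which in turn is the projection lattice of $A$, namely $\Inc(X)_\SigmaAlg$. Consequently the $L^\infty$-space built over $\Stone(X)$ coincides with $A = C(\Stone(X))$, and full support guarantees that the continuous representative of a bounded measurable function is unique; this is exactly (iii). The same identification of the measure algebra of $\Stone(X)$ with $(\Inc(X)_\SigmaAlg, \mu_X)$, made natural in $X$ by sending $E$ to the clopen set corresponding to the projection $1_E$, yields the concrete-model isomorphism $\Cast_{\CHProb \to \ProbAlg} \circ \Stone \cong \ident_{\ProbAlg}$ of (i).

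The remaining properties then follow rather formally. For (ii), I would compose the natural monomorphism $\iota \colon \Inc \circ \Mes \to \ident_{\AbsProb}$ of \cite[Lemma 6.4(ii)]{jt-foundational} with the identification (i): since $\Mes(\Cast_{\CHProb \to \AbsProb}(\Stone(X))) \cong X$, the arrow $\iota$ evaluated at $\Stone(X)$ becomes a monomorphism $\iota' \colon \Inc(X) \to \Cast_{\CHProb \to \AbsProb}(\Stone(X))$, natural by naturality of $\iota$ and of (i). For the universal property (iv), a $\CH$-morphism $\tilde f \colon \Stone(X) \to K$ produces $\tilde f \circ \iota' \in \Hom_\AbsMes(\Inc(X) \to K)$; conversely, an $\AbsMes$-morphism $f \colon \Inc(X) \to K$ pulls continuous functions back to a unital $*$-homomorphism $C(K) \to L^\infty(X) = C(\Stone(X))$, which Gelfand duality (using $K \cong \Spec C(K)$) turns into the required $\tilde f$. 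Injectivity of the correspondence uses full support (two continuous maps into $K$ agreeing $\mu$-a.e. agree everywhere, since $C(K)$ separates points), and surjectivity uses the strong Lusin property (iii) to pass between $L^\infty$ and $C$. Finally (v) is immediate: $T$ being a $\ProbAlg$-morphism makes $L^\infty(Y) \to L^\infty(X)$ an \emph{injective} unital $*$-homomorphism, and injective $*$-homomorphisms of commutative unital $C^*$-algebras dualize to maps of spectra with dense image, which are surjective because the spectra are compact Hausdorff.

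The main obstacle is the hyperstonean analysis underlying (i) and (iii): one must prove that passing from $X$ to the measure space $\Stone(X)$ introduces no new Baire-measurable sets modulo null sets beyond the clopen sets coming from $\Inc(X)_\SigmaAlg$. This is where the completeness of the measure algebra, the normality of $\mu_{\Stone(X)}$, and --- crucially for the non-metrizable setting --- the use of the Baire rather than the Borel $\sigma$-algebra all enter; it is also the step most sensitive to the foundational conventions of \cite{jt-foundational}, so I would expect to lean on the Riesz representation theorem (Theorem \ref{rrt}) and its automatic Baire--Radon regularity to control the measure-theoretic structure of the spectrum.
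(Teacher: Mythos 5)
Your proposal is correct and follows essentially the same route as the paper, whose proof simply cites the companion paper's first construction of $\Stone$: the Gelfand spectrum of the commutative $C^*$-algebra $L^\infty(X)$, with the measure supplied by the Riesz representation theorem and properties (i)--(v) verified via the hyperstonean structure of the spectrum. You correctly identify the genuine technical content (that the Baire algebra of $\Stone(X)$ modulo null sets reduces to the clopen algebra $\Inc(X)_{\SigmaAlg}$, using normality, full support, and Baire--Radon regularity), which is precisely what the cited results in the companion paper establish.
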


\begin{proof}  Two constructions of $\Stone$ are given in \cite{jt-foundational}.  In the first construction, $\Stone(X)$ is the Gelfand spectrum of the space $L^\infty(X)$ of bounded abstractly measurable functions on a $\ProbAlg$-space $X$ (viewed as a commutative $C^*$-algebra), with the probability measure provided by the Riesz representation theorem (Theorem \ref{rrt}), and the relevant claims (i)-(v) are verified in \cite[Theorem 7.4, Proposition 7.5, Proposition 7.8, Proposition 7.9]{jt-foundational}; see also \cite{segal}, \cite{DNP}, \cite{ellis}, \cite[\S 12.3, 13.4]{EFHN} for very similar constructions and results.  In the second construction, $\Stone(X)$ is the Stone space associated via Stone duality to $\Inc(X)_\SigmaAlg$ (viewed as a Boolean algebra), and the measure of a Baire set in $\Stone(X)$ is equal to the measure of the unique element of $X$ that generates a clopen subset of $\Stone(X)$ that differs from that Baire set by a Baire-meager set; see \cite[Theorem 9.10]{jt-foundational}.  Some closely related constructions also appear in \cite{fremlinvol3}, \cite{doob-ratio} (see \cite[Remarks 9.9, 9.12 and 9.13]{jt-foundational} for a comparison). However, for the purposes of this paper, the construction of $\Stone$ can be taken as a "black box".
\end{proof}

 \begin{figure}
    \centering
    \begin{tikzcd}
    \Stone(X) \arrow[r,"\Stone(\pi)"] & \Stone(Y) \\
    \Inc(X) \arrow[u, "\iota'", hook] \arrow[r, "\Inc(\pi)"] & \Inc(Y) \arrow[u, "\iota'", hook]
    \end{tikzcd}
    \caption{Every $\ProbAlg$-morphism $\pi: X \to Y$ gives rise to an $\AbsProb$-morphism $\Inc(\pi) \colon \Inc(X) \to \Inc(Y)$ and a $\CHProb$-morphism $\Stone(\pi) \colon \Stone(X) \to \Stone(Y)$, linked by the above commutative diagram in $\AbsProb$, with $\iota'$ the canonical inclusions. Casting functors have been suppressed to reduce clutter.}
    \label{fig:canon-diag}
\end{figure} 

As one quick application of the canonical model, one can immediately define\footnote{This may appear to be potentially circular because, as noted above, one way to construct $\Stone$ is to first define an abstract $L^\infty$ space $L^\infty(Y)$ and then evaluate the Gelfand spectrum.  However, it is possible to construct $\Stone$ without using the notion of abstract $L^\infty$, and in any event it is not difficult to see using \eqref{linf} below that the definition of abstract $L^\infty$ given here agrees up to natural isomorphism with the one in \cite[\S 7]{jt-foundational}.} the abstract $L^p$-space $L^p(Y)$ of a $\ProbAlg$-space $Y$ for any $1 \leq p \leq \infty$ simply by declaring
$$ L^p(Y) := L^p(\Stone(Y));$$
in this paper we will only need this construction for $p=2,\infty$.
Furthermore one can define an abstract integral $\int_Y \colon L^p(Y) \to \C$ on these spaces just by using the concrete integral $\int_{\Stone(Y)} \colon L^p(\Stone(Y)) \to \C$.  Using the fact that $Y$ and $\Stone(Y)$ are isomorphic in $\ProbAlg$, we see that every $E \in \Inc(Y)_\SigmaAlg$ gives rise to an indicator function $1_E \in L^\infty(Y) \subset L^2(Y)$ which obeys the expected properties, e.g., $1_E 1_F = 1_{E \wedge F}$ and $\int_Y 1_E = \mu_Y(E)$.  We remark that an equivalent construction of $L^p$-spaces on general measure algebras was given by Fremlin \cite{fremlinvol3}; see \cite[Remark 9.13]{jt-foundational} for a detailed comparison.  Also, if $Y$ is a $\ConcProb$-space, then the concrete $L^p$-spaces $L^2(Y), L^\infty(Y)$ can be naturally identified with their abstract counterparts $L^2(Y_\ProbAlg), L^\infty(Y_\ProbAlg)$.  For instance, $L^2(Y)$ and $L^2(Y_\ProbAlg)$ can both be identified with the Hilbert spaces generated by formal indicator functions $1_E, E \in Y_\SigmaAlg$ with inner product $\langle 1_E, 1_F \rangle = \mu_Y(E \wedge F)$, and $L^\infty(Y)$ and $L^\infty(Y_\ProbAlg)$ can both be identified with the operator norm linear combinations of indicators $1_E, E \in Y_\SigmaAlg$, viewed as bounded linear operators on $L^2(Y)$.  As such we will freely identify the two $L^p$ constructions, so that
$$ L^p(Y) \equiv L^p(Y_\ProbAlg)$$
for $Y \in \ConcProb$ and $p=2,\infty$.  Thus for instance we have the equivalences
$$ L^\infty(Y) \equiv L^\infty(Y_\ProbAlg) = L^\infty(\Stone(Y)) \equiv C(\Stone(Y)).$$

If $\pi \colon X \to Y$ is a $\ProbAlg$-morphism, we define the pullback maps $\pi^* \colon L^\infty(Y) \to L^\infty(X)$ and $\pi^* \colon L^2(Y) \to L^2(X)$ by 
$$ \pi^* f := f \circ \Stone(\pi).$$
Note that with our identifications this is compatible with the pullback maps $\pi^* \colon L^\infty(Y) \to L^\infty(X)$ and $\pi^* \colon L^2(Y) \to L^2(X)$ associated to a $\ConcProb$-morphism $\pi$ by the Koopman operator
$$ \pi^* f := f \circ \pi.$$
Thus by abuse of notation we shall use the same notation $\pi^*$ for both maps.

\subsection{Dynamical categories}

Let $\Gamma$ be a group and $K$ a compact Hausdorff group (that is to say, $K$ is a $\CH$-space with a group structure such that the group operations $()^{-1} \colon K \to K$, $\cdot \colon K \times^\CH K \to K$ are $\CH$-morphisms).  We write $K^\op$ for the opposite group of $K$, that is to say the collection of formal objects $k^\op, k \in K$ with group law
$$ k_1^\op \cdot k_2^\op := (k_2 \cdot k_1)^{\op}$$
and inverse law
$$ (k^\op)^{-1} := (k^{-1}).$$
We then form the product group $\Gamma \times K^\op$, which contains $\Gamma$ as a normal subgroup.  Using the general construction in Definition \ref{dynamic}, one can define dynamical analogues $\Cat_\Gamma$, $\Cat_{\Gamma \times K^\op}$ of the categories $\Cat = \ProbAlg$, $\AbsProb$, $\ConcProb$, $\CHProb$, $\CH$.  All the functors in Figure \ref{fig:categories} between these five categories then extend to their dynamical counterparts as indicated in that figure, and there are also forgetful functors from $\Cat_{\Gamma \times K^\op}$ to $\Cat_\Gamma$ and from $\Cat_\Gamma$ to $\Cat$, defined in the obvious fashion.

\begin{remark}  As a gross oversimplification,  topological dynamics is the study of the category $\CH_\Gamma$, concrete ergodic theory is the study of $\ConcProb_\Gamma$, and the ergodic theory of measure algebras is the study of $\ProbAlgG$. Finally, the theory of topological models of ergodic theory systems is the study of $\CHProb_\Gamma$ (and its relationship with the other categories just mentioned).
\end{remark}

\begin{remark} We will also make occasional use of the categories $\Cat_{K^\op}$ for $\Cat$ as above, with forgetful functors from $\Cat_{\Gamma \times K^\op}$ to $\Cat_{K^\op}$ and from $\Cat_{K^\op}$ to $\Cat$.  We have not placed these categories and functors into Figure \ref{fig:categories} in order to reduce clutter.
\end{remark}

We have now defined all the functors marked in blue in Figure \ref{fig:categories}; we deem these all to be casting functors in the sense of Definition \ref{cast}.  It is a routine matter to verify that these functors all commute with each other.

Next, we introduce an invariant factor functor $\Inv$ to define on both $\ProbAlgG$ and $\ProbAlgGK$:

\begin{definition}[Invariant factor functor]\label{inv-def}  Let $\Gamma'$ be equal to either $\Gamma$ or $\Gamma \times K^\op$ (so in particular $\Gamma$ be a normal subgroup of $\Gamma'$).  
\begin{itemize}
    \item[(i)]  If $X = (X_\ProbAlg,T_X)$ is a $\ProbAlg_{\Gamma'}$-system, we define $\Inv(X)$ to be the $\ProbAlg_{\Gamma'}$-system with $\SigmaAlg$-algebra
    $$ \Inc(\Inv(X))_\SigmaAlg := \{ E \in \Inc(X)_\SigmaAlg: (T_X^\gamma)^*(E) = E \,\forall \gamma \in \Gamma \}$$
    and measure
    $$ \mu_{\Inv(X)}(E) = \mu_X(E)$$
    for all $E \in \Inc(\Inv(X))_\SigmaAlg$, and action defined by
    $$ (T_{\Inv(X)}^\gamma)^*(E) := (T_X^\gamma)^*(E)$$
    for all $E \in \Inc(X)_\SigmaAlg$ and $\gamma \in \Gamma'$.  In particular, if $\Gamma'=\Gamma$ then $T_{\Inv(X)}$ is now just the identity action.  
    \item[(ii)]  If $f \colon X \to Y$ is a $\ProbAlg_{\Gamma'}$-morphism, we define $\Inv(f) \colon \Inv(X) \to \Inv(Y)$ by defining 
    $$\Inv(f)^*(E) = f^*(E)$$
    whenever $E \in \Inc(Y)_\SigmaAlg$.
    \item[(iii)] A $\ProbAlgG$-system $X$ is said to be \emph{ergodic} if $\Inv(X)$ is trivial, in the sense that $\Inc(\Inv(X))_\SigmaAlg = \{0,1\}$,  If $\Cat = \AbsProb$, $\ConcProb$, $\CHProb$, a $\Cat_\Gamma$-system is said to be ergodic if its cast to $\ProbAlgG$ is ergodic. 
\end{itemize}
\end{definition}

Thus, for instance, if $X$ is a $\ConcProb_\Gamma$-system, $X$ is ergodic if the only measurable subsets $E$ of $X$ that are essentially invariant in the sense that $\mu_X(T^\gamma E \Delta E ) =0$ for all $\gamma \in \Gamma$ have measure $0$ or $1$.

It is not difficult to verify that $\Inv$ is a functor on both $\ProbAlgG$ and $\ProbAlg_{\Gamma \times K^{op}}$.  There is a natural projection from $\ident_{\ProbAlg_{\Gamma}}$ to $\Inv$ that gives a $\ProbAlgG$-morphism $\pi \colon X \to \Inv(X)$ from any $\ProbAlgG$-system $X$ to its invariant factor $\Inv(X)$, defined by setting $\Inc(\pi)_\SigmaAlg \colon \Inc(\Inv(X))_\SigmaAlg \to \Inc(X)_\SigmaAlg$ to be the inclusion map.  Using this morphism, one can view $L^2(\Inv(X)) = L^2(\Stone(\Inv(X)))$ as a subspace of $L^2(X)= L^2(\Stone(X))$ (identifying any $f \in L^2(\Stone(\Inv(X)))$ with its counterpart $f \circ \Stone(\pi)$ in $L^2(\Stone(X))$).  Meanwhile, each shift $T^\gamma \colon X \to X$ induces a unitary Koopman operator\footnote{One can interpret $L^2$ here as a functor from $\ProbAlg^\op$ to the category $\Hilb$ of Hilbert spaces (in which the morphisms are isometries); applying Hilbert space duality, one can also form a functor $*L^2$ from $\ProbAlg$ to the category $\OpHilb$ of dual Hilbert spaces (in which the morphisms are co-isometries).  Much of the discussion here can be "factored through $*L^2$", reflecting the fact that the mean ergodic theorem can be formulated in terms of Hilbert space dynamics instead of measurable dynamics, but we will not need to adopt this perspective here.} $(T^\gamma)^* \colon L^2(X) \to L^2(X)$.

We now record the following basic relationship between the invariant factor and any other factor of a system.  Given two closed subspaces $H_2 \subset H_1 \subset H$ of a Hilbert space $H$, we use $H_1 \ominus H_2$ to denote the orthogonal complement of $H_2$ in $H_1$.

\begin{proposition}[Relative orthogonality]\label{rel}  Let $\pi \colon X \to Y$ be a $\ProbAlgG$-morphism.  We identify $L^2(Y)$ with a closed subspace of $L^2(X)$, $L^2(\Inv(Y))$ with a closed subspace of $L^2(Y)$, and $L^2(\Inv(X))$ with a closed subspace of $L^2(X)$.  Then 
$L^2(Y) \ominus L^2(\Inv(Y))$ is orthogonal to $L^2(\Inv(X)) \ominus L^2(\Inv(Y))$ (viewing both spaces as subspaces of $L^2(X)$).
\end{proposition}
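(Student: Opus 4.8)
The plan is to translate the statement into pure Hilbert-space language on $L^2(X)$, using the Koopman representation and the orthogonal projection onto $L^2(Y)$, so that the proposition reduces to two soft facts: that the $L^2$-space of an invariant factor is exactly the fixed space of the Koopman operators, and that conditional expectation onto $L^2(Y)$ commutes with the $\Gamma$-action. First I would set up notation: write $U_\gamma := (T_X^\gamma)^*$ for the Koopman operators on $L^2(X)$ and $V_\gamma := (T_Y^\gamma)^*$ for those on $L^2(Y)$, each of which is unitary since the $T_X^\gamma, T_Y^\gamma$ are measure-preserving $\ProbAlgG$-automorphisms; and let $P \colon L^2(X) \to L^2(X)$ be the orthogonal projection onto the closed subspace $\pi^* L^2(Y) \equiv L^2(Y)$. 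Because $\pi$ is a $\ProbAlgG$-morphism, $\pi^*$ is an isometry that intertwines the actions, $U_\gamma \pi^* = \pi^* V_\gamma$; since each $V_\gamma$ is bijective on $L^2(Y)$, this yields $U_\gamma(L^2(Y)) = L^2(Y)$. As $U_\gamma$ is unitary and preserves this subspace, it preserves its orthogonal complement as well, and therefore commutes with $P$ for every $\gamma \in \Gamma$.

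The key auxiliary fact I would establish is that $L^2(\Inv(X))$ coincides with the fixed space $\{f \in L^2(X) : U_\gamma f = f \ \forall \gamma \in \Gamma\}$, and likewise for $Y$. One inclusion is immediate: the indicators $1_E$ with $E \in \Inc(\Inv(X))_\SigmaAlg$ are fixed vectors whose closed span is $L^2(\Inv(X))$, and the fixed space is closed. For the reverse inclusion, given a fixed vector $f$ one checks that every level set $\{f > t\}$ is $\Gamma$-invariant and hence lies in $\Inc(\Inv(X))_\SigmaAlg$, so that $f$ is $\Inv(X)$-measurable; this uses that each $U_\gamma$ is an automorphism of the $\sigma$-complete Boolean algebra $\Inc(X)_\SigmaAlg$ and therefore commutes with the passage from a bounded function to the indicators of its level sets (i.e. with functional calculus). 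I expect this identification to be the main, though routine, obstacle, since it is the one step where the point-free and uncountable framework must be handled with care; working inside the canonical model $\Stone(X)$, where $L^\infty \equiv C$, renders the functional-calculus manipulation transparent. The same argument applied to $\pi^*$ also gives $L^2(\Inv(Y)) \subseteq L^2(\Inv(X))$ under the embedding, consistent with the stated identifications.

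With these facts in hand the conclusion is a short computation. Fix $f \in L^2(Y) \ominus L^2(\Inv(Y))$ and $h \in L^2(\Inv(X)) \ominus L^2(\Inv(Y))$; the goal is $\langle f, h\rangle = 0$. Since $f \in L^2(Y)$ and $P$ is a self-adjoint projection with $Pf = f$, we have $\langle f, h \rangle = \langle Pf, h\rangle = \langle f, Ph\rangle$. Because $h \in L^2(\Inv(X))$ is a fixed vector and $P$ commutes with every $U_\gamma$, the vector $Ph$ is again fixed and lies in $L^2(Y)$; by the identification of the previous paragraph this forces $Ph \in L^2(\Inv(Y))$. Finally $f \perp L^2(\Inv(Y))$ gives $\langle f, Ph\rangle = 0$, whence $\langle f, h\rangle = 0$. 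This is precisely the asserted orthogonality of $L^2(Y) \ominus L^2(\Inv(Y))$ and $L^2(\Inv(X)) \ominus L^2(\Inv(Y))$ inside $L^2(X)$.
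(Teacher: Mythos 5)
Your argument is correct and is essentially the paper's own proof: both hinge on the observation that the orthogonal projection onto $L^2(Y)$ commutes with the Koopman operators, so it sends the invariant vectors of $L^2(\Inv(X))$ into $L^2(\Inv(Y))$, from which the orthogonality follows by self-adjointness of the projection. The paper states this in two lines, taking for granted the identification of $L^2(\Inv(X))$ with the Koopman fixed space that you verify explicitly.
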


\begin{proof}  If $f \in L^2(\Inv(X))$, then $f$ is invariant, and thus its orthogonal projection $\pi(f)$ to $L^2(Y)$ is also invariant, and thus lies in $L^2(\Inv(Y))$.  Thus $L^2(\Inv(X))$ is orthogonal to $L^2(Y) \ominus L^2(\Inv(Y))$, giving the claim\footnote{We thank the anonymous referee for this simple argument.}.
\end{proof}

We also record a simple computation of an invariant factor:

\begin{lemma}[Invariant factor of translation action]\label{trans}  Let $X$ be a $\ConcProb$-space and $K$ a compact Hausdorff group, which we view as a $\ConcProb$-space endowed with (Baire) Haar measure.  Suppose that an equivalence class $[f] \in L^2(X \times^\ConcProb K)$ of a square-integrable $\ConcMes$-morphism $f \colon X \times^\ConcMes K \to \C$ is $K^\op$-invariant in the sense that $[k_0^{\op} f] = [f]$ in $L^2(X \times^\ConcProb K)$ for all $k_0^\op \in K^\op$, where
$$ k_0^{\op} f(x, k) := f(x, kk_0^{-1}).$$
Then $[f]$ arises from an element of $L^2(X)$ (viewed as a subspace of $L^2(X \times^\ConcProb K)$ by applying the functor $L^2$ to the projection map).
\end{lemma}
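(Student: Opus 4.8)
The plan is to show that the invariance hypothesis forces $f$ to be almost-everywhere independent of the $K$-coordinate, so that $[f]$ is the image of its fibrewise average $\bar f(x) := \int_K f(x,k)\,dk$, which lies in $L^2(X)$. The essential point, and the reason the argument survives in the uncountable setting, is that one should \emph{not} try to locate a single conull set on which $f(x,kk_0^{-1})=f(x,k)$ holds simultaneously for all $k_0$ (this would force an uncountable union of null sets); instead one integrates the hypothesis over the parameter $k_0 \in K$ and exploits the invariance of Haar measure.

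First I would unpack the hypothesis: the assertion $[k_0^\op f]=[f]$ in $L^2(X\times^\ConcProb K)$ says exactly that
$$ \int_{X} \int_K |f(x,kk_0^{-1})-f(x,k)|^2\, dk\, d\mu_X(x) = 0 $$
for each \emph{fixed} $k_0\in K$, where $dk$ denotes $\Haar_K$. I then integrate this identity in $k_0$ over $K$ and apply the Fubini--Tonelli theorem (legitimate since all measures are finite and the integrand is non-negative) to obtain
$$ \int_X \int_K \int_K |f(x,kk_0^{-1})-f(x,k)|^2\, dk_0\, dk\, d\mu_X(x) = 0. $$
For fixed $x,k$ the substitution $k'=kk_0^{-1}$ is measure-preserving by the invariance of Haar measure, so the innermost integral equals $\int_K |f(x,k')-f(x,k)|^2\,dk'$, giving
$$ \int_X \int_K \int_K |f(x,k')-f(x,k)|^2\, dk'\, dk\, d\mu_X(x) = 0. $$
Since for each $x$ the inner double integral equals $2\bigl(\int_K|f(x,k)|^2\,dk - |\bar f(x)|^2\bigr)\ge 0$, it must vanish for $\mu_X$-almost every $x$, which is exactly the statement that $f(x,\cdot)$ equals the constant $\bar f(x)$ for $\Haar_K$-almost every $k$. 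Hence $f(x,k)=\bar f(x)$ for almost every $(x,k)$, i.e. $[f]=[\bar f]$ arises from $\bar f\in L^2(X)$ (which is square-integrable by Cauchy--Schwarz, as $\int_X|\bar f|^2 \le \|f\|_{L^2}^2$).

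The main obstacle is the joint measurability needed to justify the Fubini--Tonelli step: I must know that $(x,k,k_0)\mapsto f(x,kk_0^{-1})$ is measurable on the triple product with respect to the product of Baire $\sigma$-algebras. Because $K$ is not assumed metrizable, the Nedoma pathology warns that multiplication on $K$ need not be Borel measurable relative to the product Borel structure; this is precisely why the Baire $\sigma$-algebra is used. The map $m\colon (k,k_0)\mapsto kk_0^{-1}$ is continuous, hence a $\CH$-morphism $K\times^\CH K\to K$, so it pulls back Baire sets to Baire sets of $K\times^\CH K$; what remains is the inclusion $\Baire(K\times^\CH K)\subseteq \Baire(K)\otimes\Baire(K)$, which follows from the Stone--Weierstra{\ss} theorem (every $g\in C(K\times K)$ is a uniform limit of finite sums $\sum_i \phi_i(k)\psi_i(k_0)$, hence is $\Baire(K)\otimes\Baire(K)$-measurable, so every cozero set of $K\times^\CH K$ lies in $\Baire(K)\otimes\Baire(K)$). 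Granting this, $m$ is $\Baire(K)\otimes\Baire(K)$-to-$\Baire(K)$ measurable, $\id_X\times m$ is measurable into $X\times^\ConcMes K$, and composing with the measurable $f$ yields the required joint measurability; the Haar substitution and the finiteness of all measures then make every appeal to Fubini--Tonelli rigorous.
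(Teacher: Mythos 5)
Your proof is correct, and it rests on exactly the same central device as the paper's: instead of trying to intersect uncountably many conull sets (one for each $k_0$), you integrate the invariance hypothesis over $k_0 \in K$ against Haar measure and invoke the Fubini--Tonelli theorem together with the bi-invariance of $\Haar_K$. The execution differs in a mild but genuine way. The paper argues by duality: it computes $\langle [f],[g]\rangle = \int_K \langle [k_0^\op f],[g]\rangle\,d\Haar_K(k_0)$ for test functions $g$ pulled back from $K$ and identifies the result with the inner product against the fibrewise average $\bar f$, so that $[f]=[\bar f\circ\pi_X]$ after testing against a total family. You instead integrate the scalar identity $\|k_0^\op f - f\|_{L^2}^2=0$ in $k_0$ and reduce, via the change of variables $k'=kk_0^{-1}$, to the pointwise vanishing of the fibrewise variance $\int_K\int_K |f(x,k')-f(x,k)|^2\,d\Haar_K(k')\,d\Haar_K(k) = 2\bigl(\int_K|f(x,\cdot)|^2\,d\Haar_K - |\bar f(x)|^2\bigr)$. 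Your version is marginally more self-contained: it yields the almost-everywhere identity $f=\bar f\circ\pi_X$ directly, with no need to check that the test functions are total in $L^2(X\times^\ConcProb K)$. You are also more explicit than the paper about the one real measurability issue underlying either computation, namely that $(k,k_0)\mapsto kk_0^{-1}$ is measurable for the product of Baire $\sigma$-algebras because $\Baire(K\times^\CH K)\subseteq\Baire(K)\otimes\Baire(K)$ (your Stone--Weierstra{\ss} argument for this is correct and is precisely the reason the Baire $\sigma$-algebra is used throughout in place of the Borel one). Both proofs need this point to run Fubini--Tonelli, so making it explicit is a genuine improvement rather than a digression.
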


\begin{proof}  The potential subtlety here is that each $k_0^{\op} f$ may differ from $f$ on a null set that depends on $k_0^\op$, and that $k_0^\op$ can range over uncountably many values.  Fortunately these issues can be avoided by using duality and applying Fubini's theorem.  Indeed, for any $[g] \in L^2(K)\subset L^2(X \times^\ConcProb K)$, we have
\begin{align*}
     &\langle [f], [g] \rangle_{L^2(X \times^\ConcProb K)} \\
     &= \int_K \langle [k_0^\op f], [g] \rangle_{L^2(X \times^\ConcProb K)}\ d\Haar_K(k_0) \\
     &= \int_K \int_X \int_K f(x,kk_0^{-1}) g(k)\ d\Haar_K(k) d\mu_X(x) d\Haar_K(k_0) \\
     &= \int_X \int_K \left(\int_K f(x,k')\ d\Haar_K(k')\right) g(k)\ d\Haar_K(k) d\mu_X(x) 
\end{align*}
and thus $f$ is equal in $L^2(X \times^\ConcProb K)$ to the function $x \mapsto \int_K f(x,k')\ d\Haar_K(k')$, which lies in $L^2(X)$, and the claim follows.
\end{proof}

\subsection{Conditional elements}

The final pair of functors we need to define in Figure \ref{fig:categories} are the conditional functors $\Cond_Y \colon \ConcMes \to \Set$, $\CondTilde_Y \colon \CH \to \Set$ defined for any $\ProbAlg$-space $Y$.  

\begin{definition}[Conditional functors]\label{cond-def}  Let $Y$ be a $\ProbAlg$-space (or an object that can be casted to $\ProbAlg$, such as a $\ProbAlgG$-space).  
\begin{itemize}
\item[(i)] \cite{jt19} If $K$ is a $\ConcMes$-space, we define $$\Cond_Y(K) := \Hom_\AbsMes(\Inc(Y) \to K)$$ to be the $\Set$-space of all $\AbsMes$-morphisms from $\Inc(Y)_\AbsMes$ to $K_\AbsMes$; elements of $\Cond_Y(K)$ will be referred to as \emph{conditional elements} of $K$ (in contrast to the actual elements of $K$, which we now call \emph{classical elements}).  If $f \colon K \to L$ is a $\ConcMes$-morphism, we define $\Cond_Y(f) \colon \Cond_Y(K) \to \Cond_Y(L)$ to be the $\Set$-morphism $\Cond(f)(k) := f \circ k$ for all $k \in \Cond_Y(K)$, where we use the composition law in $\AbsMes$.
\item[(ii)] If $K$ is a $\CH$-space, we define $ \CondTilde_Y(K) := \Hom_\CH(\Stone(Y) \to K)$ to be the $\Set$-space of all $\CH$-morphisms from $\Stone(Y)$ to $K$.  If $f \colon K \to L$ is a $\CH$-morphism, we define $\CondTilde_Y(f) \colon \CondTilde_Y(K) \to \CondTilde_Y(L)$ to be the $\Set$-morphism $\Cond(f)(k) := f \circ k$ for all $k \in \CondTilde_{Y}(K)$.
\end{itemize}
\end{definition}

It is easy to see that $\Cond_Y \colon \ConcMes \to \Set$ is a functor.  Every classical element $k \in K$ gives rise to a conditional element $k \in \Cond_Y(K)$, defined via the pullback operation $k_\SigmaAlg$ by
$$ k_\SigmaAlg(E) = 1_{k \in E}$$
for all $E \in K_\SigmaAlg=\Baire(K)$; this gives a natural transformation from the forgetful functor $\Forget_{\ConcMes \to \Set}$ to $\Cond_Y$.  The significance of the spaces $\Cond_Y(K)$ for us is that they will be used to describe the components $\rho_\gamma$ of an abstract $K$-valued cocycle $\rho$ on a $\ProbAlgG$-space $Y$ when $K$ is a compact Hausdorff group.

From Theorem \ref{canon}(iv) we have the fundamental equivalence
\begin{equation}\label{rep}
\Cond_Y(K) \equiv \CondTilde_Y(K)
\end{equation}
for any $\CH$-space $K$; in fact a routine verification shows that we in fact have a natural isomorphism between $\CondTilde_Y$ and $\Cond_Y \circ \BaireFunc$.  Thus, every conditional element $k$ of $K$ has a canonical representation as a $\CH$-morphism $\tilde k \colon \Stone(Y) \to K$.  This representation plays a role analogous to "representation theorems" (also known as liftings, cf. \cite[Chapter  34]{fremlinvol3}), such as the one of Maharam \cite{maharam}, that represent various abstract measurable maps by concrete ones, see \cite[Corollary 5.8]{jt19}.  However, there are several advantages to the representation \eqref{rep}.  Firstly, the concrete representation $\tilde k$ is automatically continuous.  Secondly, $Y$ is not required to be modeled by a space with a complete $\sigma$-algebra.  Thirdly, the representation is canonical and functorial, in contrast to the Maharam theorem which does not assert any uniqueness of the representation and provides no functoriality properties.

As one application of \eqref{rep}, we see that for a $\ProbAlg$-space $Y$, we have
\begin{equation}\label{linf}
 L^\infty(Y) = L^\infty(\Stone(Y)) \equiv C(\Stone(Y)) = \bigcup_{R>0} \CondTilde_Y(B(0,R)) \equiv \bigcup_{R>0} \Cond_Y(B(0,R))
 \end{equation}
 where $B(0,R)$ is the closed unit ball of radius $R$ in $\C$ (and we view $\CondTilde_Y(B(0,R))$, $R>0$ as a nested increasing collection of sets, and similarly for $\Cond_Y(B(0,R))$).  Thus we can identify $L^\infty(Y)$ with the bounded conditional elements of $\C$. To put it another way, every bounded $\AbsMes$-morphism from $\Inc(Y)$ to $\C$ has a unique continuous extension to $\Stone(Y)$.

We present a useful technical lemma:

\begin{lemma}[Base change is a natural monomorphism]\label{base-change}  Let $\pi \colon X \to Y$ be a $\ProbAlg$-morphism.  Then the base change natural transformation $\pi^* \colon \CondTilde_Y \to \CondTilde_X$ that assigns to each $\CH$-space $K$ the $\Set$-morphism $\pi^* \colon \CondTilde_Y(K) \to \CondTilde_X(K)$ defined by $\pi^* k := k \circ \Stone(\pi)$ is a natural monomorphism.  In particular, the abstract pullback map $\pi^* \colon \Cond_Y(K) \to \Cond_X(K)$ is injective for any $\CH$-space $K$.
\end{lemma}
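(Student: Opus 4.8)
The plan is to verify that $\pi^*$ is a genuine natural transformation and then observe that, since both $\CondTilde_Y$ and $\CondTilde_X$ are $\Set$-valued functors, being a natural monomorphism is equivalent to being injective in each component (monomorphisms in a functor category valued in $\Set$ are computed pointwise). Naturality is a one-line check: for any $\CH$-morphism $f \colon K \to L$ and any $k \in \CondTilde_Y(K)$, associativity of composition gives $\CondTilde_X(f)(\pi^* k) = f \circ k \circ \Stone(\pi) = \pi^*(\CondTilde_Y(f)(k))$, so the relevant naturality square commutes. Thus the entire statement reduces to showing that each component $\pi^* \colon \CondTilde_Y(K) \to \CondTilde_X(K)$ is injective.

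The key input is the surjectivity of $\Stone(\pi) \colon \Stone(X) \to \Stone(Y)$ furnished by Theorem \ref{canon}(v). I would argue as follows: suppose $k_1, k_2 \in \CondTilde_Y(K) = \Hom_\CH(\Stone(Y) \to K)$ satisfy $\pi^* k_1 = \pi^* k_2$, that is, $k_1 \circ \Stone(\pi) = k_2 \circ \Stone(\pi)$ as $\CH$-morphisms $\Stone(X) \to K$. Given any point $y$ of $\Stone(Y)$, surjectivity produces a point $x$ of $\Stone(X)$ with $\Stone(\pi)(x) = y$, whence $k_1(y) = k_1(\Stone(\pi)(x)) = k_2(\Stone(\pi)(x)) = k_2(y)$. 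Since $y$ was arbitrary, $k_1 = k_2$, so $\pi^*$ is injective on this component. Phrased categorically, a surjection of sets is an epimorphism, and precomposition by an epimorphism is injective.

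Finally, the ``in particular'' assertion for $\Cond_Y(K) \to \Cond_X(K)$ follows immediately by transporting the above along the natural isomorphism $\Cond_Y(K) \equiv \CondTilde_Y(K)$ of \eqref{rep} (which, being induced by Theorem \ref{canon}(iv), intertwines the two pullback maps), since injectivity is preserved under conjugation by bijections. I do not anticipate a serious obstacle here: the only nontrivial ingredient is the surjectivity of $\Stone(\pi)$, which is precisely Theorem \ref{canon}(v), and everything else is formal manipulation of composition. The one point requiring a modicum of care is confirming that the equivalence \eqref{rep} genuinely carries $\pi^*$ on $\CondTilde$ to $\pi^*$ on $\Cond$, so that injectivity transfers faithfully; but this compatibility is built into the naturality of the canonical representation supplied by Theorem \ref{canon}(iv).
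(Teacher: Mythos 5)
Your proposal is correct and follows essentially the same route as the paper, whose entire proof is the observation that the claim follows from the surjectivity of $\Stone(\pi)$ (Theorem \ref{canon}(v)); you have simply spelled out the formal steps (naturality, pointwise injectivity via precomposition with a surjection, and transfer along \eqref{rep}) that the paper leaves implicit.
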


\begin{proof}  This follows from the surjectivity of $\Stone(\pi)$ (Theorem \ref{canon}(iv)).
\end{proof}

Another important property of the functor $\Cond_Y$ (or $\CondTilde_Y$) is that it preserves surjectivity:

\begin{lemma}[Surjectivity preservation]\label{surj}  Let $Y$ be a $\ProbAlg$-space.
\begin{itemize}
\item[(i)]  The $\SigmaAlg$-algebra $\Inc(Y)_\SigmaAlg$ is complete (that is, every collection of sets $(E_\alpha)_{\alpha \in A}$ in $\Inc(Y)_\SigmaAlg$ has a supremum $\bigvee_{\alpha \in A} E_\alpha$).
\item[(ii)]  The canonical model $\Stone(Y)$ is extremally disconnected (that is, the closure of every open subset of $\Stone(Y)$ is also open).
\item[(iii)]  If $f \colon Z \to W$ is a surjective $\CH$-morphism, then the $\Set$-morphism $\tilde f \colon \CondTilde_Y(Z) \to \CondTilde_Y(W)$ defined by $\tilde f(z) := f \circ z$ is also surjective.  By \eqref{rep}, we obtain a similar claim for $\Cond_Y$.
\end{itemize}
\end{lemma}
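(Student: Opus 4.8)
The plan is to establish the three parts in sequence, since (i) feeds into (ii) and (ii) feeds into (iii).

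For part (i), I would run a measure-exhaustion argument exploiting that $\mu_Y$ is faithful (strictly positive on nonzero elements). Given a family $(E_\alpha)_{\alpha \in A}$ in $\Inc(Y)_\SigmaAlg$, set $s := \sup\{\mu_Y(\bigvee_{\alpha \in S} E_\alpha) : S \subseteq A \text{ countable}\}$, where each countable join exists by $\sigma$-completeness. Choosing countable sets $S_n$ whose joins have measure approaching $s$ and putting $E := \bigvee_{\alpha \in S_\infty} E_\alpha$ for $S_\infty := \bigcup_n S_n$ (still countable), I get $\mu_Y(E) = s$. If some $E_\beta$ failed to lie below $E$, then $E_\beta \wedge \neg E$ would be nonzero, hence of positive measure, so adjoining $\beta$ to $S_\infty$ would yield a countable join of measure strictly exceeding $s$, a contradiction. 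Thus $E$ is an upper bound, and it is plainly the least one, since any upper bound dominates the countable join $E$. This is exactly completeness.

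For part (ii), I would use that (by the Stone-space construction in Theorem \ref{canon}) $\Stone(Y)$ is, up to homeomorphism, the Stone space of the Boolean algebra $\Inc(Y)_\SigmaAlg$: its clopen sets correspond bijectively to elements $E \in \Inc(Y)_\SigmaAlg$ (via the idempotents $1_E \in C(\Stone(Y)) \equiv L^\infty(Y)$) and form a basis. Given an open $U \subseteq \Stone(Y)$, I would write $U$ as the union of the clopen sets $C$ it contains, form $E := \bigvee_C C$ using part (i), and let $\hat E$ be the associated clopen set. Then $\overline U \subseteq \hat E$ because $\hat E$ is closed and contains $U$; for the reverse inclusion, any nonempty clopen $D \subseteq \hat E \setminus \overline U$ would be disjoint from every such $C$, so by the infinite distributive law $D \wedge E = \bigvee_C (D \wedge C) = 0$ valid in any complete Boolean algebra, whence $D = D \wedge \hat E = 0$, a contradiction. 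Hence $\overline U = \hat E$ is clopen, so $\Stone(Y)$ is extremally disconnected.

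For part (iii), the decisive input is Gleason's theorem that the extremally disconnected compact Hausdorff spaces are precisely the projective objects of $\CH$. Given a surjective $\CH$-morphism $f \colon Z \to W$ and an element $w \in \CondTilde_Y(W) = \Hom_\CH(\Stone(Y) \to W)$, part (ii) makes $\Stone(Y)$ extremally disconnected, hence projective, so $w$ lifts along $f$ to a $\CH$-morphism $z \colon \Stone(Y) \to Z$ with $f \circ z = w$; then $z \in \CondTilde_Y(Z)$ and $\tilde f(z) = w$, giving surjectivity, and the claim for $\Cond_Y$ follows through the natural isomorphism \eqref{rep}. The main obstacle is not any single computation: part (i) is a routine exhaustion but must genuinely invoke faithfulness of $\mu_Y$, and part (ii) hinges on the clopen-versus-algebra-element dictionary together with the infinite distributive law. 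The real weight sits in (iii), where everything is reduced to Gleason's projectivity theorem, the deep external ingredient on which the paper's entire lifting mechanism rests.
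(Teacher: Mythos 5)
Your proposal is correct and follows essentially the same route as the paper: a measure-exhaustion argument using the faithfulness of $\mu_Y$ for (i), the Stone-duality dictionary between elements of the complete Boolean algebra and clopen sets for (ii), and Gleason's projectivity theorem for (iii). The only cosmetic differences are that you exhaust over countable joins where the paper uses finite joins and a limiting sequence, and you spell out the closure computation in (ii) that the paper leaves as "routine".
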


\begin{proof}  The claim (i) is well known: if $(E_\alpha)_{\alpha \in A}$ is a family in $\Inc(Y)_\SigmaAlg$, we let ${\mathcal F}$ be the collection of all finite joins of the $E_\alpha$, and we then extract a sequence $F_n$ in ${\mathcal F}$ whose measures $\mu_Y(F_n)$ converges to $\sup_{F \in {\mathcal F}} \mu_Y(F)$.  If one then sets $F := \bigvee_n F_n$, one can readily check that $\mu_Y(F \backslash E_\alpha) = 0$ for all $\alpha \in A$, and hence (as $Y$ is a $\ProbAlg$-space) $E_\alpha$ is contained in $F$, so that $F$ is an upper bound for the $E_\alpha$.  Conversely, any upper bound for the $E_\alpha$ has to have measure at least $\sup_{F \in {\mathcal F}} \mu_Y(F)$; by another appeal to the fact that $Y$ is a $\ProbAlg$-space, we conclude that $F$ is a least upper bound to the $E_\alpha$, giving the claim.

The claim (ii) follows from (i) and the well-known fact that the Stone dual of a complete Boolean algebra is extremally disconnected.  Indeed, any open subset of $\Stone(Y)$ is the union of clopen sets, each of which is associated via Stone duality to an element $E_\alpha, \alpha \in A$ of $\Inc(Y)_\SigmaAlg$.  If we take $F := \bigvee_{\alpha \in A} E_\alpha$ to be the supremum of the $E_\alpha$, it is routine to verify that the clopen set associated to $F$ is the closure of the union of the clopen sets associated to $E_\alpha$ giving the claim.

From (ii) and a theorem of Gleason \cite[Theorem 2.5]{gleason}, we conclude that $\Stone(Y)$ is projective in the category $\CH$; that is to say, given a surjective $\CH$-morphism $f \colon Z \to W$, any $\CH$-morphism $w \colon Y \to W$ has a lift $z \colon Y \to Z$ such that $f \circ z = w$.  This gives (iii).
\end{proof}

\section{Lifting conditional elements from group quotients}\label{lift-sec}

Let $Y$ be a $\ProbAlg$-space, and let $K$ be a compact Hausdorff group.  Then $\CondTilde_Y(K) = \Hom_\CH(\Stone(Y) \to K)$ obviously has the structure of a group (and hence, by \eqref{rep}, so does $\Cond_Y(K)$). If $H$ is a closed subgroup of $K$ (not necessarily normal), then $\CondTilde_Y(H)$ can be identified with a subgroup of $\CondTilde_Y(K)$ in the obvious fashion, so we may form the quotient spaces
$$ \CondTilde_Y(K) /  \CondTilde_Y(H), 
\CondTilde_Y(H) \backslash  \CondTilde_Y(K).$$
Meanwhile, the quotient spaces $K/H, H \backslash K$ have the structure of $\CH$-spaces, so we may also form the spaces
$$ \CondTilde_Y(K/H), \CondTilde_Y(H \backslash K).$$
The projection map from $K$ to $K/H$ is a $\CH$-morphism, hence induces a corresponding map from $\CondTilde_Y(K)$ to $\CondTilde_Y(K/H)$.  Any two elements of the group $\CondTilde_Y(K)$ that differ on the right by an element of the subgroup $\CondTilde_Y(H)$ can easily be seen to map to the same element of $\CondTilde_Y(K/H)$.  Thus we have a canonical map from $ \CondTilde_Y(K) /  \CondTilde_Y(H)$ to $\CondTilde_Y(K/H)$, and similarly a canonical map from
 $\CondTilde_Y(H) \backslash \CondTilde_Y(K)$ to $\CondTilde_Y(H \backslash K)$.  It is easy to see that this canonical map is injective.  Surjectivity follows from Lemma \ref{surj}(iii), and we thus have the following innocuous-looking (but quite important) result:
 
\begin{theorem}[Lifting conditional elements]\label{lift}  With the notation as above, we have the identifications
\begin{align*}
  \CondTilde_Y(K) /  \CondTilde_Y(H)&= \CondTilde_Y(K/H),  \\
  \CondTilde_Y(H)  \backslash  \CondTilde_Y(K)&= \CondTilde_Y(H \backslash K).
\end{align*} 
In particular, by \eqref{rep}, we also have the identifications
\begin{align*}
  \Cond_Y(K) /  \Cond_Y(H) &= \Cond_Y(K/H),  \\
 \Cond_Y(H)  \backslash  \Cond_Y(K)&= \Cond_Y(H \backslash K).
\end{align*}
\end{theorem}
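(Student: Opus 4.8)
The plan is to prove the first identification $\CondTilde_Y(K)/\CondTilde_Y(H) = \CondTilde_Y(K/H)$ in detail, obtain the second by a symmetric argument with right cosets, and then transfer both to $\Cond_Y$ via the natural isomorphism \eqref{rep}. First I would fix the canonical map. The quotient map $q \colon K \to K/H$ is a surjective $\CH$-morphism, so functoriality of $\CondTilde_Y$ furnishes a $\Set$-morphism $\tilde q \colon \CondTilde_Y(K) \to \CondTilde_Y(K/H)$, $\tilde q(\kappa) = q \circ \kappa$. I would check this descends to the coset space: if $\kappa_2 = \kappa_1 \eta$ with $\eta \in \CondTilde_Y(H)$, then for every $s \in \Stone(Y)$ one has $\kappa_1(s)\eta(s)H = \kappa_1(s)H$ (as $\eta(s) \in H$), so $\tilde q(\kappa_2) = \tilde q(\kappa_1)$. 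Hence $\tilde q$ factors through a well-defined map $\bar q$ from $\CondTilde_Y(K)/\CondTilde_Y(H)$ to $\CondTilde_Y(K/H)$, and it remains to show $\bar q$ is a bijection.

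For injectivity, suppose $\tilde q(\kappa_1) = \tilde q(\kappa_2)$, i.e.\ $\kappa_1(s)H = \kappa_2(s)H$ for all $s$. Then the group element $\eta := \kappa_1^{-1}\kappa_2$ of $\CondTilde_Y(K)$ satisfies $\eta(s) = \kappa_1(s)^{-1}\kappa_2(s) \in H$ for every $s$. The key observation is that $\eta$ is genuinely a $\CH$-morphism $\Stone(Y) \to K$, since it is assembled from $\kappa_1, \kappa_2$ using the continuous inversion and multiplication of $K$; because $H$ is \emph{closed} in $K$, this continuous map with image in $H$ factors through the subspace $H$, so $\eta \in \CondTilde_Y(H)$. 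Thus $\kappa_2 = \kappa_1 \eta$ lies in the coset $\kappa_1 \CondTilde_Y(H)$, proving $\bar q$ is injective. This step is elementary, the only substantive point being the use of closedness of $H$.

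Surjectivity is the real content. Given $w \in \CondTilde_Y(K/H) = \Hom_\CH(\Stone(Y) \to K/H)$, I need a lift $\kappa \in \CondTilde_Y(K)$ with $q \circ \kappa = w$; this is exactly the assertion that $\tilde q$ is surjective, which is Lemma \ref{surj}(iii) applied to the surjective $\CH$-morphism $q \colon K \to K/H$. I expect this to be the main obstacle, but it has already been packaged: the lifting succeeds because $\Stone(Y)$ is projective in $\CH$ (Gleason's theorem), which in turn rests on the extremal disconnectedness of $\Stone(Y)$ coming from the completeness of $\Inc(Y)_\SigmaAlg$ in Lemma \ref{surj}(i)--(ii). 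Combining injectivity and surjectivity yields that $\bar q$ is a bijection, giving the first identification.

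The second identification $\CondTilde_Y(H) \backslash \CondTilde_Y(K) = \CondTilde_Y(H \backslash K)$ is proved in the same way using right cosets and the surjective quotient map $K \to H \backslash K$, $k \mapsto Hk$: the well-definedness and injectivity arguments run identically with the condition $\kappa_2 \kappa_1^{-1} \in \CondTilde_Y(H)$ replacing $\kappa_1^{-1}\kappa_2 \in \CondTilde_Y(H)$, and surjectivity again invokes Lemma \ref{surj}(iii). Finally, both identifications pass verbatim to $\Cond_Y$ through the natural isomorphism between $\CondTilde_Y$ and $\Cond_Y \circ \BaireFunc$ recorded in \eqref{rep}, which carries the group structures and the relevant quotient maps to their $\Cond_Y$-counterparts. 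The whole argument is thus formal apart from the projectivity/lifting input, which is where the structural hypotheses on the canonical model are genuinely used.
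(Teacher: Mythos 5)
Your proposal is correct and follows essentially the same route as the paper: the paper also constructs the canonical map induced by the quotient $K \to K/H$, notes it descends to cosets and is injective, and derives surjectivity from Lemma \ref{surj}(iii) (i.e.\ Gleason projectivity of $\Stone(Y)$), before transferring to $\Cond_Y$ via \eqref{rep}. Your only addition is to spell out the injectivity step (using closedness of $H$ to see that $\kappa_1^{-1}\kappa_2$ lands in $\CondTilde_Y(H)$), which the paper leaves as "easy to see."
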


\section{Extensions and skew-products in probability algebras}\label{product-sec}

In Definition \ref{concrete-ext} we defined the notion of a cocycle, skew-product, and extension in the  $\ConcProb_\Gamma$-category.  We can similarly define such concepts in the $\CHProb_\Gamma$-category:

\begin{definition}[$\CHProb$ skew-products and extensions]\label{chprob-ext}  We define the notions of a $\CHProb_\Gamma$-cocycle, $\CHProb_\Gamma$-homogeneous skew-product, $\CHProb_\Gamma$-group skew-product, $\CHProb_\Gamma$-homogeneous extension, and $\CHProb_\Gamma$-group extension exactly as in Definition \ref{concrete-ext}, replacing all occurrences of $\ConcProb$, $\ConcProb_\Gamma$ with $\CHProb$, $\CHProb_\Gamma$ respectively, and requiring the cocycle $\rho_\gamma$ (and also the coordinate map $\theta$) to be a $\CH$-morphism (i.e., continuous and measurable) rather than merely being a $\ConcMes$-morphism (i.e., measurable).  
\end{definition}

We can leverage this definition using the canonical model  functor $\Stone$ to define the analogous notions in $\ProbAlg$:

\begin{definition}[$\ProbAlgG$ skew-products and extensions]\label{abstract-ext}  Let $\Gamma$ be a group, Let $Y = (Y_\ProbAlg, T_Y)$ be a $\ProbAlgG$-system,  let $K$ be a compact Hausdorff group, and let $L$ be a closed subgroup of $K$.
\begin{itemize}
    \item[(i)]  A \emph{$K$-valued $\ProbAlgG$-cocycle} on $Y$ is a tuple $\rho = (\rho_\gamma)_{\gamma \in \Gamma}$ of conditional elements $\rho_\gamma \in \Cond_Y(K)$ of $K$ that obeys the \emph{cocycle equation}
    \begin{equation}\label{cocycle} 
    \rho_{\gamma  \gamma'} = (\rho_\gamma \circ \Inc(T_Y^{\gamma'})) \rho_{\gamma'}
    \end{equation}
    for all $\gamma,\gamma' \in \Gamma$, using the group structure on $\Cond_Y(K)$.  We define the \emph{canonical representation} $\tilde \rho = (\tilde \rho_\gamma)_{\gamma \in \Gamma}$ of this cocycle to be the elements $\tilde \rho_\gamma \in \CondTilde(Y)$ associated to $\rho_\gamma$ by the natural isomorphism in \eqref{rep}.  Note that $\tilde \rho$ is a $K$-valued $\CHProb_\Gamma$-cocycle on $\Stone(Y)$.
    \item[(ii)] If $\rho$ is a $K$-valued $\ProbAlgG$-cocycle, we define the \emph{$\ProbAlgG$-group skew-product} $Y \rtimes^{\ProbAlgG}_\rho K$ and  \emph{$\ProbAlgG$-homogeneous skew-product} $Y \rtimes^{\ProbAlgG}_\rho K/L$ by the formulae
    $$ Y \rtimes^{\ProbAlgG}_\rho K := ( \Stone(Y) \rtimes^{\CHProb_\Gamma}_{\tilde \rho} K )_{\ProbAlgG}$$
    and 
    $$ Y \rtimes^{\ProbAlgG}_\rho K/L := ( \Stone(Y) \rtimes^{\CHProb_\Gamma}_{\tilde \rho} K/L )_{\ProbAlgG}.$$
    \item[(iii)]  An \emph{$\ProbAlgG$-homogenous extension} of $Y$ by $K/L$ is a tuple $(X, \pi, \theta, \rho)$, where $(X, \pi)$ is a $\ProbAlgG$-extension of $Y$, the \emph{vertical coordinate} $\theta \in \Cond_X(K/L)$ is such that $\Inc(\pi), \theta$ jointly generate $\Inc(X)_\SigmaAlg$, and $\rho = (\rho_\gamma)_{\gamma \in \Gamma}$ is a $K$-valued $\ProbAlgG$-cocycle such that
    \begin{equation}\label{theta-gam-abstract}
    \theta \circ \Inc(T_X^\gamma) = (\rho_\gamma \circ \Inc(\pi)) \theta
    \end{equation}
    for all $\gamma \in \Gamma$, using the action of $\Cond_X(K)$ on $\Cond_X(K/L)$ arising from Theorem \ref{lift}.  If $L$ is trivial, we refer to such a tuple as a \emph{$\ProbAlgG$-group extension} of $Y$ by $K$.
\end{itemize}
\end{definition}

Note that every $\ProbAlgG$-group skew-product $X = Y \rtimes^{\ProbAlgG}_\rho K$ is also a $\ProbAlgG$-homogeneous skew-extension, with factor map
$$ \pi :=  \Cast_{\CHProb_\Gamma \to \ProbAlgG}( {\pi}_{\Stone(Y)} )$$
and vertical coordinate
$$ \theta_X := \iota \circ \Inc( {\pi}_K ),$$
where ${\pi}_{\Stone(Y)} \colon \Stone(Y) \times^{\CHProb} K \to \Stone(Y)$, ${\pi}_K \colon \Stone(Y) \times^{\CHProb} K \to K$ are the canonical coordinate $\CHProb$-morphisms,
and $\iota \colon \Inc(K) \to K$ is the canonical $\AbsMes$-inclusion.  

Next, observe that a $\CHProb_\Gamma$-group skew-product $X = Y \rtimes^{\CHProb_\Gamma}_\rho K$ can automatically be promoted to a $\CHProb_{\Gamma \times K^\op}$-system, with the action of $\Gamma \times K^{\op}$ given by
    $$ T_X^{(\gamma,k_0^\op)}(y,k) := (T_Y^\gamma(y), \rho_\gamma(y) kk_0)$$
for $\gamma \in \Gamma$, $k_0^\op \in K^\op$, $y \in Y_\Set$, and $k \in K$.
One easily checks (using the Fubini--Tonelli theorem and the bi-invariance of Haar measure $\Haar_K$) that this is indeed a promotion of $X$ to a $\CHProb_{\Gamma \times K^{\op}}$-system (note it is necessary to work with the reversed group $K^\op$ here in order to maintain the group action property).  As a consequence, any $\ProbAlgG$-group skew-product $X = Y \rtimes^{\ProbAlgG}_\rho K$ can similarly be promoted to a $\ProbAlgGK$-system.

We close this section with a criterion for determining when a homogeneous extension is equivalent to a skew-product.

\begin{proposition}[Criterion for skew-product]\label{skew-crit} Let $Y = (Y_\ProbAlg, T_Y)$ be a $\ProbAlgG$-system, and let $K$ be a compact Hausdorff group, and let $L$ be a closed subgroup of $K$.  We view $K/L$ as a $\CHProb$-space, equipped with (Baire--Radon) Haar probability measure $\Haar_{K/L}$. Let $(X, \pi, \theta, \rho)$ be a $\ProbAlgG$-homogeneous extension of $Y$ by $K/L$.  Suppose that one has
\begin{equation}\label{intx}
\int_X (\pi^* f) (g \circ \theta) = \left(\int_Y f\right) \left(\int_{K/L} g\right)
\end{equation}
for all $f \in L^\infty(Y)$ and $g \in C(K/L)$ (where we use \eqref{linf} to interpret $g \circ \theta \in \Cond_X(\C)$ as an element of $L^\infty(X)$).
Then $X$ is isomorphic as an extension of $Y$ in $\ProbAlgG$ (i.e., isomorphic in $(\ProbAlgG \downarrow Y)$) to $Y \rtimes^{\ProbAlgG}_\rho K/L$.
\end{proposition}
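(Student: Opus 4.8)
The plan is to work entirely at the level of canonical models and only cast down to $\ProbAlgG$ at the very end. Write $Z := \Stone(Y) \rtimes^{\CHProb_\Gamma}_{\tilde\rho} K/L$, so that by Definition \ref{abstract-ext}(ii) the target skew-product is $W := Y \rtimes^{\ProbAlgG}_\rho K/L = Z_{\ProbAlgG}$, whose underlying $\CHProb$-space is $\Stone(Y) \times^{\CHProb} K/L$ equipped with the product of $\mu_{\Stone(Y)}$ and $\Haar_{K/L}$. Let $\tilde\theta \colon \Stone(X) \to K/L$ be the canonical representation of $\theta \in \Cond_X(K/L)$ supplied by \eqref{rep}, and let $\Stone(\pi) \colon \Stone(X) \to \Stone(Y)$ be the image of $\pi$ under the canonical model functor. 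Using the universal property of the product in $\CH$, I would form the $\CH$-morphism
$$ \Phi := (\Stone(\pi), \tilde\theta) \colon \Stone(X) \to \Stone(Y) \times^{\CHProb} K/L $$
and show that it is measure-preserving and $\Gamma$-equivariant, so that its cast to $\ProbAlgG$ is the desired isomorphism over $Y$.

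First I would check that $\Phi$ pushes $\mu_{\Stone(X)}$ forward to the product measure, i.e.\ that $\Phi$ is a $\CHProb$-morphism $\Stone(X) \to Z_\CHProb$. For $f \in C(\Stone(Y)) = L^\infty(Y)$ and $g \in C(K/L)$ one has $\Phi^*(f \otimes g) = (\pi^* f)(g \circ \theta)$ (identifying $L^\infty(X) = L^\infty(\Stone(X))$ via \eqref{linf}), so the hypothesis \eqref{intx} states precisely that $\int_{\Stone(X)} \Phi^*(f\otimes g)$ equals $\left(\int_Y f\right)\left(\int_{K/L} g\right)$, which is the integral of $f \otimes g$ against the product measure. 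Since the functions $f \otimes g$ span a subalgebra of $C(\Stone(Y) \times^{\CHProb} K/L)$ that separates points and contains the constants, Stone--Weierstra{\ss} makes this subalgebra uniformly dense, and the Riesz representation theorem (Theorem \ref{rrt}) then forces $\Phi_* \mu_{\Stone(X)}$ to agree with the product measure on all of $C$, hence as Baire measures.

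Next I would verify equivariance, which is where the abstract framework pays off: thanks to \eqref{rep}, the cocycle identity \eqref{theta-gam-abstract} is \emph{literally} an equality of $\CH$-morphisms $\Stone(X) \to K/L$, namely $\tilde\theta \circ \Stone(T_X^\gamma) = (\tilde\rho_\gamma \circ \Stone(\pi))\,\tilde\theta$, with no almost-everywhere ambiguity to track. Combining this with the equivariance of $\pi$ (which, applying the functor $\Stone$ to $\pi \circ T_X^\gamma = T_Y^\gamma \circ \pi$, gives $\Stone(\pi) \circ \Stone(T_X^\gamma) = \Stone(T_Y^\gamma) \circ \Stone(\pi)$) and the skew $\Gamma$-action $T_Z^\gamma(y,kL) = (\Stone(T_Y^\gamma)y,\, \tilde\rho_\gamma(y)kL)$ on $Z$, I would read off $\Phi \circ \Stone(T_X^\gamma) = T_Z^\gamma \circ \Phi$, so that $\Phi$ is a $\CHProb_\Gamma$-morphism from $\Stone(X)$ (with the action $\Stone(T_X)$) to $Z$. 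Casting to $\ProbAlgG$ and using the natural isomorphism $\Cast_{\CHProb \to \ProbAlg}\circ\Stone \cong \ident_{\ProbAlg}$ of Theorem \ref{canon}(i) (in its $\Gamma$-equivariant form) to identify $\Stone(X)_{\ProbAlgG}$ with $X$, this yields a $\ProbAlgG$-morphism $\Psi \colon X \to W$; because the first component of $\Phi$ is $\Stone(\pi)$, composing $\Psi$ with the factor map $W \to Y$ returns $\pi$, so $\Psi$ is a morphism in $(\ProbAlgG \downarrow Y)$.

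Finally I would show $\Psi$ is an isomorphism. Injectivity of the underlying Boolean homomorphism $\Psi^* \colon \Inc(W)_\SigmaAlg \to \Inc(X)_\SigmaAlg$ is automatic because it is measure-preserving. For surjectivity, $\Psi^*$ carries the pullbacks from $Y$ onto the image of $\Inc(\pi)$ (since $\Psi$ lies over $Y$) and carries the vertical coordinate $\theta_W$ of $W$ to $\theta$ (since the second component of $\Phi$ is $\tilde\theta$, which represents $\theta$); as $\Inc(\pi)$ and $\theta$ jointly generate $\Inc(X)_\SigmaAlg$ by Definition \ref{abstract-ext}(iii) and the image of $\Psi^*$ is a $\sigma$-subalgebra, that image must be all of $\Inc(X)_\SigmaAlg$. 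Hence $\Psi^*$ is a Boolean isomorphism and $\Psi$ is an isomorphism in $(\ProbAlgG \downarrow Y)$. I expect no genuine analytic obstacle, since \eqref{intx} delivers measure-preservation directly; the one step demanding care is purely organizational, namely ensuring that the cocycle relation, the factoring over $Y$, and the generation hypothesis all survive the passage through $\Stone$ and the casting back down, which is exactly where the properties of the canonical model functor in Theorem \ref{canon} (especially (i) and (iv)) together with the identification \eqref{rep} do the work.
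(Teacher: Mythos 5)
Your proposal is correct and follows essentially the same route as the paper's proof: you build the same map $(\Stone(\pi),\tilde\theta)$ on canonical models, establish measure preservation from \eqref{intx} via Stone--Weierstra{\ss} and the Riesz representation theorem, check $\Gamma$-equivariance from the concrete form of \eqref{theta-gam-abstract}, and conclude bijectivity of the pullback from measure preservation plus the joint-generation hypothesis. No gaps.
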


\begin{proof}  
Applying the canonical model functor $\Stone$ to the $\ProbAlgG$-extension $(X, \pi, \theta, \rho)$ of $Y$ by $K/L$, we obtain a $\CHProb_\Gamma$-extension
$(\Stone(X), \Stone(\pi), \tilde \theta, \tilde \rho)$ of $\Stone(Y)$ by $K/L$, where $\tilde \rho$ is the functorial concrete representation of $\rho$ as a $K$-valued $\CHProb_\Gamma$-cocycle on $\Stone(Y)$, and $\tilde \theta \in \CondTilde_X(K/L)$ is the continuous representative of $\theta \in \Cond_X(K/L)$.  Let $\tilde \phi \colon \Stone(X) \to \Stone(Y) \times^\CH K/L$ be the $\CH$-morphism defined by
\begin{equation}\label{phidef}
\tilde \phi(\tilde x) := (\Stone(\pi)(\tilde x), \tilde \theta(\tilde x))
\end{equation}
for $\tilde x \in \Stone(X)_\Set$, then from \eqref{theta-gam-abstract} (viewed in the concrete model) we see that
 $\tilde \phi$ may be promoted to a $\CH_\Gamma$-morphism of extensions from $(\Stone(X), \Stone(\pi))$ to $(\Stone(Y) \rtimes_{\tilde \rho} K/L, \pi_{\Stone(Y)})$, where $\pi_{\Stone(Y)} \colon \Stone(Y) \rtimes_{\tilde \rho} K/L \to \Stone(Y)$ is the canonical $\CHProb_\Gamma$-morphism.  Meanwhile, the hypothesis \eqref{intx} implies that
$$ \int_{\Stone(X)} (f \otimes g) \circ \tilde \phi = \int_{\Stone(Y) \times^{\CHProb} K/L} f \otimes g$$
for all $f \in C(\Stone(Y))$ and $g \in C(K/L)$, where $f \otimes g \colon \Stone(Y) \times K/L$ is the $\CH$-morphism $(\tilde y,kL) \mapsto f(\tilde y) g(kL)$.  From the Stone--Weierstra{\ss} theorem, finite linear combinations of tensor products $f \otimes g$ are dense in $C(\Stone(Y) \times K/L)$, so on taking limits and using the Riesz representation theorem (Theorem \ref{rrt}) we conclude
that $\tilde \phi$ can be promoted to a $\CHProb$-morphism, and can therefore be promoted further to a $\CHProb_\Gamma$-morphism of extensions from $(\Stone(X), \pi)$ to $(\Stone(Y) \rtimes_{\tilde \rho} K/L, \pi_{\Stone(Y)})$.

Casting the $\CHProb_\Gamma$-morphism $\tilde \phi$ to $\ProbAlgG$ and applying canonical isomorphisms, we obtain a $\ProbAlgG$-morphism $\phi$ of extensions from $(X, \pi)$ to $(Y \rtimes_\rho K/L, \pi_Y)$, where $\pi_Y \colon Y \rtimes_\rho K/L \to Y$ is the canonical $\ProbAlgG$-morphism. The only remaining thing to do is to show that the morphism $\phi$ is in fact an \emph{isomorphism} in $\ProbAlgG$.  By chasing the definitions, we see that it suffices to show that the pullback map
$$ \Inc(\phi)_\SigmaAlg \colon \Inc(Y \times_\rho K/L)_\SigmaAlg \to \Inc(X)_\SigmaAlg$$
is bijective.  Injectivity is clear from the measure-preserving nature of $\phi$ (which ensures that non-zero elements map to non-zero elements), so it suffices to prove surjectivity.  But from \eqref{phidef} we see that the range of $ \Inc(\phi)_\SigmaAlg $ contains the range of $\Inc(\pi)_\SigmaAlg \colon \Inc(Y)_\SigmaAlg \to \Inc(X)_\SigmaAlg$, as well as the range of $\theta_\SigmaAlg \colon (K/L)_\SigmaAlg \to \Inc(Y)_\SigmaAlg$.  Since these two maps generate $\Inc(X)_\SigmaAlg$ by hypothesis, we obtain the required surjectivity.
\end{proof}

\section{First step: passing to a product system}\label{product-system}

After all the extensive preliminaries and setup, we are now ready to begin the proof of Theorem \ref{mackey-uncountable}(i); we will later deduce (ii) as a consequence of (i) starting in Section \ref{sixth}.  The strategy is to gradually build the diagram in Figure \ref{fig:diagram} by following the path sketched out in Section \ref{proof-method}.  Henceforth $\Gamma$ is a group, $K$ is a compact Hausdorff group, $Y$ is an ergodic $\ProbAlgG$-system, and $(X,\pi_{X \to Y})$ is an ergodic $\ProbAlgG$-group extension of $Y$ by $K$. 

Definition \ref{abstract-ext} provides us a vertical coordinate $\theta \in \Cond_X(K)$ and a $K$-valued $\ProbAlgG$ cocycle $\rho = (\rho_\gamma)_{\gamma \in \Gamma}$; \eqref{rep} then gives us a concrete model $\tilde \theta \in \CondTilde_X(K)$ of $\theta$, as well as a $K$-valued $\ConcProb_\Gamma$-cocycle $\tilde \rho = (\tilde \rho_\gamma)_{\gamma \in \Gamma}$.  

By Definition \ref{abstract-ext} again, we can construct the $\ProbAlgGK$-systems $X \rtimes_1 K = X \rtimes^{\ProbAlgG}_1 K$ and $Y \rtimes_\rho K = Y \rtimes^{\ProbAlgG}_\rho K$, where $1$ denotes the trivial cocycle.  Observe that the map $\tilde \Pi \colon \Stone(X) \rtimes^{\CHProb_\Gamma}_1 K \to \Stone(Y) \rtimes^{\CHProb_\Gamma}_{\tilde \rho} K$ defined by
\begin{equation}\label{Pi-def}
\tilde \Pi(x, k) := (\Stone(\pi_{X \to Y})(x), \tilde \theta(x) k)
\end{equation}
is a $\CHProb_{\Gamma \times K^\op}$-morphism (in particular, it preserves the action of $\Gamma \times K^{\op}$).  Moreover, one has the identities
\begin{equation}\label{factor-1-a}
{\pi_{\Stone(Y) \times^\CHProb K \to \Stone(Y)}} \circ \tilde \Pi =_{\CHProb} \Stone(\pi) \circ {\pi_{\Stone(X) \times^\CHProb K \to \Stone(X) }} 
\end{equation}
in $\CHProb$ and
\begin{equation}\label{factor-2-a} 
{\pi}_{\Stone(Y) \times^\CHProb K \to K} \circ \tilde \Pi = (\tilde \theta \circ {\pi_{\Stone(X) \times^\CHProb K \to \Stone(X)}}) 
{\pi}_{\Stone(X) \times^\CHProb K \to K}
\end{equation}
in the group $\CondTilde_{\Stone(X) \rtimes_1^\CHProb K}(K)$, where $\pi_{Z \to W}$ denotes various canonical $\CHProb$-projections between the indicated spaces $Z,W$; see Figure \ref{fig:stone-prod}.

 \begin{figure}
    \centering
    \begin{tikzcd}
    & K & K \\
    & \Stone(X) \rtimes_1 K \arrow[u,"\pi"] \arrow[ur, "\pi \circ \tilde \Pi"] \arrow[r, "\tilde \Pi"] \arrow [d, "\pi"] \arrow [dl, "\tilde \theta \circ \pi"'] & \Stone(Y) \rtimes_{\tilde \rho} K \arrow[u, "\pi"'] \arrow[d, "\pi"] \\ 
    K & \Stone(X) \arrow[l, "\tilde \theta"] \arrow[r, "\Stone(\pi)"] & \Stone(Y)
    \end{tikzcd}
    \caption{A commutative diagram in $\CH$ illustrating \eqref{factor-1-a}, \eqref{factor-2-a}.  Various subscripts and superscripts have been suppressed to reduce clutter.  The three $\CH$-morphisms $\pi \circ \tilde \Pi, \tilde \theta \circ \pi, \pi$ from $\Stone(X) \rtimes_1 K$ to $K$ lie in the group $\CondTilde_{\Stone(X) \rtimes_1^\CHProb K}(K)$, and the first is the product of the second and third.}
    \label{fig:stone-prod}
\end{figure} 

If we let $\Pi: X \rtimes_1 K \to Y \rtimes_\rho K$ be the cast of $\tilde \Pi$ to $\ProbAlgGK$, we conclude (using \eqref{rep}) that $\Pi \colon X \rtimes_1 K \to Y \rtimes_\rho K$ is a $\ProbAlgGK$-morphism obeying the identities
\begin{equation}\label{factor-1}
{\pi_{Y \rtimes_\rho K \to Y}} \circ \Pi = \pi_{X \to Y} \circ {\pi_{X \rtimes_1 K \to X}} 
\end{equation}
in $\ProbAlg$ and
\begin{equation}\label{factor-2} \theta_{Y \rtimes_\rho K} \circ \Inc(\Pi) = (\theta \circ \Inc({\pi_{X \rtimes_1 K \to X}})) \theta_{X \rtimes_1 K}
\end{equation}
in the group $\Cond_{X \rtimes_1 K}(K)$, where $\pi_{Y \rtimes_\rho K \to Y}, \pi_{X \rtimes_1 K \to X}$ are the canonical $\ProbAlgG$-morphisms with the indicated domains and codomains, and $\theta_{Y \rtimes_\rho K} \in \Cond_{Y \rtimes_\rho K}(K)$, 
$\theta_{X \rtimes_1 K} \in \Cond_{X \rtimes_1 K}(K)$ are the canonical coordinate functions.

 \begin{figure}
    \centering
    \begin{tikzcd}
    & K & K \\
    & \Inc(X \rtimes_1 K) \arrow[u,"\theta_{X \rtimes_1 K}"] \arrow[ur, "\theta_{Y \rtimes_\rho K} \circ \Inc(\Pi)", near end] \arrow[r, "\Inc(\Pi)"] \arrow [d, "\Inc(\pi)"] \arrow [dl, "\theta \circ \Inc(\pi)"'] & \Inc(Y \rtimes_{\rho} K) \arrow[u, "\theta_{Y \rtimes_\rho K}"'] \arrow[d, "\Inc(\pi)"] \\ 
    K & \Inc(X) \arrow[l, "\theta"] \arrow[r, "\Inc(\pi)"] & \Inc(Y)
    \end{tikzcd}
    \caption{A commutative diagram in $\AbsMes$ illustrating \eqref{factor-1}, \eqref{factor-2}.  Various subscripts and superscripts have been suppressed to reduce clutter.  The three $\AbsMes$-morphisms $\theta_{Y \rtimes_\rho K} \circ \Pi, \theta \circ \Inc(\pi), \theta_{X \rtimes_1 K}$ from $\Inc(X \rtimes_1 K)$ to $K$ lie in the group $\Cond_{X \rtimes_1 K}(K)$, and the first is the product of the second and third.}
    \label{fig:stone-prod-abs}
\end{figure}

\section{Second step: extracting the Mackey range}

We are now ready to build more of the diagram in Figure \ref{fig:diagram}.
In Section \ref{product-system} we have constructed the $\ProbAlgGK$-morphism $\Pi \colon X \rtimes_1 K \to Y \rtimes_\rho K$.  Applying the invariant factor functor $\Inv$, we obtain a $\ProbAlgGK$-morphism $\Inv(\Pi) \colon \Inv(X \rtimes_1 K) \to \Inv(Y \rtimes_\rho K)$.  We now study the $\ProbAlgG$-systems $\Inv(X \rtimes_1 K)$ and $\Inv(Y \rtimes_\rho K)$.  For the former we have the following application of Proposition \ref{rel}:

\begin{proposition}\label{triv-erg} Let $X$ be a $\ProbAlgG$-system and $K$ be a compact Hausdorff group.  Then, using the skew-product construction in Definition \ref{abstract-ext}, we have a $\ProbAlgGK$-isomorphism
$$\Inv( X \rtimes_1 K ) \equiv \Inv(X) \rtimes_1 K.$$
\end{proposition}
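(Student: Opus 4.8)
The plan is to identify $X \rtimes_1 K$ with the product of $X$ and $K$ in $\ProbAlg$ (the trivial cocycle making the skew-product an honest product), on which $\Gamma$ acts only through the first factor via $\gamma \mapsto (T_X^\gamma)^* \otimes \id$, while $K^\op$ acts only through the second factor by right translation. Under this identification the claimed isomorphism reduces to the assertion that the $\Gamma$-invariant subalgebra $\Inc(\Inv(X \rtimes_1 K))_\SigmaAlg$ of $\Inc(X \rtimes_1 K)_\SigmaAlg$ coincides with the subalgebra generated by the invariant sets $\Inc(\Inv(X))_\SigmaAlg$ of $X$ together with the Baire sets of $K$ — that is, with the image of $\Inc(\Inv(X) \rtimes_1 K)_\SigmaAlg$ under the pullback induced by the invariant-factor projection $\pi \colon X \to \Inv(X)$. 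Both subalgebras are inherited faithfully from $X \rtimes_1 K$ (measure-preservation makes the relevant pullbacks injective, as in the proof of Proposition \ref{skew-crit}), so once their equality is established the identity inclusion furnishes the desired $\ProbAlg$-isomorphism.

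I would establish this equality by passing to Hilbert spaces. Since the canonical model presents the product as a $\CHProb$-product with product measure $\mu_{\Stone(X)} \times \Haar_K$, there is a Koopman-equivariant identification $L^2(X \rtimes_1 K) \equiv L^2(X) \otimes L^2(K)$ under which the Koopman operator of $(\gamma,1)$ acts as $(T_X^\gamma)^* \otimes \id$. Fixing an orthonormal basis $(e_i)_i$ of $L^2(K)$ (which exists even when $K$ is non-metrizable and $L^2(K)$ is non-separable), every $f \in L^2(X \rtimes_1 K)$ has a unique expansion $f = \sum_i f_i \otimes e_i$ with $f_i \in L^2(X)$ and $\sum_i \|f_i\|^2 = \|f\|^2$, and $((T_X^\gamma)^* \otimes \id) f = \sum_i ((T_X^\gamma)^* f_i) \otimes e_i$. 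Hence $f$ is $\Gamma$-invariant if and only if each coefficient $f_i$ is $\Gamma$-invariant in $L^2(X)$. The $\Gamma$-invariant vectors of $L^2(X)$ are precisely $L^2(\Inv(X))$: any such vector $g$ satisfies $(T_X^\gamma)^* g = g$ for all $\gamma$, so (after taking real and imaginary parts) each super-level set of $g$ is fixed by every $(T_X^\gamma)^*$ and therefore lies in $\Inc(\Inv(X))_\SigmaAlg$, making $g$ measurable with respect to the invariant factor; the reverse inclusion is immediate. Consequently the $\Gamma$-invariant subspace of $L^2(X \rtimes_1 K)$ is exactly $L^2(\Inv(X)) \otimes L^2(K) \equiv L^2(\Inv(X) \rtimes_1 K)$.

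To convert this $L^2$-identity back into the required equality of $\SigmaAlg$-algebras, I would use that a set $E$ belongs to a given sub-$\SigmaAlg$-algebra $C$ precisely when its indicator $1_E$ lies in $L^2(C)$; equivalently, the idempotent ($\{0,1\}$-valued) vectors of $L^2(C)$ are exactly the indicators of sets in $C$. Applying this to the two subalgebras above, whose $L^2$-spaces have just been shown to coincide inside $L^2(X \rtimes_1 K)$, yields $\Inc(\Inv(X \rtimes_1 K))_\SigmaAlg = \Inc(\Inv(X) \rtimes_1 K)_\SigmaAlg$ as subalgebras of $\Inc(X \rtimes_1 K)_\SigmaAlg$, with matching measures. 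Finally I would verify equivariance: the two systems are compared through the common ambient system $X \rtimes_1 K$, on which the $K^\op$-action (right translation in $K$) commutes with the $\Gamma$-action and hence preserves the $\Gamma$-invariant subalgebra; since this is the $K^\op$-action carried by both $\Inv(X \rtimes_1 K)$ and $\Inv(X) \rtimes_1 K$, while $\Gamma$ acts trivially on both, the identification is automatically a $\ProbAlgGK$-isomorphism.

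The main obstacle is the Hilbert-space step: reducing $\Gamma$-invariance on the product to invariance of each Fourier coefficient via an orthonormal basis of $L^2(K)$, and identifying the $\Gamma$-invariant vectors of $L^2(X)$ with $L^2(\Inv(X))$. The appeal to an orthonormal basis is precisely what lets one handle uncountable $\Gamma$ and non-metrizable $K$ without separability hypotheses, since it sidesteps any need to control, simultaneously over all $\gamma$, the null sets on which invariance might otherwise fail. The remaining passages — the product identifications supplied by the canonical model, the injectivity of the pullbacks, and the recovery of sets from their indicators — are routine.
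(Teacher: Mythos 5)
Your proof is correct, and its overall skeleton matches the paper's: both reduce the proposition to the single Hilbert-space identity $L^2(\Inv(X)\rtimes_1 K)=L^2(\Inv(X\rtimes_1 K))$ inside $L^2(X\rtimes_1 K)$, then recover the equality of $\sigma$-complete Boolean algebras by passing to idempotents, with the $\Gamma\times K^\op$-equivariance coming for free from commutation of the two actions. Where you diverge is in how the $L^2$ identity is established. The paper argues by contradiction: a hypothetical $f\in L^2(\Inv(X\rtimes_1 K))$ orthogonal to $L^2(\Inv(X)\rtimes_1 K)$ is multiplied by arbitrary $g\in L^\infty(K)$, and the relative orthogonality lemma (Proposition \ref{rel}, applied to the factor map $X\rtimes_1 K\to X$) forces $fg\perp L^2(X)$, whence $f$ is orthogonal to the dense algebraic tensor product $L^2(X)\otimes L^\infty(K)$ and vanishes. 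You instead decompose $L^2(X\rtimes_1 K)\equiv L^2(X)\otimes L^2(K)$ along an orthonormal basis of $L^2(K)$, observe that the Koopman operators act as $(T_X^\gamma)^*\otimes\id$ and hence coefficientwise, and identify the fixed subspace directly as $L^2(\Inv(X))\otimes L^2(K)$ — the classical ``Fourier coefficient'' argument for $\Inv(A\times B)=\Inv(A)\times B$ when the action on the second factor is trivial. Your route is more explicit and self-contained (it bypasses Proposition \ref{rel} entirely and yields a description of the invariant subspace rather than only its identity with a known subspace), at the cost of invoking the Hilbert tensor-product structure and an ONB of the possibly non-separable $L^2(K)$; the paper's route is shorter given that Proposition \ref{rel} is already needed elsewhere (in Lemma \ref{integral-comp}). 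Your supporting steps — the identification of the $\Gamma$-fixed vectors of $L^2(X)$ with $L^2(\Inv(X))$ via super-level sets (which is unproblematic precisely because one works in $\ProbAlg$, where $L^2$-invariance for each $\gamma$ separately translates into invariance in the measure algebra with no accumulation of null sets), the density of $C(\Stone(X))\otimes C(K)$ furnished by Stone--Weierstra{\ss}, and the injectivity of the relevant pullbacks — are all sound.
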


\begin{proof}  Applying $\Stone$ to the $\ProbAlgG$-morphism from $X$ to $\Inv(X)$, we obtain a $\CHProb_\Gamma$-morphism from $\Stone(X)$ to $\Stone(\Inv(X))$, which then gives a $\CHProb_{\Gamma \times K^{op}}$-morphism from $\Stone(X) \rtimes_1 K$ to $\Stone(\Inv(X)) \rtimes_1 K$.  One can easily check that the pullback of any element of $L^\infty(\Stone(\Inv(X)) \rtimes_1 K)$ in $\Stone(X) \rtimes_1 K$ is invariant under the $\Gamma$ action, and thus can be identified with an element of $L^\infty(\Inv(\Stone(X) \rtimes_1 K))$.  Equivalently, we see that $L^\infty(\Inv(X) \rtimes_1 K)$ can be viewed as a subalgebra of $L^\infty(\Inv(X \rtimes_1 K))$, with both spaces being identifiable in turn with subalgebras of $L^\infty(X \rtimes_1 K)$. If we can show that 
\begin{equation}\label{lii}
L^\infty(\Inv(X) \rtimes_1 K) = L^\infty(\Inv(X \rtimes_1 K)),
\end{equation}
then on taking idempotents we obtain a one-to-one correspondence between the elements of the $\SigmaAlg$-algebra of $\Inv(X) \rtimes_1 K$ with the $\SigmaAlg$-algebra of $\Inv(X \rtimes_1 K)$, which then easily leads to the required $\ProbAlgGK$-isomorphism.

It remains to establish the identity \eqref{lii}.  Taking $L^2$ closures, it suffices to show that
$$L^2(\Inv(X) \rtimes_1 K) = L^2(\Inv(X \rtimes_1 K)),$$
viewing both spaces as subspaces of $L^2(X \rtimes_1 K)$.   If this is not the case, then there is a non-zero $f \in L^2(\Inv(X \rtimes_1 K))$ that is orthogonal to $L^2(\Inv(X) \rtimes_1 K)$. The latter space contains all products $gh$ with $g \in L^\infty(K)$ and $h \in L^2(\Inv(X))$ (where we embed $L^\infty(K)$ into $L^\infty(\Inv(X) \rtimes_1 K)$ and $L^2(\Inv(X))$ into $L^2(\Inv(X) \rtimes_1 K)$ in the obvious fashion).  We conclude that $fg$ is orthogonal to $L^2(\Inv(X))$ for all $g \in L^\infty(K)$.  On the other hand, $g \in L^\infty(\Inv(X \rtimes_1 K))$ and hence $fg \in L^2(\Inv(X \rtimes_1 K))$.  Applying Proposition \ref{rel}, we see that $fg$ is therefore also orthogonal to $L^2(X)$.  In particular, $f$ is orthogonal to the (algebraic) tensor product $L^2(X) \otimes L^\infty(K) = L^2(\Stone(X)) \otimes L^\infty(K)$, but this is dense in $L^2(X \rtimes_1 K) = L^2(\Stone(X) \times^\CHProb K)$, hence $f$ is orthogonal to itself, a contradiction.
\end{proof}

From Proposition \ref{triv-erg} and the ergodicity of $X$ we have the $\ProbAlgGK$-isomorphisms
$$ \Inv(X \rtimes_1 K) \equiv \Inv(X) \rtimes_1 K \equiv \mathrm{pt} \rtimes_1 K$$
where $\mathrm{pt}$ is a point in $\ProbAlgG$.  By chasing the definitions, $\mathrm{pt} \rtimes_1 K$ is $\ProbAlg_{\Gamma \times K^{\op}}$-isomorphic to the $\ConcProb_{\Gamma \times K^\op}$-system $K$ endowed with the concrete action
$$ T_K^{(\gamma,k_0^\op)}(k) := kk_0,$$
so we now have a $\ProbAlgGK$-morphism from $X \rtimes_1 K$ to $K$, which after chasing the definitions is seen to agree (in say $\AbsProb$, after applying $\Inv$) with the vertical coordinate $\theta_{X \rtimes_1 K}$.  We also have a $\ProbAlgGK$-morphism from $K$ to $\Inv(Y \rtimes_\rho K)$.  Applying a forgetful functor, this is also a $\ProbAlg_{K^\op}$-morphism $\pi$, thus $(\Inv(Y \rtimes_\rho K),\pi)$ is now a $\ProbAlg_{K^\op}$-factor of $K$.  We can classify such factors:

\begin{lemma}[Factors of $K$]\label{piw}  Every $(\ProbAlg_{K^\op},\pi)$-factor $Z$ of $K$ is $(K \downarrow \ProbAlg_{K^\op})$-isomorphic to $(H \backslash K, \pi_{H \backslash K})$ for some compact subgroup $H$ of $K$, where $K^\op$ acts on the $\ConcProb$-space $H \backslash K$ by $T^{k_0^\op}_{H \backslash K}(Hk) := Hkk_0$, and $\pi \colon K \to H \backslash K$ is the quotient map (with $H \backslash K$ and $\pi$ both casted to $\ProbAlg_{K^\op}$, and $H \backslash K$ equipped with Haar measure $\Haar_{H \backslash K}$).
\end{lemma}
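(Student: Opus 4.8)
The plan is to translate the factor $Z$ into a concrete subspace of $L^2(K)$ and classify it using the group structure of $K$ together with the Stone--Weierstra\ss{} theorem. Since $\pi\colon K\to Z$ is a $\ProbAlg_{K^\op}$-morphism, its pullback identifies $\Inc(Z)_\SigmaAlg$ with a sub-$\sigma$-algebra $\mathcal B$ of $\Inc(K)_\SigmaAlg$ that is invariant under the right-translation action $T_K^{k_0^\op}\colon k\mapsto kk_0$ (injectivity of the pullback being forced by measure preservation). Setting $V:=L^2(Z)$, I would record the three structural facts that drive the proof: $V$ is a closed subspace of $L^2(K)$; $V$ is invariant under the Koopman operators $R_{k_0}\colon f\mapsto (k\mapsto f(kk_0))$ of the right-translation action; and $A:=V\cap L^\infty(K)=L^\infty(Z)$ is a unital $*$-subalgebra with $V=\overline{A}^{L^2}$. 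The goal then becomes to produce a compact subgroup $H$ of $K$ with $V=L^2(H\backslash K)$, where I identify $L^2(H\backslash K)$ with the left-$H$-invariant functions $\{f: f(hk)=f(k)\ \forall h\in H\}\subseteq L^2(K)$.

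The second step is to show that $V$ contains a rich supply of continuous functions. For $f\in V$ and $\phi\in C(K)$ I would form the right convolution $(f*\phi)(k):=\int_K f(kg^{-1})\,\phi(g)\,d\Haar_K(g)=\int_K (R_{g^{-1}}f)(k)\,\phi(g)\,d\Haar_K(g)$. Because $g\mapsto R_{g^{-1}}f$ is a continuous $L^2(K)$-valued map taking values in the closed subspace $V$, the Bochner integral lands in $V$; and a change of variables shows $f*\phi$ is continuous. Taking a net of continuous approximate identities $\phi_i$ (built from Urysohn bump functions on the compact group $K$), one gets $f*\phi_i\to f$ in $L^2(K)$, so $V\cap C(K)$ is $L^2$-dense in $V$. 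Since $A$ is a $*$-algebra, $V\cap C(K)=A\cap C(K)$ is a right-invariant unital $*$-subalgebra of $C(K)$; let $\mathcal A$ denote its uniform closure, a closed right-invariant unital $*$-subalgebra of $C(K)$.

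The third step extracts $H$. By the Stone--Weierstra\ss{} theorem, $\mathcal A=C(K/\!\!\sim)$, where $k\sim k'$ means $f(k)=f(k')$ for all $f\in\mathcal A$; the $\sim$-classes are closed, and right-invariance of $\mathcal A$ makes $\sim$ right-invariant. Setting $H:=\{h\in K: h\sim 1\}$, I would check directly from right-invariance that $H$ is a closed (hence compact) subgroup and that the $\sim$-class of $k$ is exactly the right coset $Hk$; thus $\mathcal A=C(H\backslash K)$, the continuous left-$H$-invariant functions. Taking $L^2$-closures gives $V=\overline{\mathcal A}^{L^2}=L^2(H\backslash K)$: the inclusion $V\subseteq L^2(H\backslash K)$ holds since $V\cap C(K)\subseteq\mathcal A$, while the reverse holds because $\mathcal A$ is the uniform (hence $L^2$) closure of $V\cap C(K)\subseteq V$. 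Passing back from $L^2$ to the underlying $\sigma$-algebras (recovering $\mathcal B$ and $(H\backslash K)_\SigmaAlg$ as the idempotents of $A$ and of $L^\infty(H\backslash K)$) shows the two factors agree as sub-objects of $K$, which is precisely a $(K\downarrow\ProbAlg_{K^\op})$-isomorphism $Z\equiv (H\backslash K,\pi_{H\backslash K})$ intertwining the right $K$-actions.

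The main obstacle I anticipate is the second step: because $K$ is an arbitrary (non-metrizable) compact Hausdorff group carrying only the Baire $\sigma$-algebra and Baire--Radon Haar measure, one must justify that convolution by continuous functions both preserves $V$ and outputs continuous functions, and that continuous approximate identities exist and converge, all without appeal to a metric (equivalently, strong continuity of the right regular representation on $L^2(K)$ in the Baire setting). Once this density of $V\cap C(K)$ in $V$ is in hand, the Stone--Weierstra\ss{} and coset computations are routine; a secondary point to treat with care is checking that the coset identification is compatible with Haar measure, so that the final identification is genuinely an isomorphism in $\ProbAlg_{K^\op}$ rather than merely of Boolean algebras.
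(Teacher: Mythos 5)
Your proposal is correct and follows essentially the same route as the paper's proof: identify $L^2(Z)$ with a closed right-invariant subspace of $L^2(K)$, use convolution with continuous approximate identities to show the continuous functions in it are dense (the paper phrases the invariance of $L^2(Z)$ under convolution via closed convex hulls of right-translates rather than a Bochner integral, but the underlying continuity fact is the same), extract $H$ as the left symmetry group, and apply Stone--Weierstra{\ss} to conclude $L^2(Z)=L^2(H\backslash K)$ before passing to idempotents. The obstacle you flag (strong continuity of right translation on $L^2(K)$ in the non-metrizable Baire setting) is handled in the paper exactly as you suggest: first for continuous $f$ via uniform continuity of functions on a compact group viewed as a uniform space, then by density of $C(K)$ in $L^2(K)$, which follows from the Baire--Radon property of Haar measure.
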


We remark that a version of this lemma for separable (or equivalently, metrizable) $K$ was implicitly given in \cite{mackey}.

\begin{proof}  Using the pullback map associated to $\pi$ (or $\Stone(\pi)$), we can identify $L^2(Z)$ with a closed subspace of $L^2(K)$, which is then invariant under the right multiplication action of $K$.  As Haar measure is a Baire--Radon measure (see Theorem \ref{rrt}), we see from Urysohn's lemma or Lusin's theorem that $C(K)$ is dense in $L^2(K)$. By Young's inequality, this latter space is also closed under the convolution operation
$$ f*g(k) := \int_K f(k') g((k')^{-1} k)\ d\Haar_K(k').$$
If $\psi \in C(K)$ has total integral $\int_K \psi\ d\Haar_K = 1$ equal to one, then $f*\psi$ lies in the closed convex hull of all right-translates of $f$ for any $f \in L^2(K)$; this is clear for continuous $f$ by uniform continuity\footnote{While we do not assume $K$ to be metrizable, it is still a uniform space, so the usual theory of uniform continuity still applies.}, and the general case follows by a density argument.  In particular, we see from the right-invariance of $L^2(Z)$ that the convolution operator $f \mapsto f * \psi$ maps $L^2(Z)$ to $L^2(Z)$.  From the uniform continuity of $\psi$ we also see that $f*\psi$ is continuous.  One can construct a net $\psi_U$ of approximate identities supported on arbitrary neighbourhoods $U$ of the identity such that $f*\psi_U$ converges in $L^2(K)$ to $f$ for every $f \in L^2(K)$ (again, this is easiest to verify first for continuous $f$, with the general case then following by density), so in particular we see that every function in $L^2(Z)$ can be written as the  limit of functions in the space $\mathcal{A} := C(K) \cap L^2(Z)$.  In other words, $\mathcal{A}$ is dense in $L^2(Z)$.

Let $H$ be the left symmetry group of ${\mathcal A}$, that is to say $H$ is the collection of all $k_0 \in K$ such that $f(k_0 k) = f(k)$ for all $f \in {\mathcal A}$ and $k \in K$.  Clearly $H$ is a compact subgroup of $K$, and $\mathcal{A}$ may be viewed as a subspace of $C(H\backslash K) \subset L^2(H \backslash K)$.  It is also invariant under right translations by $K$.   From this and the definition of $H$ we see that $\mathcal{A}$ is a unital algebra that separates points, hence by the Stone--Weierstra{\ss} theorem it is dense in  $C(H\backslash K)$ in the uniform norm, hence dense in $L^2( H \backslash K )$.  Thus the $L^2(K)$ closure of $\mathcal{A}$ can be identified with $L^2( H \backslash K)$.  But this closure was already found to equal $L^2(Z)$, thus
$$ L^2(Z) \equiv L^2( H \backslash K ).$$
Inspecting the idempotent elements of both sides, we see that $Z$ can be identified in $\ProbAlg$ with $H \backslash K$; using the Koopman action one can promote this to a $\ProbAlg_{K^\op}$-identification, and the claim follows.
\end{proof}

Applying this lemma to our current situation, we conclude that there is a compact subgroup $H$ of $K$ such that $\Inv(Y \rtimes_\rho K)$ is $\ProbAlg_{K^\op}$-isomorphic to $H \backslash K$ for some compact subgroup $H$ of $K$; we can promote this to a $\ProbAlgGK$-isomorphism by letting $\Gamma$ act trivially on $H \backslash K$ (after first applying $\Stone$ to define the action concretely if desired).  We thus create a $\ProbAlgGK$-morphism  $\psi \colon Y \rtimes_\rho K \to H\backslash K$, completing the right-hand portion of the commutative diagram in Figure \ref{fig:diagram}.

\section{Third step: straightening the vertical coordinate}

Recall from Definition \ref{abstract-ext} that the $\ProbAlgG$-system $X$ is equipped with a vertical coordinate $\theta \in \Cond_X(K)$.  In this section we establish the following result, which uses the various morphisms in Figure \ref{fig:diagram} to straighten this vertical coordinate by using left multiplication from the factor $Y$ to move it into the Mackey range $H$.  The lifting result in Theorem \ref{lift} will play a critical role in this step.

\begin{proposition}[Straightening the vertical coordinate]\label{perfect}  There exists $\Phi \in \Cond_Y(K)$ such that $(\Phi \circ \Inc(\pi_{X \to Y})) \theta \in \Cond_X(H)$ (using the group law in $\Cond_X(K)$). 
\end{proposition}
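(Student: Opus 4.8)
The plan is to build $\Phi$ out of the data already assembled on the right-hand side of Figure \ref{fig:diagram}, namely the $\ProbAlgGK$-morphisms $\Pi\colon X\rtimes_1 K\to Y\rtimes_\rho K$ and $\psi\colon Y\rtimes_\rho K\to H\backslash K$. Write $q\colon K\to H\backslash K$ for the quotient map and, for $g\in\Cond_X(K)$, write $q\circ g\in\Cond_X(H\backslash K)$ for the image of $g$ under $\Cond(q)$. Unwinding the definition of $\Cond_X(H)$ through Theorem \ref{lift} (so that $\Cond_X(H)$ is exactly the fibre of $\Cond_X(K)\to\Cond_X(H\backslash K)$ over the identity coset), the desired conclusion $(\Phi\circ\Inc(\pi_{X\to Y}))\theta\in\Cond_X(H)$ is equivalent, using the equivariance $q\circ(g\theta)=(q\circ g)\cdot\theta$, to the identity
$$q\circ(\Phi\circ\Inc(\pi_{X\to Y})) = q\circ\theta^{-1}$$
in $\Cond_X(H\backslash K)$, where $\theta^{-1}$ is the inverse of $\theta$ in the group $\Cond_X(K)$. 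Thus it suffices to produce $\Phi\in\Cond_Y(K)$ whose pullback to $X$ projects to $q\circ\theta^{-1}$.

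First I would form the conditional element $\eta:=\psi\cdot\theta_{Y\rtimes_\rho K}^{-1}\in\Cond_{Y\rtimes_\rho K}(H\backslash K)$, where $\theta_{Y\rtimes_\rho K}\in\Cond_{Y\rtimes_\rho K}(K)$ is the canonical coordinate function and the dot denotes the action of $\Cond(K)$ on $\Cond(H\backslash K)$. Both factors are $K^\op$-equivariant --- $\psi$ because it lands in the $\Gamma$-invariant factor, on which $K^\op$ acts by right translation of $H\backslash K$, and $\theta_{Y\rtimes_\rho K}$ by the construction of the promoted $K^\op$-action --- so the two right translations cancel and $\eta$ is $K^\op$-invariant.

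The crux is to descend $\eta$ to a conditional element $\psi_0\in\Cond_Y(H\backslash K)$ with $\eta=\psi_0\circ\Inc(\pi_{Y\rtimes_\rho K\to Y})$. Forgetting the $\Gamma$-action, $Y\rtimes_\rho K$ is modelled by $\Stone(Y)\times^\ConcProb K$ with $K^\op$ acting purely by right translation, so Lemma \ref{trans} (applied with $\Stone(Y)$ in place of $X$) shows that every $K^\op$-invariant element of $L^\infty(Y\rtimes_\rho K)$ comes from $L^\infty(Y)$. Representing $\eta$ by a continuous map $\tilde\eta\colon\Stone(Y\rtimes_\rho K)\to H\backslash K$ and composing with each $\phi\in C(H\backslash K)$, the $K^\op$-invariant function $\phi\circ\tilde\eta$ is therefore pulled back from $\Stone(Y)$ along $\Stone(\pi_{Y\rtimes_\rho K\to Y})$; since $C(H\backslash K)$ separates points and $\Stone(\pi_{Y\rtimes_\rho K\to Y})$ is a surjection of compact Hausdorff spaces (Theorem \ref{canon}(v)), $\tilde\eta$ itself factors through it, producing $\psi_0$. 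I expect this descent to be the main obstacle: it is precisely the point where uncountability rules out pointwise selection, and one must instead route the invariance through the Fubini argument of Lemma \ref{trans} and the point-separation property of $C(H\backslash K)$.

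Finally I would lift $\psi_0$ through the surjection $\Cond_Y(K)\to\Cond_Y(H\backslash K)$ furnished by Theorem \ref{lift}, obtaining $\Phi\in\Cond_Y(K)$ with $q\circ\Phi=\psi_0$, and then verify the displayed identity by pulling $\eta$ back along $\Pi$. Using \eqref{factor-2} to rewrite $\theta_{Y\rtimes_\rho K}\circ\Inc(\Pi)$, together with the identity $\psi\circ\Inc(\Pi)=q\circ\theta_{X\rtimes_1 K}$ --- which holds because naturality of $\Inv$ together with Lemma \ref{piw} identifies $\Inv(\Pi)$, \emph{as a factor of $K$}, with the quotient map $q$, so that no stray constant intervenes --- a short computation in the group $\Cond_{X\rtimes_1 K}(K)$ gives $\eta\circ\Inc(\Pi)=(q\circ\theta^{-1})\circ\Inc(\pi_{X\rtimes_1 K\to X})$. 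On the other hand, $\eta=\psi_0\circ\Inc(\pi_{Y\rtimes_\rho K\to Y})$ and \eqref{factor-1} give $\eta\circ\Inc(\Pi)=(\psi_0\circ\Inc(\pi_{X\to Y}))\circ\Inc(\pi_{X\rtimes_1 K\to X})$. Both sides are pullbacks along $\pi_{X\rtimes_1 K\to X}$, which is injective by Lemma \ref{base-change}, so cancelling it yields $\psi_0\circ\Inc(\pi_{X\to Y})=q\circ\theta^{-1}$; since $q\circ\Phi=\psi_0$ and $\Cond(q)$ commutes with pullback, this is exactly the required $q\circ(\Phi\circ\Inc(\pi_{X\to Y}))=q\circ\theta^{-1}$, completing the proof.
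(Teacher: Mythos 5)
Your proposal is correct and follows essentially the same route as the paper: form the $K^\op$-invariant element $\psi\,\theta_{Y\rtimes_\rho K}^{-1}$, descend it to $\psi_0\in\Cond_Y(H\backslash K)$ via Lemma \ref{trans}, lift to $\Phi$ by Theorem \ref{lift}, and pull back along $\Pi$ using \eqref{factor-1}, \eqref{factor-2} and Lemma \ref{base-change}. The only differences are cosmetic --- you carry the identity in the form $q\circ(\Phi\circ\Inc(\pi_{X\to Y}))=q\circ\theta^{-1}$ and run the descent through continuous test functions on $H\backslash K$ rather than indicators of Baire sets, but the key lemmas and the structure of the computation coincide with the paper's.
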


\begin{proof}  As $\psi \colon Y \rtimes_\rho K \to H \backslash K$ is a $\ProbAlgGK$-morphism, it is also a $\ProbAlg_{K^\op}$-morphism, thus for any $k_0^{\op} \in K^{\op}$ we have
$$ T_{H \backslash K}^{k_0^\op} \circ \psi = \psi \circ T_{Y \rtimes_\rho K}^{k_0^\op}.$$
If we let $\tilde \psi \in \Cond_{Y \rtimes_\rho K}(H \backslash K)$ be the conditional element of $H \backslash K$ defined by
$$ \tilde \psi := \iota \circ \Inc(\psi)$$
where $\iota \colon \Inc(H \backslash K) \to H \backslash K$ is the canonical $\AbsProb$-inclusion, we conclude that
$$ \tilde \psi k_0 = (T_{Y \rtimes_\rho K}^{k_0^\op})^* \tilde \psi$$
in $\Cond_{Y \rtimes_\rho K}(H \backslash K)$, where we view $k_0$ as an element of $\Cond_{Y \rtimes_\rho K}(K)$ which acts on the right on $\Cond_{Y \rtimes_\rho K}(H \backslash K)$.  Meanwhile, the standard vertical coordinate $\theta_{Y \rtimes_\rho K} \in \Cond_{Y \rtimes_\rho K}(K)$ obeys the similar identity
$$ \theta_{Y \rtimes_\rho K} k_0 = (T_{Y \rtimes_\rho K}^{k_0^\op})^* \theta_{Y \rtimes_\rho K},$$
in $\Cond_{Y \rtimes_\rho K}(K)$, as can be seen by first observing the identity
$$ \pi_K k_0 =_{\CH} \left(T_{\Stone(Y) \rtimes^{\CHProb_\Gamma}_{\tilde \rho} K}^{k_0^\op}\right)^* \pi_K$$
in the group $\Hom_\CH( \Stone(Y) \rtimes_\rho^{\CHProb_\Gamma} K \to K )$, with $\pi_K \colon \Stone(Y) \rtimes_\rho^{\CHProb_\Gamma} K \to K$ the canonical $\CH$-projection, and then abstracting.  Thus if we introduce the $\Cond_{Y \rtimes_\rho K}(H \backslash K)$-element
\begin{equation}\label{psi1-def}
 \psi_1 := \tilde \psi \theta_{Y \rtimes_\rho K}^{-1}
\end{equation}
we have the identity
$$ (T_{Y \rtimes_\rho K}^{k_0^\op})^* \psi_1 = \psi_1$$
in $\Cond_{Y \rtimes_\rho K}(H \backslash K)$ for all $k_0^\op \in K^\op$.  If we were in a concrete setting we could immediately imply that $\psi_1$ descends from $Y \rtimes_\rho K$ to $Y$.  In the current abstract setting we have to be slightly more\footnote{The problem here is that the concrete model $\Stone(Y) \rtimes^{\CHProb_\Gamma}_{\tilde \rho} K$ of $Y \rtimes_\rho K$ does not agree with the \emph{canonical} model $\Stone(Y \rtimes_\rho K)$, and the nature of the $K^{\op}$ action on the latter model is not completely obvious.  Hence one cannot simply apply the canonical model functor to study the $K^{\op}$ action in a concrete setting.} careful.  For any $E \in (H \backslash K)_\SigmaAlg$, we can view $1_{(\psi_1)_\SigmaAlg(E)}$ as an element of $L^2(Y \times^{\ProbAlg} K) = L^2(\Stone(Y) \times^\ConcProb K)$ which is invariant under the action of $K^\op$.  By Lemma \ref{trans}, such elements must arise from $L^2(\Stone(Y)) = L^2(Y)$, hence $(\psi_1)_\SigmaAlg(E)$ lies in $\Inc(Y)_\SigmaAlg$ (embedded into $\Inc(Y \rtimes_\rho K)_\SigmaAlg$ in the obvious fashion).  Thus we have $\psi_1 = \psi_0 \circ \Inc(\pi_Y)$ for some $\psi_0 \in \Cond_Y(H \backslash K)$, where $\pi_Y \colon Y \rtimes_\rho K \to Y$ is the canonical $\ProbAlg$-projection. Inserting this into \eqref{psi1-def}, we conclude that
$$
\psi = (\psi_0 \circ \Inc(\pi_Y)) \theta_{Y \rtimes_\rho K}$$
in $\Cond_{Y \rtimes_\rho K}(H \backslash K)$.  Applying Theorem \ref{lift}, one can lift $\psi_0 \in \Cond_Y(H \backslash K)$ to a conditional element $\Phi \in \Cond_Y(K)$, thus $\psi_0 = H \Phi$ where we view $H$ as the identity element of $H \backslash K$ (and naturally identified with an element of $\Cond_Y(H \backslash K)$).  Thus
$$ \psi = H (\Phi \circ \Inc(\pi_Y)) \theta_{Y \rtimes_\rho K}$$
in $\Cond_{Y \rtimes_\rho K}(H \backslash K)$.  Performing a base change to $X \rtimes_1 K$ using Figure \ref{fig:diagram} and \eqref{Pi-def} (or \eqref{factor-1}, \eqref{factor-2}), we conclude that
\begin{align*}
 H \theta_{X \rtimes_1 K} &= H (\Phi \circ \Inc(\pi_Y \circ \Pi)) (\theta_{Y \rtimes_\rho K} \circ \Inc(\Pi)) \\
&= H (\Phi \circ \Inc(\pi_{X \to Y} \circ \pi_X)) (\theta \circ \Inc(\pi_X)) \theta_{X \rtimes_1 K} 
\end{align*}
in $\Cond_{X \rtimes_1 K}(H \backslash K)$, where $\pi_X \colon X \rtimes_1 K \to X$ is the canonical $\ProbAlg$-projection. We can rearrange this as
$$ H = ( H (\Phi \circ \Inc(\pi_{X \to Y}) \theta ) \circ \Inc(\pi_X)$$
in  $\Cond_{X \rtimes_1 K}(H \backslash K)$.  By Lemma \ref{base-change}, we conclude that
$$ H = H (\Phi \circ \Inc(\pi_{X \to Y}) \theta ) $$
in $\Cond_X(H \backslash K)$.  By Theorem \ref{lift}, we conclude that
$$\Phi \circ \Inc(\pi_{X \to Y}) \theta \in \Cond_X(H)$$
as required.
\end{proof}

\section{Fourth step: A Fubini--Tonelli calculation}\label{fourth-step}

By Proposition \ref{perfect}, we can find $\Phi \in \Cond_Y(K)$ such that the modified vertical coordinate
\begin{equation}\label{theta-star}
\theta_* := (\Phi \circ \Inc({\pi_{X \to Y}})) \theta 
\end{equation}
lies in $\Cond_X(H)$.  We will now show that the maps $\pi_{X \to Y}, \theta_*$ generate a $\ProbAlgG$-isomorphism between $X$ and a skew-product $Y \rtimes_{\rho_*} H$ for some cocycle $\rho_*$, which will establish Theorem \ref{mackey-uncountable}(i).

By \eqref{rep}, we can canonically model $\theta_*$ by a concrete $\CH$-morphism $\tilde \theta \in \CondTilde_{\Stone(X)}(H)$.  We now have the following key computation:

\begin{lemma}[Computation of integral]\label{integral-comp} We have
$$ \int_{\Stone(X)} (f \circ \Stone(\pi_{X \to Y})) (g \circ \tilde \theta) = \left(\int_{\Stone(Y)} f\right) \left(\int_H g\right)$$
for all $f \in C(\Stone(Y))$ and $g \in C(H)$. 
\end{lemma}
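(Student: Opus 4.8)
The plan is to show that the pair $(\Stone(\pi_{X\to Y}),\tilde\theta)$ pushes $\mu_{\Stone(X)}$ forward to the product measure $\mu_{\Stone(Y)}\times\Haar_H$ on $\Stone(Y)\times^{\CHProb}H$; testing this measure against $f\otimes g$ is precisely the asserted identity. Since $\pi_{X\to Y}$ is measure-preserving, the $\Stone(Y)$-marginal of the pushforward is already $\mu_{\Stone(Y)}$, and splitting $g=(g-\int_H g\,d\Haar_H)+\int_H g\,d\Haar_H$ reduces the lemma to proving
\[
\int_{\Stone(X)}(f\circ\Stone(\pi_{X\to Y}))\,(g\circ\tilde\theta)=0
\qquad\text{whenever } f\in C(\Stone(Y)),\ g\in C(H),\ \int_H g\,d\Haar_H=0 .
\]
In other words, I want to show that the conditional law of $\tilde\theta$ over $Y$ is Haar measure on $H$.

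First I would lift the computation to the product system $X\rtimes_1 K$, working in its $\CHProb$-model $\Stone(X)\times^{\CHProb}K$ (with product measure), where the $K^\op$-action is transparent, exactly as in the proof of Lemma \ref{trans}. The straightened coordinate \eqref{theta-star} provides a measure-preserving $\Gamma\times K^\op$-morphism $(x,k)\mapsto(\Stone(\pi_{X\to Y})(x),\tilde\theta(x)k)$ onto $Y\rtimes_{\rho_*}K$ (the variant of $\Pi$ from \eqref{Pi-def} attached to the cohomologous, $H$-valued cocycle $\rho_*$), its measure-preservation being an instance of the left-invariance of $\Haar_K$. By Proposition \ref{triv-erg} and the ergodicity of $X$, the invariant factor $\Inv(X\rtimes_1 K)\equiv K$ is carried by the $K$-coordinate, while by the second-step analysis (Lemma \ref{piw}) $\Inv(Y\rtimes_{\rho_*}K)\equiv H\backslash K$ is the Mackey range; under the morphism the latter pulls back to the \emph{left-$H$-invariant} functions of the $K$-coordinate.

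The crucial input is Proposition \ref{rel}, applied to this morphism viewed as a $\ProbAlgG$-extension: since any left-$H$-mean-zero function $v$ of the $K$-coordinate lies in $L^2(\Inv(X\rtimes_1 K))$ and is orthogonal to $L^2(\Inv(Y\rtimes_{\rho_*}K))$, relative orthogonality forces it to be orthogonal in $L^2(X\rtimes_1 K)$ to the \emph{entire} pullback of $L^2(Y\rtimes_{\rho_*}K)$. Pairing $v$ against the pullback of $f\otimes u$ (with $f\in C(\Stone(Y))$ and $u\in L^2(K)$ arbitrary) and evaluating the inner $K$-integral by Fubini, which by definition is $\eta_{u,v}(\tilde\theta(x))$, yields
\[
\int_{\Stone(X)}(f\circ\Stone(\pi_{X\to Y}))\,(\eta_{u,v}\circ\tilde\theta)=0,
\qquad
\eta_{u,v}(h):=\int_K u(hk)\,\overline{v(k)}\,d\Haar_K(k),
\]
for all such $u$ and all left-$H$-mean-zero $v$. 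To finish, I would realise an arbitrary mean-zero $g\in C(H)$ as a uniform limit of functions $\eta_{u,v}|_H$: take $u$ to be any continuous extension of $g$ to $K$ and $v=\psi_U-\E_H\psi_U$, where $\psi_U$ runs over an approximate identity at the origin of $K$ and $\E_H$ is the left-$H$ averaging projection onto $L^2(H\backslash K)$ (so $v$ is left-$H$-mean-zero). As $U$ shrinks, $\eta_{u,\psi_U}\to g$ uniformly on $H$ by uniform continuity of $u$, while the correction $\eta_{u,\E_H\psi_U}$ is constant on $H$ and tends to $\int_H g\,d\Haar_H=0$ precisely because $g$ has mean zero; hence $\eta_{u,v}\to g$ uniformly on $H$. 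As $\tilde\theta$ takes values in $H$, uniform convergence and the vanishing of the displayed integrals give the required identity.

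The main obstacle is exactly this extraction of distributional information about $\tilde\theta$: any naive Fubini computation pushed through the morphism merely reconfirms that it is measure-preserving and \emph{washes $\tilde\theta$ out} through the left-invariance of Haar measure. The symmetry is broken only by the relative orthogonality of Proposition \ref{rel}, which is where the hypothesis that the Mackey range is \emph{exactly} $H\backslash K$ (rather than a larger quotient) genuinely enters; equivalently, this step shows that the conditional law of $\tilde\theta$ is right-$H$-invariant, hence Haar. A secondary technical point is that the $K^\op$-action is natural on the $\CHProb$-model $\Stone(X)\times^{\CHProb}K$ rather than on the canonical model $\Stone(X\rtimes_1 K)$, so the Hilbert-space manipulations must be carried out on the former, in the spirit of Lemma \ref{trans}.
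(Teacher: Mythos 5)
Your proposal is correct and follows essentially the same route as the paper: both reduce to the $H$-mean-zero case, lift to $X \rtimes_1 K$, identify $L^2(K) \ominus L^2(H\backslash K)$ with $L^2(\Inv(X \rtimes_1 K)) \ominus L^2(\Inv(Y \rtimes_\rho K))$, and invoke Proposition \ref{rel} as the crucial symmetry-breaking step, finishing with a Tietze extension and an approximate-identity convolution argument. The only cosmetic difference is that the paper splits the auxiliary kernel $\varphi$ into its left-$H$-invariant and $H$-mean-zero parts rather than splitting $g$ itself.
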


\begin{proof}  By the Tietze extension theorem we can work with $g \in C(K)$ instead of $g \in C(H)$.  Using approximations to the identity and uniform continuity, one can approximate $g$ to arbitrary accuracy in the uniform topology by convolutions 
$$ h \mapsto \int_K g(hk) \varphi(k)\ d\Haar_K(k)$$
for $\varphi, g \in C(K)$.  Thus by Fubini's theorem it suffices to show that
\begin{equation}\label{lam}
\begin{split}
&\int_{\Stone(X)} \int_K f(\Stone(\pi_{X \to Y}(\tilde x)) g( \tilde \theta(\tilde x) k) \varphi(k)\ d\Haar_K(k) \ d\mu_{\Stone(X)}(\tilde x) \\
&\quad = \left(\int_{\Stone(Y)} f\right) \left(\int_H \int_K g(hk) \varphi(k)\  d\Haar_K(k) d\Haar_H(h)\right)
\end{split}
\end{equation}
for $f \in C(\Stone(Y))$ and $\varphi,g \in C(K)$.
By splitting $\varphi$ into its average
$$ k \mapsto \int_H \varphi(hk)\ d\Haar_H(h)$$
and its mean zero part
$$ k \mapsto \varphi(k) - \int_H \varphi(hk)\ d\Haar_H(h),$$
it suffices to prove \eqref{lam} in two cases: firstly when we have the left-invariance
\begin{equation}\label{left}
\varphi(hk) = \varphi(k)
\end{equation}
for all $h \in H, k \in K$
and secondly when we have the mean zero condition
\begin{equation}\label{mean-zero}
\int_H \varphi(hk)\ d\Haar_H(h) = 0
\end{equation}
for all $k \in K$.

First suppose that we have the mean-zero condition \eqref{mean-zero}.  Then from the Fubini--Tonelli theorem and the invariance of Haar measure we see that
$$ \int_H \int_K g(hk) \varphi(k)\ d\Haar_H(h) d\Haar_K(k) = 0,$$
so that the right-hand side of \eqref{lam} vanishes.  Meanwhile, by \eqref{Pi-def}, the Fubini--Tonelli theorem, and the invariance of Haar measure, the left-hand side of \eqref{lam} may be written as
$$ \int_{\Stone(X) \rtimes_1 K} (f \otimes g)(\tilde \Pi(\tilde x, k)) \varphi(k)\ d\mu_{\Stone(X) \rtimes_1 K}(\tilde x,k)$$
where $f \otimes g \in \CH(\Stone(Y) \rtimes_\rho K)$ is defined by
$$(f \otimes g)(\tilde y,k) := f(\tilde y) g( k).$$
In particular, the function $(f \otimes g) \circ \tilde \Pi$ lies in $L^2(Y \rtimes_\rho K)$.  Meanwhile, from \eqref{mean-zero} we see that $(\tilde x,k) \mapsto \varphi(k)$ lies in $L^2(K) \ominus L^2(H \backslash K) = L^2(\Inv(X \rtimes_1 K)) \ominus L^2(\Inv(Y \rtimes_\rho K))$.  From Proposition\ref{rel} we conclude that these two functions are orthogonal, giving \eqref{lam} in this case.

Now suppose we are in the invariant case \eqref{left}.  Then from the Fubini--Tonelli theorem and the invariance of Haar measure, the right-hand side of \eqref{lam} simplifies to
$$ \left(\int_{\Stone(Y)} f\right) \left(\int_K g \varphi\right)$$
and the left-hand side similarly simplifies to
$$ \left(\int_{\Stone(X)} f \circ \Stone(\pi_{X \to Y})\right) \left(\int_K g\varphi\right)$$
and the claim follows since $\Stone(\pi_{X \to Y})$ is a $\CHProb$-morphism.
\end{proof}

From \eqref{theta-star}, \eqref{theta-gam-abstract} we have the identity
\begin{equation}\label{thet-id}
\theta_* \circ T_X^\gamma = (\rho_{*,\gamma} \circ \Inc(\pi_{X \to Y})) \theta_*
\end{equation}
in $\Cond_X(K)$, where $\rho_{*,\gamma} \in \Cond_Y(K)$ is the cocycle cohomologous to $\rho_\gamma$ defined by the formula
$$ \rho_{*,\gamma} := (\Phi_* \circ T_Y^\gamma) \rho_\gamma \Phi_*^{-1}.$$
Since $\theta_* \in \Cond_X(H)$, we see from \eqref{thet-id} that
$$ \rho_{*,\gamma} \circ \Inc(\pi_{X \to Y}) \in \Cond_X(H).$$
By Lemma \ref{base-change}, we conclude that
$$ \rho_{*,\gamma} \in \Cond_Y(H).$$
Since $\rho_{*,\gamma}$ can be easily verified to obey the cocycle equation \eqref{cocycle}, we conclude that $\rho_* := (\rho_{*,\gamma})_{\gamma \in \Gamma}$ is an $H$-valued $\ProbAlgG$-cocycle.  
Applying Lemma \ref{integral-comp} and Proposition \ref{skew-crit}, we conclude that $X$ is equivalent as an extension of $Y$ to $Y \rtimes^{\ProbAlgG}_{\rho_*} H$. 
This completes the proof of Theorem \ref{mackey-uncountable}(i).

\section{Fifth step: extending ergodic homogeneous extensions to ergodic group extensions}\label{sixth}

Having established part (i) of Theorem \ref{mackey-uncountable}, we now work on part (ii), where the vertical coordinate  $\theta \colon X \to K/L$ now takes values in a group quotient $K/L$ for some closed subgroup $L$ of $K$.   The main step is encapsulated in the following theorem (cf. \cite[Corollary 3.27]{glasner2015ergodic}), which one can also view as a variant of Theorem \ref{lift}.

\begin{theorem}[Extending homogeneous extensions to group extensions]\label{ext-thm}  Let $\Gamma$ be a group, $Y$ be an ergodic $\ProbAlgG$-system, $K$ be a compact Hausdorff group, $L$ be a closed subgroup of $K$, and $(X, \pi_{X \to Y}, \theta, \rho)$ be an ergodic $\ProbAlgG$-homogeneous extension of $Y$ by $K/L$.
\begin{itemize}
    \item [(i)]  There exists a $\ProbAlgG$-group extension $(X', T_{X'}, \pi_{X' \to Y}, \theta', \rho')$ of $(Y,T_Y)$ by $K$ and a $\ProbAlgG$-factor map $\pi_{X' \to X} \colon (X',T_{X'}) \to (X,T)$ such that the diagram
    \begin{center}
\begin{tikzcd}
    \Inc(X') \arrow[r,"\theta'"] \arrow[d, "\Inc(\pi_{X' \to X})"] \arrow[dd, bend right, "\Inc(\pi_{X' \to Y})"',shift right=0.75ex] & K \arrow[d, "\pi_{K/L}"]\\
    \Inc(X) \arrow[r,"\theta"] \arrow[d,"\Inc(\pi_{X \to Y})"] & K/L\\
    \Inc(Y)
\end{tikzcd}
\end{center}
    commutes in $\AbsMes$, where ${\pi}_{K/L} \colon K \to K/L$ is the canonical projection.
    \item[(ii)]  In part (i), we can take the $\ProbAlgG$-system $(X',T_{X'})$ to be ergodic.
\end{itemize}
\end{theorem}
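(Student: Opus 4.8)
The plan is to prove (i) by an explicit fibre-product construction carried out in the canonical model, and then to obtain (ii) by extracting an extreme point from the convex set of all admissible invariant measures via the Krein--Milman theorem. For (i), I would first pass to the canonical model, representing $\theta$ by a genuine $\CH$-morphism $\tilde\theta \colon \Stone(X) \to K/L$ and the cocycle $\rho$ by $\tilde\rho = (\tilde\rho_\gamma)_{\gamma \in \Gamma}$, so that (using \eqref{rep} and \eqref{theta-gam-abstract}) the identity $\tilde\theta \circ \Stone(T_X^\gamma) = (\tilde\rho_\gamma \circ \Stone(\pi_{X\to Y})) \tilde\theta$ holds pointwise. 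I then take the underlying compact Hausdorff space of $X'$ to be the fibre product
$$ X'_\CH := \{ (x,k) \in \Stone(X) \times^\CH K : \tilde\theta(x) = kL \}, $$
with $\theta'(x,k) := k$, $\pi_{X'\to X}(x,k) := x$, $\pi_{X'\to Y} := \Stone(\pi_{X\to Y}) \circ \pi_{X'\to X}$, vertical cocycle $\rho' := \rho$, and $\Gamma$-action $T_{X'}^\gamma(x,k) := (\Stone(T_X^\gamma)(x),\, \tilde\rho_\gamma(\Stone(\pi_{X\to Y})(x))\, k)$. The coset identity $\tilde\theta(\Stone(T_X^\gamma)x) = \tilde\rho_\gamma(\cdots)\tilde\theta(x)$ shows that $T_{X'}^\gamma$ preserves the fibre-product constraint and defines a $\CH_\Gamma$-action, while the relations $\pi_{K/L}\circ\theta' = \theta\circ\pi_{X'\to X}$, $\pi_{X'\to Y} = \pi_{X\to Y}\circ\pi_{X'\to X}$, and the cocycle equation for $\theta'$ all hold by construction, yielding the required commuting diagram.

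To equip $X'_\CH$ with an invariant probability measure I would invoke the Riesz representation theorem (Theorem \ref{rrt}). Each coset $kL$ carries a canonical Haar probability measure $\nu_{kL}$ on $K$ (the pushforward of $\Haar_L$ under $\ell \mapsto k\ell$, well defined since $L$ is compact), and the assignment $kL \mapsto \nu_{kL}$ is weak-$*$ continuous. Hence for $F \in C(X'_\CH)$, extended by Tietze to $\Stone(X)\times^\CH K$, the fibre average $G_F(x) := \int_K F(x,k)\, d\nu_{\tilde\theta(x)}(k)$ depends only on $F|_{X'_\CH}$ and is continuous on $\Stone(X)$. Then $F \mapsto \int_{\Stone(X)} G_F \, d\mu_{\Stone(X)}$ is a positive normalized linear functional, and Riesz produces a Baire probability measure $\mu_{X'}$. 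A Fubini computation using bi-invariance of Haar measure shows that $\pi_{X'\to X}$ pushes $\mu_{X'}$ to $\mu_{\Stone(X)}$ and that $\mu_{X'}$ is $T_{X'}$-invariant, so $(X'_\CH, \mu_{X'})$ casts to a $\ProbAlgG$-group extension of $Y$ factoring onto $X$. The joint generation of $\Inc(X')_\SigmaAlg$ by $\pi_{X'\to Y}$ and $\theta'$ follows from the Stone--Weierstrass theorem together with the fact that $\pi_{X\to Y}$ and $\theta = \pi_{K/L}\circ\theta'$ already generate $\Stone(X)$; this establishes (i).

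For (ii), I would consider the set $\mathcal{M}$ of all $\Gamma$-invariant Baire probability measures $\nu$ on $X'_\CH$ with $(\pi_{X'\to X})_*\nu = \mu_{\Stone(X)}$. By Riesz, $\mathcal{M}$ is a weak-$*$ compact convex subset of the dual of $C(X'_\CH)$; it is nonempty since $\mu_{X'} \in \mathcal{M}$. Because the pointwise cocycle identity holds on all of $X'_\CH$ irrespective of the measure, and because $\pi_{X'\to Y} = \pi_{X\to Y}\circ\pi_{X'\to X}$ automatically pushes any $\nu \in \mathcal{M}$ to $\mu_Y$, every member of $\mathcal{M}$ makes $(X'_\CH,\nu)$ a $\ProbAlgG$-group extension of $Y$ factoring onto $X$ with the same diagram. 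By the Krein--Milman theorem, $\mathcal{M}$ has an extreme point $\nu_0$, and the claim is that the associated system is ergodic.

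The main obstacle is precisely this ergodicity step, since extreme points of $\mathcal{M}$ are a priori only ergodic \emph{relative} to $X$; the point is that the ergodicity of $X$ upgrades relative ergodicity to absolute ergodicity. Suppose the cast of $(X'_\CH,\nu_0)$ is not ergodic, so there is a $\Gamma$-invariant $E \in \Inc(X')_\SigmaAlg$ with $t := \nu_0(E) \in (0,1)$. Using the measure-preserving equivariant factor map $\pi_{X'\to X}$, form the conditional expectation $\E(1_E \mid X) \in L^2(X)$; equivariance makes it $\Gamma$-invariant, and ergodicity of $X$ forces it to equal the constant $t$. Setting $d\nu_1 := t^{-1}\, 1_E\, d\nu_0$ and $d\nu_2 := (1-t)^{-1}\, 1_{E^c}\, d\nu_0$ produces $\Gamma$-invariant probability measures, and the constancy of $\E(1_E\mid X)$ gives, for every $f \in L^\infty(X)$,
$$ \int_{X'_\CH} (f\circ\pi_{X'\to X})\, d\nu_1 = t^{-1}\int_{X'_\CH} (f\circ\pi_{X'\to X})\, \E(1_E\mid X)\, d\nu_0 = \int_X f\, d\mu_X, $$
so $(\pi_{X'\to X})_*\nu_1 = \mu_{\Stone(X)}$, and likewise for $\nu_2$; hence $\nu_1,\nu_2 \in \mathcal{M}$. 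Since $\nu_0 = t\nu_1 + (1-t)\nu_2$ with $\nu_1 \neq \nu_2$, this contradicts the extremality of $\nu_0$. Therefore no such $E$ exists, $\nu_0$ is ergodic, and the cast of $(X'_\CH,\nu_0)$ is the desired ergodic $\ProbAlgG$-group extension, proving (ii).
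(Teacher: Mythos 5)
Your proposal is correct and follows essentially the same route as the paper: the identical fibre-product model $\{(x,k): \pi_{K/L}(k)=\tilde\theta(x)\}$ in the canonical model with the coset-Haar disintegration and Riesz representation for part (i), and the same Krein--Milman extreme-point argument for part (ii). The only cosmetic difference is that you phrase the key ergodicity step via the constancy of the conditional expectation $\E(1_E\mid X)$, whereas the paper phrases it as the pushforward $(\pi_{X'\to X})_*(1_E\,\nu_0)$ being an invariant measure absolutely continuous with respect to the ergodic measure $\mu_{\Stone(X)}$ and hence a constant multiple of it; these are the same argument.
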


We begin with part (i).
We first apply the functor $\Stone$ and \eqref{rep} to canonically model the $\ProbAlgG$-homogeneous extension $(X, T_X, \pi_{X \to Y}, \theta, \rho)$ of $(Y,T_Y)$ by $K/L$ by the  $\CHProb_\Gamma$-homogeneous extension $(\tilde X, T_{\tilde X}, \pi_{\tilde X \to \tilde Y}, \tilde \theta, \tilde \rho)$ of $(\tilde Y,T_{\tilde Y}) := \Stone(Y,T_Y)$ by $K/L$, where $(\tilde X, T_{\tilde X}) := \Stone(X,T_X)$, $\pi_{\tilde X \to \tilde Y} := \Stone(\pi_{X \to Y})$ and the $\CH$-morphism $\tilde \theta \colon \tilde X \to K/L$ and the $K$-valued $\CHProb_\Gamma$-cocycle $\tilde \rho$ on $\tilde X$ are the canonical concrete models of $\theta, \rho$ respectively.  We then set $\tilde X'$ to be the compact subset of $\tilde X \times^\CH K$ defined by
$$ \tilde X' := \{ (\tilde x, k) \in \tilde X \times^\CH K: \pi_{K/L}(k) = \tilde \theta(\tilde x) \}.$$
This is a $\CH$-space.
We have the $\CH$-morphisms $\pi_{\tilde X' \to \tilde X}, \pi_{\tilde X \to \tilde Y'}$, $\tilde \theta'$ defined by
\begin{align*}
\pi_{\tilde X' \to \tilde X}(\tilde x, k) &:= \tilde x\\
\pi_{\tilde X' \to \tilde Y}(\tilde x, k) &:= \pi_{\tilde X \to \tilde Y}(\tilde x) \\
\tilde \theta'(\tilde x, k) &:= k
\end{align*}
for $(\tilde x, k) \in \tilde X'$.  Then we have the commuting diagram
    \begin{center}
\begin{tikzcd}
    \tilde X' \arrow[r,"\tilde \theta'"] \arrow[d, "\pi"] \arrow[dd, bend right, "\pi"'] & K \arrow[d, "\pi"]\\
    \tilde X \arrow[r,"\tilde \theta"] \arrow[d,"\pi"] & K/L\\
    \tilde Y
\end{tikzcd}
\end{center}
in $\CH$, where we suppress subscripts on the various projections $\pi$ for brevity.

We can add (topological) dynamics to the left column of this diagram by introducing the $\CH$-action $T_{\tilde X'}$ of $\Gamma$ by the formula
$$ T^\gamma_{\tilde X'}(\tilde x, k) := (T^\gamma_{\tilde X}(\tilde x), \tilde \rho_\gamma(\pi_{\tilde X \to \tilde Y}(\tilde x)) k).$$
From \eqref{theta-gam-abstract} (transferred to the concrete model) we see that these maps are well defined as $\CH$-morphisms of $\tilde X'$.  
As $\tilde \rho$ is a $K$-valued $\CHProb$-cocycle, all the morphisms on the left column of this diagram can be promoted to $\CH_\Gamma$ morphisms.

Now we add probability theory to the left column, by defining a measure $\overline{\mu}_{\tilde X'}$ on $\tilde X'$ by the formula
$$ \int_{\tilde X'} F\ d\overline{\mu}_{\tilde X'} := \int_{\tilde X} \left(\int_{\tilde \theta(\tilde x)} F(\tilde x, k)\ d\Haar_{\tilde \theta(\tilde x)}(k)\right)\ d\mu_{\tilde X}(\tilde x)$$
for any $F \in C(\tilde X')$, where $\Haar_{\tilde \theta(\tilde x)}$ is Haar measure on the left coset $\tilde \theta(\tilde x)$ of $L$.  By Theorem \ref{rrt} this uniquely defines a probability measure on $\tilde X'$, and from Fubini's theorem and the invariance property of Haar measure (as well as further application of Theorem \ref{rrt}) we see that this measure is preserved by the action $T_{\tilde X'}$, and pushes down to $\mu_{\tilde X}$ under $\pi_{\tilde X' \to \tilde X}$.  Thus if we equip $\tilde X'$ with the measure $\overline{\mu}_{\tilde X'}$, the left column of the above diagram can now be promoted to $\CHProb_\Gamma$-morphisms.  Casting to $\ProbAlgG$, applying $\Inc$, and then using the canonical $\AbsMes$-inclusions of $\Inc(\tilde X'), \Inc(\tilde X)$ into $\tilde X', \tilde X$ respectively, we obtain part (i) of the theorem.

Now we turn to part (ii).  Consider the collection of all possible probability measures $\mu_{\tilde X'}$ on $\tilde X'$ for which $(\tilde X, \mu_{\tilde X'}, T_{\tilde X'}, \pi_{\tilde X' \to \tilde X})$ is a $\CHProb_\Gamma$-extension of $\tilde X$, or equivalently (by Theorem \ref{rrt}) one has the identities
\begin{equation}\label{mu-1} 
\int_{\tilde X'} f \circ \pi_{\tilde X' \to \tilde X}\ d\mu_{\tilde X'}
= \int_{\tilde X} f
\end{equation}
and
\begin{equation}\label{mu-2}
\int_{\tilde X'} F \circ T^\gamma_{\tilde X'}\ d\mu_{\tilde X'} =
\int_{\tilde X'} F \ d \mu_{\tilde X'}
\end{equation}
for all $f \in C(\tilde X)$, $F \in C(\tilde X')$. By Theorem \ref{rrt} and Tychonoff's theorem, we may identify this collection with a closed convex subset of $\C^{C(\tilde X')}$; by the preceding construction, this subset is non-empty.  Thus by the Krein--Milman theorem, we may find a probability measure $\mu_{\tilde X'}$ in this collection which is an extreme point of the convex set.  If we can show that the system $(\tilde X', \mu_{\tilde X'}, T_{\tilde X'})$ is ergodic, then by repeating the argments used to conclude (i) we obtain (ii).

It remains to establish ergodicity.  If for contradiction one does not have ergodicity, there must exist $0<p<1$ and a set $E \in \tilde X'_{\SigmaAlg}$ of measure $\mu_{\tilde X'}(E) = p$ such that $E$ and $T^\gamma_{\tilde X'} E$ agree up to $\mu_{\tilde X'}$-null sets for all $\gamma \in \Gamma$.  The measure $1_E \mu_{\tilde X'}$ is then $T_{\tilde X'}$-invariant and of total mass $p$, so the pushforward
$(\pi_{\tilde X' \to \tilde X})_* (1_E \mu_{\tilde X'})$ is a  $T_{\tilde X}$-invariant measure of total mass $p$ that is absolutely continuous with respect to $\mu_{\tilde X}$.  Hence we must have
$$ (\pi_{\tilde X' \to \tilde X})_* (1_E \mu_{\tilde X'}) = p \mu_{\tilde X}.$$
If we then split $\mu_{\tilde X'} = p \mu^1_{\tilde X} + (1-p) \mu^2_{\tilde X}$, where
$$ \mu^1_{\tilde X} := \frac{1}{p} 1_E \mu_{\tilde X}; \quad \mu^2_{\tilde X} := \frac{1}{1-p} 1_{\tilde X \backslash E} \mu_{\tilde X}$$
one can verify that $(\tilde X, \mu^i_{\tilde X'}, T_{\tilde X'}, \pi_{\tilde X' \to \tilde X})$ are $\CHProb_\Gamma$-extensions of $\tilde X$ for $i=1,2$ with $\mu^i_{\tilde X'} \neq \mu_{\tilde X'}$, contradicting the extremality of $\mu_{\tilde X'}$.  Hence no such $E$ exists, and the proof of Theorem \ref{ext-thm} is complete.

\section{Sixth step: quotienting out the group skew-product}

We are now ready to conclude the proof of Theorem \ref{mackey-uncountable}(ii). Let $\Gamma$ be a group, $Y$ be an ergodic $\ProbAlgG$-system, $K$ be a compact group, $L$ be a closed subgroup of $K$, and $(X, \pi_{X \to Y}, \theta, \rho)$ be an ergodic $\ProbAlgG$-homogeneous extension of $Y$ by $K/L$.  Our task is to show that the extension $(X, \pi_{X \to Y})$ is $\ProbAlgG$-equivalent to a $\ProbAlgG$-homogeneous skew-product $Y \rtimes_\rho H/M$ for some $H$ a compact subgroup of $K$ and $M$ a compact subgroup of $H$.   

By Theorem \ref{ext-thm}, we can find an ergodic $\ProbAlgG$-group extension $(X', \pi_{X' \to Y}, \theta', \rho')$ of $Y$ by $K$ and an a $\ProbAlgG$-factor map $\pi_{X' \to X} \colon X' \to X$ such that the diagram in that theorem commutes.  From Theorem \ref{mackey-uncountable}(i), the extension $(X', \pi_{X' \to Y})$ is equivalent to a $\ProbAlgG$-group extension $Y \rtimes_{\rho_*} H$ for some compact subgroup $H$ of $K$ and some $H$-valued cocycle $\rho_*$.  Thus we may assume without loss of generality that $X' = Y \rtimes_{\rho_*} H$.  The vertical coordinate $\theta'$ need not agree with the standard vertical coordinate $\theta_{Y \rtimes_{\rho_*} H}$, but an inspection of the proof of Theorem \ref{mackey-uncountable}(i) reveals that the two coordinates are related by the equation
$$ (\Phi \circ \Inc(\pi_Y)) \theta' = \theta_{Y \rtimes_{\rho_*} H}$$
in $\Cond_{Y \times_{\rho_*} H}(K)$ for some $\Phi \in \Cond_Y(K)$, where $\pi_Y \colon Y \rtimes_{\rho_*} H \to Y$ is the canonical $\ProbAlg$-morphism. Applying the projection $\pi_{K/L} \colon K \to K/L$, we conclude in particular that
$$ (\Phi \circ \Inc(\pi_Y)) (\theta \circ \Inc(\pi_{X' \to X})) = \pi_{K/L}( \theta_{Y \times_{\rho_*} H} )$$
using the action of $\Cond_{Y \times_{\rho_*} H}(K)$ on $\Cond_{Y \times_{\rho_*} H}(K/L)$.  Equivalently, one has
\begin{equation}\label{geo}
\theta_* \circ \Inc(\pi_{X' \to X}) = \pi( \theta_{Y \times_{\rho_*} H}) 
\end{equation}
where $\theta_* \in \Cond_X(K/L)$ is defined by
$$\theta_* := (\Phi \circ \Inc(\pi_{X \to Y})) \theta.$$
Write $M := L \cap H$, then there is a natural monomorphism of $H/M$ into $K/L$.  The right-hand side of \eqref{geo} lies in $\Cond_{Y \times_{\rho_*} H}(H/M)$, hence (by Lemma \ref{base-change}) we have
$$\theta_* \in \Cond_X(H/M).$$
Thus we now have the commutative diagram
\begin{center}
\begin{tikzcd}
    \Inc(Y \rtimes_{\rho_*} H) \arrow[r,"\theta_{Y \rtimes_{\rho_*} H}"] \arrow[d, "\Inc(\pi_{X' \to X})"] \arrow[dd, bend right, shift right=1.5ex, "\Inc(\pi_{X' \to Y})"'] & H \arrow[d, "\pi_{H/M}"]\\
    \Inc(X) \arrow[r,"\theta_*"] \arrow[d,"\Inc(\pi_{X \to Y})"] & H/M\\
    \Inc(Y)
\end{tikzcd}
\end{center}
in $\AbsMes$, where $\pi_{H/M}$ denotes the projection from $H$ to $H/M$.  Note that the vertical morphisms are in fact $\AbsProb$-morphisms, as is the morphism from $\Inc(Y \rtimes_{\rho_*} H)$ to $H$, hence $\theta_*$ can also be promoted to an $\AbsProb$-morphism.  We may now cast the above diagram to $\ProbAlg$ to obtain

\begin{center}
\begin{tikzcd}
    Y \rtimes_{\rho_*} H \arrow[r,"\theta_{Y \rtimes_{\rho_*} H}"] \arrow[d, "\pi_{X' \to X}"] \arrow[dd, bend right, shift right=1.5ex, "\pi_{X' \to Y}"'] & H \arrow[d, "\pi_{H/M}"]\\
    X \arrow[r,"\theta_*"] \arrow[d,"\pi_{X \to Y}"] & H/M\\
    Y
\end{tikzcd}.
\end{center}

Now we verify the hypotheses of Proposition \ref{skew-crit}.
Suppose that $f \in L^\infty(Y)$ and $g \in C(H/M)$, and consider the abstract integral
$$ \int_X (\pi_{X \to Y}^* f) (g \circ \theta_*)$$
where the $\AbsMes$-morphism $g \circ\theta_* \colon \Inc(X) \to \C$ may be viewed as an element of $L^\infty(X)$.  Pulling back to $Y \rtimes_{\rho_*} K$ using the above commutative diagram, this becomes
$$ \int_{Y \rtimes_{\rho_*} H} (\pi_{X' \to Y}^* f) (g \circ \pi_{H/M} \circ \theta_{Y \rtimes_{\rho_*} H}).$$
From Definition \ref{abstract-ext}, we may write this as
$$ \int_{\Stone(Y) \rtimes_{\tilde \rho_*} H} f(\tilde y) g(hM) d\mu_{\Stone(Y)}(\tilde y) d\Haar_H(h)$$
which by Fubini's theorem simplifies to
$$ \left( \int_{\Stone(Y)} f\right) \left( \int_{H/M} g \right).$$
 Applying Proposition \ref{skew-crit}, we conclude that $X$ is equivalent as an extension of $Y$ to $Y \rtimes^{\ProbAlgG}_{\rho_*} H/M$, concluding the proof of Theorem \ref{mackey-uncountable}(ii). 

\section{Recovering the countable Mackey--Zimmer theorem from the uncountable theorem}\label{implication}

We now use Theorem \ref{mackey-uncountable} to establish Theorem \ref{mackey-countable}.  We will just establish part (ii) of this theorem, as part (i) is proven similarly.

Let the hypotheses be as in Theorem \ref{mackey-countable}(ii). Thus we have an at most countable group $\Gamma$, a standard Lebesgue $\ConcProb_\Gamma$-system $(\tilde Y, T_{\tilde Y})$, a compact metrizable group $K$, a compact subgroup $L$ of $K$, and an ergodic $\ConcProb_\Gamma$-homogeneous extension $({\tilde X}, \mu_{\tilde X}, T_{\tilde X}, \pi_{\tilde X \to \tilde Y}, \tilde \theta, \tilde \rho)$ of $(\tilde Y,T_{\tilde Y})$ by $K/L$.  Casting to $\ProbAlgG$, we obtain an ergodic $\ProbAlgG$-homogeneous extension  $(X, \mu_{X}, T_{X}, \pi_{X \to Y}, \theta, \rho)$ of the $\ProbAlgG$-system $(Y,T_Y) = (\tilde Y, T_{\tilde Y})_{\ProbAlgG}$.  Applying Theorem \ref{mackey-uncountable}(ii), we see that $(X, \mu_{X}, T_{X}, \pi_{X \to Y}, \theta, \rho)$ is equivalent in $\ProbAlgG$ to a $\ProbAlgG$-homogeneous skew-product $Y \rtimes^{\ProbAlgG}_{\rho_*} H/M$ for some compact subgroup $H$ of $K$, some compact subgroup $M$ of $H$, and some $H$-valued $\ProbAlgG$-cocycle $\rho_*$ on $Y$.  To finish the task, it suffices to locate an $H$-valued $\ConcProb_\Gamma$-cocycle $\tilde \rho_*$ on $\tilde Y$ such that $Y \rtimes^{\ProbAlgG}_{\rho_*} H/M$ is equivalent in $\ProbAlgG$ to $\tilde Y \rtimes^{\ConcProb_\Gamma}_{\tilde \rho_*} H/M$.

Let $\theta_* \in \Cond_X(H/M)$ be the vertical coordinate of  $Y \rtimes^{\ProbAlgG}_{\rho_*} H/M$ as viewed in the equivalent extension $(X, \mu_{X}, T_{X}, \pi_{X \to Y}, \theta, \rho)$.  From an inspection of the proof of Theorem \ref{mackey-uncountable}(ii), the cocycle $\rho_*$ is related to the cocycle $\rho$ by the formula
$$ \rho_{*,\gamma} = (\Phi_* \circ T_Y^\gamma) \rho_\gamma \Phi_*^{-1}$$
for some $\Phi \in \Cond_Y(H)$.  Since $H$ is compact metrizable by hypothesis (c), it is a Polish space, hence by \cite[Proposition 3.2]{jt19} we can find a $\ConcMes$-morphism $\tilde \Phi \colon \tilde Y \to H$ that models $\Phi \colon \Inc(Y) \to H$ in the sense that $\Phi =_\AbsMes \tilde \Phi \circ \iota$ where $\iota \colon \Inc(Y) \to \tilde Y$ is the canonical $\AbsProb$-inclusion.  If we then define
$$ \tilde \rho_{*,\gamma} = (\tilde \Phi_* \circ T_{\tilde Y}^\gamma) \tilde \rho_\gamma \tilde \Phi_*^{-1}$$
then we see that $\tilde \rho_* = (\tilde \rho_{*,\gamma})_{\gamma \in \Gamma}$ is an $H$-valued $\ConcProb_\Gamma$-valued cocycle on $\tilde Y$, which agrees with $\rho_*$ in the sense that
$$ 
\rho_{*,\gamma} =_\AbsMes \tilde \rho_{*,\gamma} \circ \iota.
$$

It remains to show that $Y \rtimes^{\ProbAlgG}_{\rho_*} H/M$ is equivalent in $\ProbAlgG$ to $\tilde Y \rtimes^{\ConcProb_\Gamma}_{\tilde \rho_*} H/M$.  By Proposition \ref{skew-crit}, it suffices to show that
\begin{align*}
 &\int_{\tilde Y \times^\ConcProb H/M} (f \circ \pi_{\tilde Y \times^\ConcProb H/M \to \tilde Y}) (g \circ \pi_{\tilde Y \times^\ConcProb H/M \to H/M}) \\
&= \left(\int_Y f\right) \left(\int_{H/M} g\right)
\end{align*}
for any $f \in L^\infty(\tilde Y) \equiv L^\infty(Y)$ and $g \in C(H/M)$; but this is immediate from Fubini's theorem.

\begin{remark} This argument shows that the hypotheses (a), (b) in Theorem \ref{mackey-countable} can in fact be deleted.  We do not know if the same is true for hypothesis (c).
\end{remark}

\section{A cocycle-free description of group extensions}\label{cocycle-free}

Let $\Gamma$ be a group, let $Y = (Y_\ProbAlg, T_Y)$ be a $\ProbAlgG$-system, let $K$ be a compact Hausdorff group, and let $\rho$ be a $K$-valued $\ProbAlgG$-cocycle on $Y$.  Then, as discussed in Section \ref{product-sec}, the skew-product 
$$ X := Y \rtimes_\rho K = Y \rtimes_\rho^{\ProbAlgG} K$$
can be promoted to a $\ProbAlgGK$-system.   From Lemma \ref{trans} it is not difficult to see that a function $f \in L^\infty(X)$ is $K^\op$-invariant if and only if it arises from a function in $L^\infty(Y)$; thus the invariant factor $\mathtt{Inv}_{K^\op}(X)$ of $X$ is $\ProbAlgG$-isomorphic to $Y$.  Furthermore, the action of $K^\op$, viewed as a homomorphism from $K^\op$ to the unitary group $U(L^2(X))$ of $L^2(X)$, can easily be seen to be continuous (giving the unitary group the strong operator topology) and faithful (i.e., injective), as can be easily seen by passing to the topological model $\Stone(Y) \rtimes_{\tilde \rho}^{\CHProb_\Gamma} K$ and using Urysohn's lemma.  

In this section we establish a converse to this observation (for ergodic systems), which was suggested to us by the anonymous referee, and which can be viewed as an abstract uncountable version of \cite[Theorem 3.29]{glasner2015ergodic}:

\begin{theorem}[Cocycle-free description of a group extension]\label{cocycle-free-thm}  Let $\Gamma$ be a group, $K$ be a compact Hausdorff group, and let $X$ be a $\ProbAlgGK$-system which is ergodic with respect to the $\Gamma$ action (thus $\Inv(X)$ is trivial).  Suppose that the action of $K^\op$ (viewed as a homomorphism from $K^\op$ to $U(L^2(X))$) is continuous and faithful, and let $Y := \mathtt{Inv}_{K^\op}(X)$ be the $K^\op$-invariant factor.  Then $X$ is $(\ProbAlgGK \downarrow Y)$-isomorphic to $Y \rtimes_\rho K$ for some $K$-valued $\ProbAlgG$-cocycle on $Y$.
\end{theorem}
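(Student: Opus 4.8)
The plan is to realize $X$ as a principal $K$-bundle over $Y$ by constructing a $K^\op$-equivariant vertical coordinate $\theta \in \Cond_X(K)$ which, together with the factor map $\pi := \pi_{X \to Y} \colon X \to Y$, generates $\Inc(X)_\SigmaAlg$; once $\theta$ is in hand, the theorem follows quickly from the skew-product criterion in Proposition \ref{skew-crit} and Definition \ref{abstract-ext}. The entire difficulty is concentrated in producing such a $\theta$, i.e.\ in showing that the continuous faithful $K^\op$-action "freely coordinatizes" $X$ over $Y$.

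The first step is to analyze $L^2(X)$ as a unitary $K$-representation via $k_0 \mapsto (T_X^{k_0^\op})^*$. By the continuity hypothesis this representation is strongly continuous, so the Peter--Weyl theorem decomposes it into $K^\op$-isotypic components $L^2(X) = \bigoplus_{\lambda \in \hat K} L^2(X)_\lambda$, with the trivial component equal to $L^2(Y)$ (this is precisely the defining property of $Y = \mathtt{Inv}_{K^\op}(X)$, in the spirit of Lemma \ref{trans}). Multiplication by $L^\infty(Y)$ commutes with the $K^\op$-action and makes each $L^2(X)_\lambda$ a module over $L^\infty(Y)$; writing $L^2(X)_\lambda \cong N_\lambda \otimes V_\lambda$, the multiplicity module $N_\lambda$ inherits a $\Gamma$-action (as $\Gamma$ commutes with $K^\op$). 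The crux of the whole argument is the \emph{freeness} statement that $L^2(X) \cong L^2(Y) \mathbin{\hat\otimes} L^2(K)$ as $L^\infty(Y)$-modules-with-$K^\op$-action; equivalently, that each $N_\lambda$ is free over $L^\infty(Y)$ of rank exactly $d_\lambda := \dim V_\lambda$, equivalently that the dense $*$-subalgebra $\mathcal A \subset L^\infty(X)$ of $K$-finite functions is $L^\infty(Y)$-algebra isomorphic to $L^\infty(Y) \otimes \mathcal R(K)$, where $\mathcal R(K) = \bigoplus_\lambda \operatorname{End}(V_\lambda)$ is the algebra of representative functions. Here ergodicity of the $\Gamma$-action forces the rank of $N_\lambda$ (an a priori $\Gamma$-invariant, hence constant, element of $L^\infty(Y)$) to be a constant $m_\lambda$, while faithfulness of the $K^\op$-action --- through the fact that the appearing irreducibles form a faithful family and the algebra structure of $\mathcal A$ closes this family under tensoring, so that Peter--Weyl and Stone--Weierstra{\ss} apply --- is what should upgrade this to full occurrence $m_\lambda = d_\lambda$ for every $\lambda \in \hat K$.

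Granting freeness, the tautological generators of $\mathcal R(K)$ transport to matrix-valued functions $\Theta^\lambda = (\Theta^\lambda_{ij}) \in M_{d_\lambda}(L^\infty(X))$ that are unitary-valued, $K^\op$-equivariant (so that $(T_X^{k_0^\op})^* \Theta^\lambda = \Theta^\lambda\, \lambda(k_0)$), and compatible under the tensor operations on $\hat K$. By the strong Lusin property (Theorem \ref{canon}(iii)) each $\Theta^\lambda_{ij}$ is represented by a genuine continuous function on $\Stone(X)$, so the $\Theta^\lambda$ assemble into a $\CH$-morphism $\tilde\theta \colon \Stone(X) \to \prod_\lambda U(V_\lambda)$ whose compatibility relations place its image in the Peter--Weyl copy of $K$; this $\tilde\theta$ is exactly a $K^\op$-equivariant $\theta \in \Cond_X(K) \equiv \CondTilde_X(K)$, and density of $\mathcal A$ in $L^2(X)$ shows that $\pi$ and $\theta$ jointly generate $\Inc(X)_\SigmaAlg$. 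Any selection needed to trivialize the bundle over $Y$ (e.g.\ choosing an $L^\infty(Y)$-frame of $N_\lambda$) would be performed not via a classical measurable cross-section but through the projectivity of $\Stone(Y)$ supplied by Gleason's theorem, exactly as in Lemma \ref{surj} and Theorem \ref{lift}.

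Finally I would extract the cocycle and conclude. Since $\Gamma$ commutes with $K^\op$, the quantity $(\theta \circ \Inc(T_X^\gamma))\,\theta^{-1}$ is $K^\op$-invariant, hence descends --- via Lemma \ref{trans}, exactly as in the proof of Proposition \ref{perfect} --- to a conditional element $\rho_\gamma \in \Cond_Y(K)$; the family $\rho = (\rho_\gamma)_{\gamma \in \Gamma}$ satisfies the cocycle equation \eqref{cocycle} and exhibits $(X, \pi, \theta, \rho)$ as a $\ProbAlgG$-group extension of $Y$. Because $(\pi,\theta)$ is $K^\op$-equivariant and $\mu_X$ is $K^\op$-invariant, decomposing $g \in C(K)$ into matrix coefficients and applying orthogonality of distinct isotypic components (Proposition \ref{rel}) shows that only the constant part $\int_K g$ survives, verifying the integral hypothesis \eqref{intx} of Proposition \ref{skew-crit}; this yields a $(\ProbAlgG \downarrow Y)$-isomorphism $X \cong Y \rtimes_\rho K$, which the equivariance of $\theta$ promotes to the desired $(\ProbAlgGK \downarrow Y)$-isomorphism. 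I expect the main obstacle to be the freeness input of the second paragraph --- proving point-freely that ergodicity and faithfulness together force full multiplicity $m_\lambda = d_\lambda$ --- and handling it without cross-sections in the uncountable setting by routing all lifting through the canonical model and Gleason projectivity. (As an alternative endgame, once any generating equivariant $\theta$ has been built one may instead invoke Theorem \ref{mackey-uncountable}(i) to write $X \cong Y \rtimes_{\rho_*} H$ for a closed $H \le K$ and then use faithfulness to force $H = K$; but this still rests on the same freeness.)
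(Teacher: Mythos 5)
Your overall architecture (build a $K^\op$-equivariant generating vertical coordinate $\theta\in\Cond_X(K)$, extract the cocycle by $K^\op$-invariance and Lemma \ref{trans}, finish with Proposition \ref{skew-crit}) matches the paper's, and your route through the Peter--Weyl isotypic decomposition of $L^2(X)$ is a legitimate alternative in principle. But the proof has a genuine gap exactly where you locate the "crux": the freeness statement $m_\lambda=d_\lambda$. The mechanism you propose for it --- faithfulness plus the algebra structure of $\mathcal A$ closing the appearing irreducibles under tensoring, then Peter--Weyl/Stone--Weierstra{\ss} --- does not work. The family of irreducibles occurring in a $*$-subalgebra of $C(K)$-type objects is \emph{not} closed under passing to subrepresentations of tensor products: for a single fiber $H\backslash K$ the products of matrix coefficients of $\lambda$ and $\mu$ only reach those constituents $\nu$ of $\lambda\otimes\mu$ with $V_\nu^H\neq 0$. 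Concretely, take $K=S_3$ acting on $\langle(12)\rangle\backslash S_3$: the action is faithful, the occurring irreducibles (trivial and standard) separate points of $K$, and $C(\langle(12)\rangle\backslash S_3)$ is a unital $*$-algebra, yet the sign representation never occurs and $m_{\mathrm{std}}=1<2=d_{\mathrm{std}}$. So "faithful $+$ tensor-closed $+$ Stone--Weierstra{\ss}" cannot by itself upgrade $m_\lambda>0$ to $m_\lambda=d_\lambda$, and in fact cannot even deliver $m_\lambda>0$ for all $\lambda$. (This example of course violates $\Gamma$-ergodicity --- the centralizer of the right $S_3$-action on that coset space is trivial --- which is precisely the point: $\Gamma$-ergodicity must do far more work than "making the rank of $N_\lambda$ constant".) A secondary gap of the same flavour: you never establish the upper bound $m_\lambda\le d_\lambda$, i.e.\ fiberwise transitivity of the $K^\op$-action, which is itself a nontrivial relative version of "ergodic compact group actions are homogeneous" and does not follow formally from $Y=\mathtt{Inv}_{K^\op}(X)$.

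The paper's proof supplies exactly the two missing inputs, and it is worth seeing how. Working in the Koopman model $\hat X$ (the Gelfand dual of the $K^\op$-continuous functions), it uses Gleason projectivity of $\Stone(Y)$ to choose a continuous section $s$ of $\hat\pi$ and then proves by an averaging/Urysohn argument that $K^\op$ acts \emph{transitively} on each fiber (this is the substitute for $m_\lambda\le d_\lambda$). It then studies the stabilizers $K_x$, which obey the covariance $K_{T^{(\gamma,k_0^\op)}x}=k_0^{-1}K_xk_0$; $\Gamma$-ergodicity (applied on the metrizable quotients $K/N$ supplied by Peter--Weyl, so that the stabilizer map lands in a Polish space) forces $K_xN$ to be a.e.\ equal to a fixed subgroup $K_N$, and the covariance then forces $K_N$ to be \emph{normal}. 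Only at that point does faithfulness enter, to give $\bigcap_N K_N=\{1\}$; the coordinate $\tilde\theta$ and cocycle are then assembled by an inverse limit over the quotients $K/K_N$. In your language: ergodicity is what shows the fiberwise stabilizer is an essentially constant \emph{normal} subgroup, and faithfulness merely kills a normal subgroup acting trivially. If you want to salvage the isotypic approach you would need to reproduce this normality argument (and the fiber transitivity) inside your module framework; as written, the central implication of your second paragraph is false.
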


\begin{proof}  In addition to the canonical model $\Stone(X)$ of $X$, it will also be convenient to work with a slightly smaller topological model for $X$ which we called the "Koopman model" in \cite[Appendix A.4]{jt21} (and which we learned from \cite[\S 19.3.1]{host2018nilpotent}).  Let ${\mathcal A}$ denote the algebra of functions $f \in L^\infty(X)$ which are \emph{$K^\op$-continuous} in the sense that the function $k^\op \mapsto k^\op f$ is continuous from $K^\op$ to $L^\infty(X)$.  This is easily seen to be a $\Gamma \times K^\op$-invariant unital $C^*$-algebra whose unit ball is dense (in the $L^2(X)$ topology) in $L^\infty(X)$, equipped with an invariant trace $f \mapsto \int_X f$, and hence its Gelfand dual\footnote{More specifically, we are using here the duality of categories between tracial unital $C^*$-algebras and $\CHProb$; see \cite[Theorem 5.11]{jt-foundational}.}  $\hat X$ is a $\CHProb_{\Gamma \times K^\op}$-system that is $\ProbAlgGK$-isomorphic to $X$, with a canonical identification ${\mathcal A} \equiv C(\hat X)$.  Since $L^\infty(Y)$ includes into ${\mathcal A}$, we have a $\CHProb_{\Gamma \times K^{op}}$-factor map  $\hat \pi \colon \hat X \to \Stone(Y)$, which is surjective since all non-trivial open sets in $\Stone(Y)$ have positive measure.  Because $K^\op$ acts continuously on ${\mathcal A}$, it also acts continuously on $\hat X$ (this is the main advantage in working with $\hat X$ in place of $\Stone(X)$).

From Lemma \ref{surj}(ii) and a theorem of Gleason \cite[Theorem 2.5]{gleason}, $\Stone(Y)$ is projective in $\CH$, and so we can find a $\CH$-section $s \colon \Stone(Y) \to \hat X$ of $\hat \pi$, thus $s$ is continuous and $\hat \pi \circ s(y) = y$ for all $y \in \Stone(Y)$.  In particular, for $y \in \Stone(Y)$, the orbit $K^\op s(y) = \{ k^\op s(y): k^\op \in K^\op \}$ is a compact subset of $\hat \pi^{-1}(y)$.  We claim that it is in fact all of $\hat \pi^{-1}(y)$ (thus the action of $K^\op$ is transitive on the fibers of $\hat \pi$).  For this we use an argument from \cite[\S 19.3.3, Lemma 10]{host2018nilpotent}.  If there was a point $x_0 \in \hat \pi^{-1}(y)$ outside of the $K^\op$-orbit of $s(y)$, then by Urysohn's lemma one can find a continuous non-negative function $f \in C(\hat X)$ which is positive at $s(y)$ and supported on some open set $U$ whose $K^\op$-orbit avoids $x_0$.  The averaged function 
$$ \overline{f} := \int_{K^\op} k^\op f\ d\Haar_{K^\op}(k^\op)$$
is then a $K^\op$-invariant function in $C(\hat X) \equiv {\mathcal A}$ which is positive at $s(y)$ but vanishes at $x_0$. But the only $K^\op$-invariant functions in ${\mathcal A}$ arise from $L^\infty(Y)$, hence $\overline{f}$ factors through $\hat \pi$ and thus attains the same value at both $s(y)$ and $x_0$, a contradiction.

If we knew that the action of $K^\op$ on $\hat X$ was free, one could now build a coordinate function $\tilde \theta \colon \hat X \to K$ by requiring $\tilde \theta(x)$ to be the unique element of $K$ such that
$$ x = \tilde \theta(x) s(\hat\pi(x))$$
for each $x \in \hat X$, and also build a cocycle $\tilde \rho_\gamma \colon \Stone(Y) \to K$ by requiring $\tilde \rho_\gamma(y)$ to be the unique element of $K$ such that
$$ T_{\hat X}^\gamma s(y) = \tilde \rho_\gamma(y) s(T_{\Stone(Y)}^\gamma y)$$
for each $y \in \Stone(Y)$ and $\gamma \in \Gamma$.  Unfortunately we are not quite able to establish the freeness of this action when $K$ is not metrizable, and must allow for the fact that for any given $x \in \hat X$, the stabilizer group
$$ K_x := \{ k \in K: k^\op x = x\}$$
could be non-trivial.  However, $K_x$ is always a closed subgroup of $K$, and it also obeys the invariance
\begin{equation}\label{K-inv}
K_{T_{\hat X}^{(\gamma,k_0^{\op})} x} = k_0^{-1} K_x k_0
\end{equation}
for all $\gamma \in \Gamma$, $k_0 \in K$, $x \in \hat X$.  In particular, the map $x \mapsto K_x$ is $\Gamma$-invariant.

The next step (as in \cite[Lemma 3.28]{glasner2015ergodic}) is to exploit the ergodicity of $X$, but first we need to work with metrizable quotients of $K$ in order to ensure that certain target spaces are Polish.  Let ${\mathcal N}$ denote the collection of all compact normal subgroups $N$ of $K$ such that $K/N$ is metrizable; as we shall discuss later, the Peter--Weyl theorem provides a plentiful supply of such subgroups.  If $N \in {\mathcal N}$, we let ${\mathcal G}_N$ denote the space of all closed subgroups of $K/N$; equipped with the Hausdorff metric, this is a Polish space.  Arguing exactly as in \cite[Lemma 3.28]{glasner2015ergodic}, we see that the map $x \mapsto K_x N/N$ is an upper semi-continuous map from $\hat X$ to the Polish space ${\mathcal G}_N$ and is thus Borel measurable.  At this point we encounter a minor technical difficulty in that the $\CHProb$-space $\hat X$ is equipped with the Baire $\sigma$-algebra rather than the Borel $\sigma$-algebra.  However, by \cite[Corollary 5.5]{jt-foundational}, the Baire-Radon measure $\mu_{\hat X}$ on the Baire $\sigma$-algebra can be uniquely extended to a Radon measure (which by abuse of notation we also call $\mu_{\hat X}$) on the Borel $\sigma$-algebra.  From the Radon property, $C(\hat X)$ is dense in $L^2(\hat X)$ in both $\sigma$-algebras, and hence every Borel set in $\hat X$ is equivalent up to $\mu_{\hat X}$-null sets to a Baire set.  In particular, $\hat X$ remains $\Gamma$-ergodic even with the Borel $\sigma$-algebra (as the Baire and Borel $\sigma$-algbras generate the same $\ProbAlgGK$-system).  The map $x \mapsto K_x N/N$ is a Borel measurable $\Gamma$-invariant map into a Polish space, and is thus constant almost everywhere by ergodicity (see \cite[Theorem 3.10.3]{glasner2015ergodic}).  Thus, for each $N$ there is a unique closed subgroup $K_N/N$ of $K/N$ such that $K_x N/N = K_N/N$ for $\mu_{\hat X}$-almost all $x \in \hat X$.  Comparing this with \eqref{K-inv} we see that $K_N/N$ must be normal in $K/N$, thus $K_N$ is a closed normal subgroup of $K$ containing $N$ and $K_x N = K_N$ for $\mu_{\hat X}$-almost all $x \in \hat X$.  In particular $K_N$ is monotone increasing in $N$: $K_N \leq K_{N'}$ whenever $N \leq N'$ lie in ${\mathcal N}$.

As a substitute for the action of $K$ being free, we claim that\footnote{This morally implies that $K_x$ is trivial for almost all $x$, but we cannot quite conclude this because ${\mathcal N}$ can be uncountably infinite, and the uncountable union of null sets need not be null.} $\bigcap_{N \in {\mathcal N}} K_N = \{1\}$.  Indeed, suppose for contradiction that there was a non-identity element $k$ of $K$ with $k \in K_N$ for all $N \in {\mathcal N}$.  Then for any $f \in C(\hat X)$ that is $N$-invariant for some $N \in {\mathcal N}$, we have $k^\op f = f$ $\mu_X$-almost everywhere, hence everywhere by continuity.  By the Peter--Weyl theorem (see e.g., \cite[Theorem 1.4.14]{tao-hilbert}), every neighborhood $U$ of the identity in $K$ contains an $N$ in ${\mathcal N}$.  By approximating an arbitrary element $f$ of $C(\hat X)$ by averages $\int_N k^{\op} f\ d\Haar_N(k)$ and using uniform continuity, we thus see that the space of functions in $C(\hat X)$ that are $N$-invariant for some $N \in {\mathcal N}$ is dense in $C(\hat X)$.  Thus $k^\op$ acts trivially on all of $C(\hat X)$, and hence on $L^\infty(X)$ by density, contradicting the assumption that the action of $K$ is faithful.  This establishes the claim $\bigcap_{N \in {\mathcal N}} K_N = \{1\}$.

For each $N \in {\mathcal N}$, the set $\{x: K_x N = K_N \}$ has full measure in $\hat X$, and is also open by (semi-)continuity (with $K_x N \geq K_N$ for all $x$); by \eqref{K-inv} we see that this set is also $\Gamma \times K^\op$-invariant, and thus arises from an open full measure $\Gamma$-invariant subset $\Stone(Y)_N$ of $\Stone(Y)$.  For any $x \in \hat \pi^{-1}(\Stone(Y)_N)$, we have $K_x N = K_N$ and the action of $K^\op$ on $\hat \pi^{-1}(\hat \pi(x))$ is transitive, there is thus a unique $\hat \theta_N(x) \in K/K_N$ such that
\begin{equation}\label{x1}
 K_N^\op x = K_N^\op \hat \theta_N(x) s(\hat \pi(x)),
 \end{equation}
and similarly for any $y \in \Stone(Y)_N$ and $\gamma \in \Gamma$, there is a unique $\hat \rho_{\gamma,N}(y) \in K/K_N$ such that
\begin{equation}\label{x2}
K_N^\op T^\gamma_{\hat X} s(y) = K_N^\op \hat \rho_{\gamma,N}(y) s(T^\gamma_{\Stone(Y)} y).
\end{equation}
It is a routine matter to verify that $\hat \theta_N$ is continuous on $\hat \pi^{-1}(\Stone(Y)_N)$ and $\hat \rho_{\gamma,N}$ is continuous on $\Stone(Y)_N$; in particular they are measurable up to almost everywhere equivalence.  From construction it is a routine matter to verify the identities
$$ \hat \theta_N(k^\op x) = \hat \theta_N(x) k$$
for all $x \in \hat \pi^{-1}(\Stone(Y)_N)$ and $k \in K$; by evaluating $T^{\gamma \gamma'}_{\hat X} s(y)$ in two different ways one also arrives at the identity
$$ \hat \rho_{\gamma \gamma',N}(y) = \hat \rho_\gamma(T_{\Stone(Y)}^{\gamma'} y) \hat \rho_{\gamma'}(y)$$
for all $y \in \Stone(Y)_N$ and $\gamma,\gamma' \in \Gamma$.  From applying $T^\gamma_{\hat X}$ to \eqref{x1} and using \eqref{x1}, \eqref{x2} one also has
$$ \hat \theta_N( T^\gamma_{\hat X} x ) = \hat \rho_{\gamma,N}(\hat \pi(x)) \hat \theta_N(x)$$
for all $x \in \hat \pi^{-1}(\Stone(Y)_N)$ and $\gamma \in \Gamma$.  Casting to $\ProbAlg$, we obtain $\ProbAlg$-morphisms $\theta_N \colon X \to K/K_N$ and $\rho_{\gamma,N} \colon Y \to K/K_N$ such that
$$ k^\op \theta_N = \theta_N k^{-1}$$
for all $k \in K$ and
$$ \rho_{\gamma \gamma',N} = (\rho_{\gamma,N} \circ \Inc(T_Y^{\gamma'})) \rho_{\gamma',N}$$
for all $\gamma,\gamma' \in \Gamma$, and also
$$ \theta_N \circ \Inc(T^\gamma_X) = (\rho_{\gamma,N} \circ \Inc(\pi)) \theta_N$$
for all $\gamma \in \Gamma$, where $\pi \colon X \to Y$ is the projection map.

Finally, we need to eliminate the quotienting by $K_N$.  Here it is convenient to return to the canonical model $\Stone(X)$ of $X$ rather than the Koopman model $\hat X$ as it allows us to avoid having to handle exceptional null sets.  If we let $\tilde \rho_{\gamma,N} \colon \Stone(Y) \to K/K_N$ be the canonical representative of $\rho_{\gamma,N}$ given by Theorem \ref{canon}(iv), and similarly let $\tilde \theta_N \colon \Stone(X) \to K/K_N$ be the canonical representative of $\theta_N$, then $\tilde \rho_{\gamma,N}$ is a $K/K_N$-valued $\CHProb_\Gamma$-cocycle on $\Stone(Y)$ with
$$ \tilde \theta_N(k^\op x) = \tilde \theta_N(x) k$$
for all $x \in \Stone(X)$ and $k \in K$, and
$$ \tilde \theta_N( T^\gamma_{\Stone(X)} x ) = \tilde \rho_{\gamma,N}(\Stone(\pi)(x)) \tilde \theta_N(x)$$
for all $x \in \Stone(X)$ and $\gamma \in \Gamma$.  Also from construction we see that whenever $N \leq N'$ both lie in ${\mathcal N}$, then the projection of $\hat \theta_{N'}$ to $K/K_N$ agrees almost everywhere with $\hat \theta_N$, hence on casting to $\ProbAlg$ and applying $\Stone$ we see that the projection of $\tilde \theta_{N'}$ to $K/K_N$ agrees everywhere with $\tilde \theta_N$.  Similarly the projection of $\tilde \rho_{\gamma,N'}$ to $K/K_N$ agrees everywhere with $\tilde \rho_{\gamma,N}$.  Since $\bigcap_{N \in {\mathcal N}} K_N = \{1\}$ (and also ${\mathcal N}$ is closed under finite intersections), one can then pass to an inverse limit and obtain a unique $\CH$-morphism $\tilde \theta \colon \Stone(X) \to K$ and a unique $K$-valued $\CHProb_\Gamma$-cocycle $\tilde \rho$ on $\Stone(Y)$ such that
\begin{equation}\label{thx}
 \tilde \theta(k^\op x) = \tilde \theta(x) k
 \end{equation}
for all $x \in \Stone(X)$ and $k \in K$, and
$$ \tilde \theta( T^\gamma_{\Stone(X)} x ) = \tilde \rho_{\gamma}(\Stone(\pi)(x)) \tilde \theta_N(x)$$
for all $x \in \Stone(X)$ and $\gamma \in \Gamma$.  Indeed each $\tilde \theta_N$ becomes the projection of $\tilde \theta$ to $K/K_N$, and similarly for the $\tilde \rho_{\gamma,N}$.

Let $\theta \in \Cond(K)$ be the abstraction of $\tilde \theta$.  We now claim that $\theta, \Inc(\pi)$ generate the $\sigma$-complete Boolean algebra $\Inc(X)_\Bool$, thus making $(X,\pi,\theta,\rho)$ a $\ProbAlgG$-group extension of $Y$ by $K$.  To see this, we return to the Koopman model $\hat X$ and observe from the Stone--Weierstra{\ss} theorem that finite linear combinations of functions of the form
$$ x \mapsto f(\hat \pi(x)) g(\theta_N(x))$$
for $f \in C(\Stone(Y))$, $N \in {\mathcal N}$, and $g \in C(K/K_N)$, form a unital algebra in $C(\hat X)$ that separates points, and is hence dense in $C(\hat X)$.  All of these functions, when casted to $\ProbAlg$, are measurable with respect to the $\sigma$-complete Boolean algebra generated by $\theta, \Inc(\pi)$, and hence $L^\infty$ of this algebra is dense (in $L^2(\hat X)$) in $C(\hat X)$, and hence in $L^2(\hat X) \equiv L^2(X)$, giving the claim.

To conclude, it suffices by Proposition \ref{skew-crit} to verify the identity
\begin{equation}\label{verify}
 \int_X (\pi^* f) (g \circ \theta)\ d\mu_X = \left( \int_Y f\ d\mu_Y \right) \left( \int_K g\ d\Haar_K \right)
 \end{equation}
 whenever $f \in L^\infty(Y)$ and $g \in C(K)$.  When $g=1$ the claim is clear, so we may assume $g$ has mean zero, so that the right-hand side of \eqref{verify} vanishes.  Using the $K^\op$-invariance of the measure $\mu_X$ and \eqref{thx} we can write the left-hand side as
 $$ \int_K \left( \int_X (\pi^* f) (k^\op g \circ \theta)\ d\mu_X\right)\ d\Haar_K(k)$$
 where $k^\op g(k_0) := g(k_0 k)$.  But $\int_K k^\op g \circ \theta\ d\Haar_K(k)$ vanishes, and the claim now follows from Fubini's theorem.
\end{proof}

\begin{remark} If one drops the requirement that the action of $K^\op$ be faithful in Theorem \ref{cocycle-free-thm}, then the same arguments give a variant of the conclusion in which $Y \rtimes_\rho K$ is replaced by $Y \rtimes_\rho K/N$ for some compact normal subgroup $N$ of $K$ and some $K/N$-valued $\ProbAlgG$-cocycle $\rho$.  Indeed one can just take $N$ to be the kernel of the action, and then one can apply Theorem \ref{cocycle-free-thm} to the quotiented action.
\end{remark}

\appendix 

\section{The equivalence of the uncountable topological and ergodic-theoretic Mackey--Zimmer theorems} \label{appendix}

In this appendix, we prove the equivalence of Theorem \ref{mackey-uncountable}(i) and the uncountable topological  Mackey--Zimmer theorem established  in \cite[\S 4]{ellis} by Ellis.  
First we extract from \cite[\S 4]{ellis} a suitable version and translate this statement to our language. 

\begin{theorem}[Ellis topological Mackey--Zimmer theorem]\label{thm-ellis}
Let $\Gamma$ be a discrete group, $K$ a compact Hausdorff group and $(Y,\mu_Y,T_Y)$ be an ergodic $\ProbAlgG$-system.  
Let $(X,R_X)$ be a $\CH_K$-promotion and $(X,T_X)$ be a $\CH_\Gamma$-promotion of a $\CH$-space $X$ respectively satisfying the following properties:
\begin{itemize}
    \item[(i)] $R_X$ acts freely on $X$.  
    \item[(ii)] $T^\gamma_X \circ R^k_X = R^k_X \circ T^\gamma_X$ for all $k\in K$,  $\gamma\in \Gamma$. 
    \item[(iii)] The orbit space\footnote{The orbit space of $(X,R_X)$ is the $\CH$-space $X/K$ resulting from the orbit equivalence relation $x\sim x'$ whenever there is $k\in K$ with $x=R^k_X(x')$. We denote by $\Pi:X\to X/K$ the canonical $\CH$-epimorphism. Note that the $\Gamma$-action $T_X$ naturally restricts   to a $\Gamma$-action $T_{X/K}$ on $X/K$ by (ii).} $(X/K,T_{X/K})$ is $\CH_\Gamma$-isomorphic to  $(\Stone(Y),T_{\Stone(Y)})$.  
\end{itemize}
Suppose $\Pi:(X,\mu_X,T_X)\to (\Stone(Y),\mu_{\Stone(Y)},T_{\Stone(Y)})$ is an extension of ergodic  $\CHProb_\Gamma$-systems. 
Then there exist a closed subgroup $H$ of $K$ and a cocycle $\rho=(\rho_\gamma)_{\gamma\in\Gamma}$,  $\rho_\gamma\in \CondTilde_{Y}(H)$ 
such that:
\begin{itemize}
    \item[(i)] The canonical map $\Pi:X/H\to \Stone(Y)$ is a $\CH_\Gamma$-isomorphism, and there is no proper closed subgroup $H'\leq H$ such that $\Pi:X/H'\to \Stone(Y)$ is a $\CH_\Gamma$-isomorphism.  
    \item[(ii)] $(X,T_X)$ is $\CH_\Gamma$-isomorphic to $(\Stone(Y)\rtimes_\rho K,T_{\Stone(Y)\rtimes_\rho K})$ where $$T_{\Stone(Y)\rtimes_\rho K}^\gamma(y,k):=(T_{\Stone(Y)}(y),\rho_\gamma(y)k).$$
    \item[(iii)] $\mu_X$ is the natural lift of the product measure $\mu_{\Stone(Y)}\times \Haar_H$ from $\Stone(Y)\times^\CH H$ to $X$.  
\end{itemize}
\end{theorem}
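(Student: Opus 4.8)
The plan is to deduce Theorem \ref{thm-ellis} from the already-established ergodic-theoretic statement, Theorem \ref{mackey-uncountable}(i); this is one of the two directions of the claimed equivalence. First I would exploit the extremal disconnectedness of $\Stone(Y)$ (Lemma \ref{surj}(ii)) and the Gleason projectivity established in its proof to produce a continuous section $s \colon \Stone(Y) \to X$ of the surjective $\CH$-morphism $\Pi$. Since $R_X$ acts freely with orbit space $\Stone(Y)$, the map $(y,k) \mapsto R^k_X(s(y))$ is a continuous bijection from the compact space $\Stone(Y) \times^\CH K$ onto the Hausdorff space $X$, hence a homeomorphism, so the principal $K$-bundle $\Pi$ is trivial. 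Using freeness I would then define a $\CH$-morphism $\theta \colon X \to K$ by $x = R^{\theta(x)}_X s(\Pi(x))$ and a cocycle $\rho = (\rho_\gamma)_{\gamma \in \Gamma}$, $\rho_\gamma \in \CondTilde_Y(K)$, by $T^\gamma_X s(y) = R^{\rho_\gamma(y)}_X s(T^\gamma_{\Stone(Y)} y)$; the commutation hypothesis (ii) and freeness yield the cocycle equation \eqref{cocycle-eq} (read on $\Stone(Y)$) and the intertwining relation $\theta(T^\gamma_X x) = \rho_\gamma(\Pi(x))\theta(x)$. Thus $\phi := (\Pi,\theta)$ is a $\CH_\Gamma$-isomorphism of $(X,T_X)$ onto $\Stone(Y)\rtimes_\rho K$, with $\rho$ a priori valued in all of $K$.

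Next I would transport the ergodic measure: pushing $\mu_X$ forward along $\phi$ and casting to $\ProbAlgG$ exhibits $X$ as an ergodic $\ProbAlgG$-group extension of $Y$ by $K$, with vertical coordinate $\theta$ and cocycle $\rho$ (here $\Pi$ and $\theta$ jointly generate by Stone--Weierstra{\ss}). Theorem \ref{mackey-uncountable}(i), together with the straightening in its proof, then supplies a closed subgroup $H \le K$, an element $\Phi \in \Cond_Y(K)$, and an $H$-valued cocycle $\rho_{*,\gamma} := (\Phi \circ T^\gamma_Y)\rho_\gamma \Phi^{-1}$ cohomologous to $\rho$. The crucial point is that via the canonical representation \eqref{rep}, $\Phi$ is realized by an honest $\CH$-morphism $\tilde\Phi \colon \Stone(Y) \to K$; hence the coordinate change $\theta_* := (\tilde\Phi \circ \Pi)\theta$ and the passage from $\rho$ to $\rho_*$ stay continuous. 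Composing $\phi$ with the homeomorphism $(y,k)\mapsto (y,\tilde\Phi(y)k)$ of $\Stone(Y)\times^\CH K$ preserves the $\CH_\Gamma$-isomorphism, now onto $\Stone(Y)\rtimes_{\rho_*}K$ with $\rho_*$ valued in $H$; this establishes conclusion (ii).

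For the measure identification (conclusion (iii)), I would feed the straightened data into the Fubini--Tonelli computation of Lemma \ref{integral-comp} and the skew-product criterion Proposition \ref{skew-crit}: these show that $(\Pi,\theta_*)$ pushes $\mu_X$ forward to $\mu_{\Stone(Y)} \times \Haar_H$, carried by the $H$-sub-bundle $\phi_*^{-1}(\Stone(Y)\times^\CH H)$, so $\mu_X$ is the lift of $\mu_{\Stone(Y)} \times \Haar_H$ as claimed. For the minimality clause (conclusion (i)), $H$ is the Mackey range, characterized in the proof of Theorem \ref{mackey-uncountable}(i) through the identification $\Inv(Y\rtimes_\rho K) \equiv H\backslash K$ of Lemma \ref{piw}. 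Its minimality follows from the ergodicity of $\mu_X$: a reduction of the structure group to a proper closed $H' \subsetneq H$ would force the fibrewise measure $\Haar_H$ to concentrate on a single $H'$-coset, which is impossible for (translation-invariant) Haar measure. I would note that this clause must be read at the level of the $\mu_X$-support, i.e.\ as a statement about the $H$-sub-bundle rather than the ambient free $K$-space.

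I expect the main obstacle to be the bookkeeping that keeps every change of variables continuous rather than merely measurable, since Ellis's formulation demands a genuine $\CH_\Gamma$-isomorphism whereas Theorem \ref{mackey-uncountable}(i) returns only a $\ProbAlgG$-isomorphism. This is precisely where the canonical model pays off: the straightening datum $\Phi$ and the reduced cocycle $\rho_*$ all carry canonical $\CH$-representatives through \eqref{rep}, so the abstract isomorphism can be promoted to the topological one. A secondary delicate point is pinning down the correct interpretation of the minimality statement in (i), which should be phrased via the support of $\mu_X$ and the minimality of the Mackey range.
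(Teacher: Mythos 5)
Your proposal is correct in outline but takes a genuinely different route from the paper. The paper proves Theorem \ref{thm-ellis} essentially by citation: it invokes Ellis's own results (\cite[Theorems 4.9, 4.15, 4.16]{ellis}) to obtain the continuous trivialization, the minimal cohomologous cocycle valued in $H$, and the identification of $\mu_X$ with the lift of $\mu_{\Stone(Y)}\times \Haar_H$, this last step resting on the Choquet-theoretic results of Keynes--Newton. You instead derive the theorem from Theorem \ref{mackey-uncountable}(i): a Gleason section $s$ of $\Pi$ (available since $\Stone(Y)$ is projective in $\CH$ by Lemma \ref{surj}(ii)) trivializes the free $K$-space and yields a continuous cocycle, after which the ergodic-theoretic Mackey--Zimmer theorem, the continuity of the canonical representative $\tilde\Phi$ of the straightening datum $\Phi$, and Lemma \ref{integral-comp} together with Proposition \ref{skew-crit} do the rest. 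This is close in spirit to the converse direction of the equivalence Proposition that follows Theorem \ref{thm-ellis} in the appendix, except that there the authors still lean on \cite[Theorem 4.9]{ellis} for the trivialization and on \cite[Theorem 4.12]{ellis} for the irreducibility clause, whereas you replace the former by the Gleason-section argument and the latter (and Keynes--Newton) by the paper's internal Fubini--Haar machinery. Your route buys self-containedness and no circularity (Theorem \ref{mackey-uncountable}(i) is proved independently of Theorem \ref{thm-ellis}); the paper's buys brevity and a faithful record of what Ellis actually proved.

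Two caveats. First, the continuity of $(y,k)\mapsto R_X^k(s(y))$ --- hence the homeomorphism onto $\Stone(Y)\times^{\CH} K$ and the continuity of $\theta$ and of the $\rho_\gamma$ --- requires the action map $K\times X\to X$ to be \emph{jointly} continuous; a bare $\CH_K$-promotion in the sense of Definition \ref{dynamic} only makes each individual $R^k_X$ a homeomorphism. Joint continuity is a standing hypothesis in Ellis's framework (and is implicitly needed even for $X/K$ to be a $\CH$-space), so you should state it explicitly rather than assume it is available. Second, your left/right bookkeeping needs care: with $R_X$ a left $K$-action and $\theta$ defined by $x=R^{\theta(x)}_X s(\Pi(x))$, the intertwining comes out as $\theta(T^\gamma_X x)=\theta(x)\rho_\gamma(\Pi(x))$ and the cocycle identity is the one for the opposite group, so one must either insert an inverse in the definition of $\theta$ or work with $K^{\op}$ throughout to land on the conventions of \eqref{cocycle-eq} and \eqref{theta-gam-concrete}. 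Your decision to read conclusion (i) at the level of the support of $\mu_X$ is the right one, and is forced if the quotients $X/H$ and $X/H'$ are to make the statement non-vacuous.
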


\begin{proof}
First use \cite[Theorem 4.9]{ellis} to find a continuous cocycle $\rho'=(\rho'_\gamma)_{\gamma\in\Gamma}$ on $\Stone(Y)$ with values in $K$ such that $(X,T_X)$ is $\CH_\Gamma$-isomorphic to $(\Stone(Y)\rtimes_{\rho'}K, T_{\Stone(Y)\rtimes_{\rho'}K})$ where $T^\gamma_{\Stone(Y)\rtimes_{\rho'}K}(y,k)=(T_{\Stone(Y)}(y), \rho'_\gamma(y) k)$. 
Second apply \cite[Theorem 4.15]{ellis} in order to extract a minimal cohomologous cocycle $\rho=(\rho_\gamma)_{\gamma\in\Gamma}$ on $\Stone(Y)$ with values in $H$ (where $H$ satisfies the irreducibility property in part (i) of the conclusions of Theorem \ref{thm-ellis}), which yields (ii).  
Now we lift $\mu_{\Stone(Y)}\times \Haar_H$ to $\Stone(Y)\times^\CH K$ by restriction, and adopting a similar argument to the one in the beginning of the proof of \cite[Theorem 4.16]{ellis} (which in turn relies on non-trivial Choquet theoretic results developed in \cite{keynes-newton}), we can identify $\mu_X$ with this ergodic lift, giving (iii). 
\end{proof}

\begin{proposition}
Theorem \ref{thm-ellis} is equivalent to part (i) of Theorem \ref{mackey-uncountable}.   
\end{proposition}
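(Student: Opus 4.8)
The plan is to establish the two implications separately, using the canonical model functor $\Stone$ to move between the abstract and topological pictures, and exploiting the projectivity of $\Stone(Y)$ in $\CH$ (Lemma \ref{surj}(ii) together with Gleason \cite{gleason}) to trivialize the relevant principal bundle.

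For the implication Theorem \ref{mackey-uncountable}(i) $\Rightarrow$ Theorem \ref{thm-ellis}, I would start from Ellis's topological data. Since $\Pi \colon X \to X/K \equiv \Stone(Y)$ is a surjective $\CH$-morphism onto a projective object, it admits a continuous section $s \colon \Stone(Y) \to X$. Because $R_X$ acts freely and transitively on the fibers of $\Pi$ (which are exactly the $K$-orbits), the map $(y,k) \mapsto R^k_X s(y)$ is a continuous bijection between the compact Hausdorff spaces $\Stone(Y) \times^\CH K$ and $X$, hence a homeomorphism; its inverse supplies a continuous vertical coordinate $\tilde\theta \colon X \to K$ and, through $T^\gamma_X s(y) = R^{\tilde\rho_\gamma(y)}_X s(T^\gamma_{\Stone(Y)} y)$, a $K$-valued $\CHProb_\Gamma$-cocycle $\tilde\rho$ on $\Stone(Y)$. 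This already exhibits $(X,T_X)$ as $\CH_\Gamma$-isomorphic to $\Stone(Y) \rtimes_{\tilde\rho} K$. Casting $\tilde\theta,\tilde\rho$ to $\ProbAlgG$ via \eqref{rep} (and noting that $\Pi,\tilde\theta$ generate by Stone--Weierstra{\ss}, since $(\Pi,\tilde\theta)$ is injective) gives an ergodic $\ProbAlgG$-group extension $(X_{\ProbAlgG},\pi,\theta,\rho)$ of $Y$ by $K$. Applying part (i), and recalling that its proof (Section \ref{fourth-step}) produces an $H$-valued cocycle $\rho_*$ explicitly cohomologous to $\rho$ via some $\Phi \in \Cond_Y(K)$, I would use $\Phi$ to change the section $s$ and thereby replace $\tilde\rho$ by a trivializing cocycle valued in $H$, giving conclusion (ii); the $\ProbAlgG$-isomorphism $X_{\ProbAlgG} \equiv Y \rtimes_{\rho_*} H$ then identifies $\mu_X$ with the lift of $\mu_{\Stone(Y)} \times \Haar_H$, giving (iii).

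For the converse Theorem \ref{thm-ellis} $\Rightarrow$ Theorem \ref{mackey-uncountable}(i), I would take an ergodic $\ProbAlgG$-group extension $(X,\pi,\theta,\rho)$ of $Y$ by $K$ with canonical model $\Stone(X)$, continuous vertical coordinate $\tilde\theta$, and continuous cocycle $\tilde\rho$ on $\Stone(Y)$. Form the topological skew-product $Z := \Stone(Y) \rtimes_{\tilde\rho} K$, a $\CH$-space carrying the free $K$-action $R^{k_0}(y,k) := (y,kk_0)$, a commuting $\Gamma$-action, and orbit space $Z/K \equiv \Stone(Y)$; thus $Z$ meets hypotheses (i)--(iii) of Ellis's theorem. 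The map $\tilde\phi \colon \Stone(X) \to Z$, $\tilde\phi(x) := (\Stone(\pi)(x),\tilde\theta(x))$, is a $\Gamma$-equivariant $\CH$-morphism by \eqref{theta-gam-abstract}, so its pushforward $\nu := \tilde\phi_* \mu_{\Stone(X)}$ is a $\Gamma$-invariant, $\Gamma$-ergodic Baire--Radon measure on $Z$ with $\Pi_*\nu = \mu_{\Stone(Y)}$; hence $\Pi \colon (Z,\nu,T_Z) \to (\Stone(Y),\mu_{\Stone(Y)},T_{\Stone(Y)})$ is an ergodic $\CHProb_\Gamma$-extension. Applying Theorem \ref{thm-ellis} yields a closed subgroup $H$ and an $H$-valued cocycle $\rho'$ realizing $\nu$ as the lift of $\mu_{\Stone(Y)} \times \Haar_H$, so that $(Z,\nu)_{\ProbAlgG} \equiv Y \rtimes_{\rho'} H$. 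Finally, casting $\tilde\phi$ to $\ProbAlgG$ gives a measure-preserving morphism $X \to (Z,\nu)_{\ProbAlgG}$; it is injective on Boolean algebras by measure preservation, and surjective because the range of its pullback contains that of $\pi$ and of $\theta$, which generate $\Inc(X)_\SigmaAlg$ (exactly as in the final step of Proposition \ref{skew-crit}). Thus it is a $\ProbAlgG$-isomorphism, establishing part (i).

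The main obstacle is reconciling the two formulations of the conclusion, and in particular extracting Ellis's irreducibility clause from the bare abstract statement. Ellis expresses the answer as a full $K$-skew-product carrying a measure concentrated on an $H$-sub-bundle, together with the assertion that no proper closed $H' \le H$ also makes $\Pi \colon X/H' \to \Stone(Y)$ a $\CH_\Gamma$-isomorphism, whereas part (i) returns only a $\ProbAlgG$-isomorphism onto $Y \rtimes_{\rho_*} H$ with no minimality claimed. Deriving Ellis's conclusion (i) therefore requires the extra input that the subgroup produced is the Mackey range and hence minimal among closed subgroups into which $\rho$ is cohomologous; I expect to supply this by a Zorn's lemma argument, using compactness of $K$ and the observation that $X/H' \equiv \Stone(Y)$ forces a reduction of the cocycle to $H'$. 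The remaining matching of measures in (iii) — the passage between ``$\mu_X$ on $\Stone(Y)\rtimes_\rho K$ supported on $\Stone(Y)\times^\CH H$'' and ``$Y \rtimes_{\rho_*} H$ in $\ProbAlgG$'' — is routine given Lemma \ref{integral-comp} and Proposition \ref{skew-crit}, but must be carried out with care regarding the left/right coset conventions.
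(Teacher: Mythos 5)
Your two implications follow the same skeleton as the paper's proof. In the direction Theorem \ref{thm-ellis} $\Rightarrow$ Theorem \ref{mackey-uncountable}(i) you pass to canonical models, push $\mu_{\Stone(X)}$ forward along $\tilde\phi(x)=(\Stone(\pi)(x),\tilde\theta(x))$ onto $\Stone(Y)\rtimes_{\tilde\rho}K$, verify Ellis's hypotheses for the resulting $\CHProb_\Gamma$-extension, and upgrade the $\ProbAlgG$-morphism to an isomorphism exactly as in Proposition \ref{skew-crit} --- this is essentially the paper's argument. In the converse direction you diverge in one step: where the paper invokes \cite[Theorem 4.9]{ellis} as a black box to trivialize $(X,T_X)$ as a topological skew-product, you re-derive the trivialization from scratch via a Gleason section $s\colon\Stone(Y)\to X$ of $\Pi$ and the free, fiberwise-transitive $K$-action. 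This is a legitimate and more self-contained route (it is close to how Ellis's Theorem 4.9 is proved in the first place), though it quietly uses joint continuity of the $K$-action on $X$ to conclude that $(y,k)\mapsto R^k_X s(y)$ is continuous; the bare notion of a $\CH_K$-promotion in Definition \ref{dynamic} does not supply this, so it must be read into Ellis's hypotheses.

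The one genuine gap is Ellis's irreducibility clause, conclusion (i) of Theorem \ref{thm-ellis}, which you correctly identify as the obstacle but only promise to handle ``by a Zorn's lemma argument''. The paper disposes of it by citing the equivalences of \cite[Theorem 4.12]{ellis}. If you want to avoid that citation, no Zorn's lemma is needed: the subgroup $H$ produced by Theorem \ref{mackey-uncountable}(i) is the Mackey range, i.e.\ $\Inv(Y\rtimes_\rho K)\equiv H\backslash K$ as $\ProbAlg_{K^\op}$-systems, and if $\Pi\colon X/H'\to\Stone(Y)$ were a $\CH_\Gamma$-isomorphism for some proper closed $H'\leq H$ then the cocycle would reduce to $H'$, so $L^2(H'\backslash K)$ would embed into $L^2(\Inv(Y\rtimes_\rho K))=L^2(H\backslash K)$, forcing $H\leq H'$ and hence $H'=H$. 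As written, your proposal does not actually supply this step. The remaining point you flag --- matching the concrete Baire--Radon measure $\mu_X$ with the lift of $\mu_{\Stone(Y)}\times\Haar_H$, rather than merely the measure algebras --- is indeed routine via the Riesz representation theorem on the topological model together with Lemma \ref{integral-comp} and Proposition \ref{skew-crit}, once the left/right coset conventions are fixed.
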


\begin{proof}
We show how \ref{thm-ellis} implies Theorem \ref{mackey-uncountable}(i). 
Let $Y = (Y_\ProbAlg, T_Y)$ be a $\ProbAlgG$-system, and let $K$ be a compact Hausdorff group. Let $(X, \pi, \theta, \rho)$ be an ergodic $\ProbAlgG$-group extension of $Y$ by $K$. 
We apply the canonical model functor $\Stone$ to obtain the $\CHProb_\Gamma$-extension $(\Stone(X),\Stone(\pi),\tilde{\theta},\tilde{\rho})$ of $\Stone(Y)$ by $K$, where $\tilde{\rho}$ is the functorial concrete representation of $\rho$ as a $K$-valued $\CHProb_\Gamma$-cocycle on $\Stone(Y)$, and $\tilde{\theta}\in \CondTilde_X(K)$ is the continuous representative of $\theta\in \Cond_X(K)$. 
As in the proof of Proposition \ref{skew-crit}, we let $\tilde{\phi}:\Stone(X)\to \Stone(Y)\times^\CH K$ be the $\CH$-morphism defined by $\tilde{\phi}(\tilde{x}):=(\Stone(\pi)(\tilde{x}),\tilde{\theta}(\tilde{x}))$ for $\tilde{x}\in \Stone(X)_\Set$. From $\tilde{\theta}\circ T^\gamma_{\Stone(X)}=(\tilde{\rho}_\gamma\circ \Stone(\pi))\tilde{\theta}$ (which results from \eqref{theta-gam-abstract} after applying the concrete model functor), we see that $\tilde{\phi}$ can be promoted to a $\CH_\Gamma$-morphism of extensions from $(\Stone(X),\Stone(\pi))$ to $(\Stone(Y)\rtimes_{\tilde{\rho}} K, \pi_{\Stone(Y)})$ where $\pi_{\Stone(Y)}:\Stone(Y)\times K\to \Stone(Y)$ is the canonical $\CH_\Gamma$-morphism.  We can use $\tilde{\phi}$ to pushforward $\mu_{\Stone(X)}$ from $\Stone(X)$ to $\Stone(Y)\times^\CH K$. Thus $\tilde{\phi}$ can be promoted to a $\CHProb_\Gamma$-morphism of extensions from $(\Stone(X),\Stone(\pi))$ to $(\Stone(Y)\rtimes_{\tilde{\rho}} K, \pi_{\Stone(Y)})$ if we equip $\Stone(Y)\rtimes_{\tilde{\rho}} K$ with the pushforward probability measure $\mu_{\Stone(X)}':=\tilde{\phi}_*\mu_{\Stone(X)}$.  
Arguing similarly to the last part of the proof of Proposition \ref{skew-crit}, one can show that the  $\ProbAlgG$-morphism of extensions from $(X,\pi)$ to $(\Stone(Y)\rtimes_{\tilde{\rho}} K, \pi_{\Stone(Y)})_{\ProbAlgG}$ is in fact a $\ProbAlgG$-isomorphism of extensions.  
Now it is not difficult to check that the $\CHProb_\Gamma$-extension $(\Stone(Y)\rtimes_{\tilde{\rho}} K, \pi_{\Stone(Y)})$ satisfies the assumptions in Theorem \ref{thm-ellis}. In particular, we find that $(X,\mu_X,T_X)$ is $\ProbAlgG$-isomorphic to $(\Stone(Y)\rtimes_{\rho'} H)_{\ProbAlgG}$ for some continuous cocycle $\rho'$ on $\Stone(Y)$ with values in $H$ that is cohomologous to $\tilde{\rho}$.   

Conversely, let $\Pi:(X,\mu_X,T_X)\to (\Stone(Y),\mu_{\Stone(Y)},T_{\Stone(Y)})$ be an extension of ergodic  $\CHProb_\Gamma$-systems (with the conventions as in Theorem \ref{thm-ellis}).  By \cite[Theorem 4.9]{ellis}, we can identify $(X,T_X)$ with $(\Stone(Y)\rtimes_\rho K,T_{\Stone(Y)\rtimes_\rho K})$ as $\CH_\Gamma$-systems where $\rho=(\rho_\gamma)_{\gamma\in\Gamma}$ is a continuous cocycle on $\Stone(Y)$ with values in $K$.  Now the cast of  $\Pi:(\Stone(Y)\rtimes_\rho K,\mu_X,T_{\Stone(Y)\rtimes_\rho K})\to (\Stone(Y),\mu_{\Stone(Y)},T_{\Stone(Y)})$ to $\ProbAlgG$ satisfies the assumptions of Theorem \ref{mackey-uncountable}(i). Passing to concrete models and applying Proposition \ref{skew-crit}, we get the conclusions (ii), (iii), while for the irreducibility conclusion (i) we also need the equivalences in \cite[Theorem 4.12]{ellis}. 
\end{proof}

\subsection*{Acknowledgements}
AJ was supported by DFG-research fellowship JA 2512/3-1. 
TT was supported by a Simons Investigator grant, the James and Carol Collins Chair, the Mathematical Analysis \& Application Research Fund Endowment, and by NSF grant DMS-1764034. We thank Balint Farkas and Markus Haase for helpful comments and references, and the anonymous referee for several useful corrections and suggestions.

\end{document}